\font\chuto=cmbx10 at 16pt \font\kamy=lcmssb8
\newtheorem{theorem}{Theorem}[section]
\newtheorem{proposition}[theorem]{Proposition}
\newtheorem{definition}[theorem]{Definition}
\newtheorem{corollary}[theorem]{Corollary}
\newtheorem{lemma}[theorem]{Lemma}
\newtheorem{example}[theorem]{Example}
\newtheorem{remark}[theorem]{Remark}
\newtheorem{thm}{Theorem}[section]
\numberwithin{equation}{section}
\begin{document}
\vskip1.5cm

\centerline {\bf \chuto Generalized Convex Functions and Their Applications  }

\vskip.2cm

\centerline {\bf \chuto  }


\vskip.8cm \centerline {\kamy  Adem Kili\c{c}man $^{a,}$\footnote{{\tt Corresponding author email: akilic@upm.edu.my  ( Adem Kili\c{c}man)}} and Wedad Saleh $^b$  }

\vskip.5cm
\centerline {$^a$  Institute for Mathematical Research, University Putra Malaysia, Malaysia}
\centerline {e-mail : {\tt akilic@upm.edu.my}}


\centerline {$^b$ Department of Mathematics, Putra University of Malaysia, Malaysia} \centerline {e-mail : {\tt wed\_10\_777@hotmail.com}}




\vskip.5cm \hskip-.5cm{\small{\bf Abstract :}  This study focuses on convex functions and their generalized. Thus, we start this study by giving the definition of convex functions and some of their properties  and discussing a simple geometric property. Then we generalize E-convex functions and establish  some their properties. Moreover, we give generalized $ s $-convex functions in the second sense and present some new inequalities of generalized Hermite-Hadamard type for the class of functions whose second local fractional derivatives of order $ \alpha $ in absolute value at certain powers are generalized $ s $-convex functions in the second sense. At the end, some examples that these inequalities are able to be applied to some special means are showed.



\hrulefill


\section{Introduction}
\hskip0.6cm
Let $ M\subseteq \mathbb{R} $ be an interval. A function $ \varphi:M\subseteq \mathbb{R}\longrightarrow \mathbb{R} $ is called a convex if for any $ y_{1}, y_{2}\in M $ and $ \eta\in [0,1] $,
\begin{equation}\label{c}
\varphi(\eta y_{1}+(1-\eta)y_{2})\leq \eta \varphi(y_{1})+(1-\eta)\varphi(y_{2}).
\end{equation}
If the inequality (\ref{c}) is the strict inequality, then $ \varphi $ is called a strict convex function.\\

From a geometrical point of view, a function $ \varphi $ is convex provided that the line segment connecting any two points of its graph lies on or above the graph. The function $ \varphi $ is strictly convex provided that the line segment connecting any two points of its graph lies above the graph. If $ -\varphi $ is convex (resp. strictly convex), then $ \varphi $ is called concave (resp. strictly concave).\\

The convexity of functions have been widely used in many branches of mathematics, for example in mathematical analysis, function theory, functional analysis, optimization theory and so on. For aproduction function $ x=\varphi(L) $, concacity of $ \varphi $ is expressed economically by saying that $ \varphi $ exhibits diminishing returns. While if $ \varphi $ is convex, then  it exhibits increasing returns.
Due to its applications and significant importance, the concept of convexity has been extended and generalized in several directions, see(\cite{kS,GrinalattLinnainmaa2011,RuelAyres1999}.

\vspace{4.0 mm}
 Recently, the fractal theory has received significantly remarkable attention from scientists and engineers. In the sense of Mandelbrot, a fractal set is the one whose Hausdorff dimension strictly exceeds the topological dimension\cite{Edgar1998,KolwankarGangal1999}. Many researchers studied the properties of functions on fractal space and constructed many kinds of fractional calculus by using different approaches \cite{kS,CarpinteriChiaiaCornetti2001,ZhaoChengYang2013}. Particularly, in \cite{Yang2012}, Yang stated the analysis of local fractional functions on fractal space systematically, which includes local fractional calculus and the monotonicity of function.\\

 Throughout this chapter $ \mathbb{R}^{\alpha} $ will be denoted a real linear fractal set.
 \begin{definition}\cite{HuixiaSuiDongyan2014}
 	A function $ \varphi\colon M\subset \mathbb{R}\longrightarrow \mathbb{R}^{\alpha} $ is called generalized convex if
 	\begin{equation}\label{GC1}
 	\varphi(\eta y_{1}+(1-\eta)y_{2})\leq \eta^{\alpha}\varphi(y_{1})+(1-\eta)^{\alpha}\varphi(y_{2})
 	\end{equation}
  \end{definition}
 	for all $ y_{1},y_{2}\in M $, $ \eta\in[0,1] $ and $ \alpha\in\left(\left. 0,1 \right]  \right.   $.

 \vspace{4.0 mm}
 It is called strictly generalized convex if the inequality (\ref{GC1}) holds strictly whenever $ y_{1} $ and $y_{2} $ are distinct points and $ \eta\in(0,1) $. If $ -\varphi $ is generalized convex (respectively, strictly generalized convex), then $ \varphi$ is generalized concave (respectively, strictly generalized concave).

 \vspace{4.0 mm}
 In $ \alpha=1 $, we have a convex function ,i.e, (\ref{c}) is obtained.

 \vspace{4.0 mm}
 Let $ f\in \,\  _{a_{1}}I^{(\alpha)}_{a_{2}} $ be a generalized convex function on $ [a_{1},a_{2}] $ with $ a_{1}< a_{2} $. Then,

 \begin{equation}\label{eq8}
 f\left( \frac{a_{1}+a_{2}}{2}\right) \leq \frac{\Gamma (1+\alpha)}{(a_{2}-a_{1})^{\alpha}}\,\ _{a_{1}}I^{(\alpha)}_{a_{2}}f(x)\leq \frac{f(a_{1})+f(a_{2})}{2^{\alpha}}.
 \end{equation}
 is known as generalized Hermite-Hadmard's inequality \cite{HuixiaSui2014}.
 Many authers paid attention to the study of generalized Hermite-Hadmard's inequality and generalized convex function, see \cite{AdemWedadNotions2015, Huixia2015Hermite}. If $ \alpha=1 $ in (\ref{eq8}), then \cite{DF}
 \begin{equation}\label{eq9}
 f\left( \frac{a_{1}+a_{2}}{2}\right) \leq\frac{1}{a_{2}-a_{1}}\int_{a_{1}}^{a_{2}}f(x)dx\leq\frac{f(a_{1})+f(a_{2})}{2},
 \end{equation}
 which is known as classical Hermite-Hadamard inequality, for more properties about this inequality we refer the interested readers to \cite{D,hua2014inequalities}.
\section{Generalized E-convex Functions}
\vspace{1.0mm}
In 1999, Youness \cite{Youness1999}  introduced E-convexity of sets and functions, which  have some important applications in various branches of mathematical sciences \cite{Abou1999inequalities,Noor1994fuzzy}. However, Yang \cite{Yang2001} showed that  some results given by Youness  \cite{Youness1999} seem to be incorrect. Chen \cite{Chen2002} extended E-convexity to a semi E-convexity and discussed some of its properties. For more results on E-convex function or semi E-convex function see  \cite{AW,FulgaPreda,Iqbal,IAA2012,SyauLee2005}.
\hskip0.6cm
\begin{definition}\cite{Youness1999}
	\begin{enumerate}
		\item[(i)] A set $ B\subseteq \mathbb{R}^{n} $ is called a E-convex  iff there exists  $ E\colon \mathbb{R}^{n}\longrightarrow \mathbb{R}^{n} $ such that $$ \eta E(r_{1})+(1-\eta)E(r_{2})\in B, \forall r_{1},r_{2}\in B,\eta\in[0,1] .$$
		\item[(ii)]A function $ g\colon \mathbb{R}^{n}\longrightarrow \mathbb{R} $ is called E-convex (ECF) on a set $ B \subseteq \mathbb{R}^{n} $ iff there exists  $ E\colon \mathbb{R}^{n}\longrightarrow \mathbb{R}^{n} $ and $$g(\eta E(r_{1})+(1-\eta) E(r_{2})\leq \eta g(E(r_{1}))+(1-\eta)g(E(r_{2})),\forall r_{1},r_{2}\in B, \eta\in[0,1].$$
	\end{enumerate}
\end{definition}
The following propositions were proved in \cite{Youness1999}:
\begin{proposition}\label{generalizedEconvex3}
	\begin{enumerate}
		\item[(i)] Suppose that a set $ B\subseteq \mathbb{R}^{n} $ is E-convex, then $ E(B)\subseteq B $.
		\item[(ii)] Assume that $ E(B) $ is convex and $ E(B)\subseteq B $, then $ B $ is E-convex.
	\end{enumerate}
\end{proposition}
\begin{definition}
	A function $ g\colon \mathbb{R}^{n}\longrightarrow \mathbb{R}^{\alpha} $ is called a generalized E-convex function (gECF) on a set $ B\subseteq \mathbb{R}^{n} $ iff there exists a map $ E\colon \mathbb{R}^{n}\longrightarrow \mathbb{R}^{n} $ such that $ B $ is an E-convex set and
	\begin{equation}\label{generalizedE-convex1}
	g(\eta E(r_{1})+(1-\eta)E(r_{2}))\leq \eta^{\alpha}g(E(r_{1}))+(1-\eta)^{\alpha}g(E(r_{2})),
	\end{equation}
	$\forall r_{1},r_{2}\in B, \eta\in(0,1) $ and $ \alpha\in\left( \left.0,1 \right] \right.  $
	On the other hand, if $$g(\eta E(x_{1})+(1-\eta)E(x_{2}))\geq \eta^{\alpha}g(E(x_{1}))+(1-\eta)^{\alpha}g(E(x_{1})),$$  $\forall x_{1},x_{2}\in B, \eta\in(0,1) $ and $ \alpha\in\left( \left.0,1 \right] \right.  $, then $ g $ is called generalized E-concave on $ B $. If the inequality sings in the previous two inequality are strict, then $ g $ is called generalized strictly E-convex and generalized strictly E-concave, respectively.
\end{definition}
\begin{proposition}
	\begin{enumerate}
		\item[(i)] Every  ECF  on a convex set $ B $ is gECF , where $ E=I $.
		\item[(ii)] If $ \alpha=1 $ in equation (\ref{generalizedE-convex1}), then $ g $ is called ECF on a set $ B $.
		\item[(iii)]If $ \alpha=1 $ and $ E=I $  in equation (\ref{generalizedE-convex1}), then g is called a convex function
	\end{enumerate}
\end{proposition}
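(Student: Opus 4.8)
The plan is to prove each of the three items directly from the defining inequality (\ref{generalizedE-convex1}) by specializing the map $E$ and the parameter $\alpha$, since all three are essentially definitional reductions. For item (i), I would start from an arbitrary E-convex function $g$ on a convex set $B$, which by definition satisfies
\begin{equation*}
g(\eta E(r_{1})+(1-\eta)E(r_{2}))\leq \eta g(E(r_{1}))+(1-\eta)g(E(r_{2}))
\end{equation*}
for all $r_{1},r_{2}\in B$ and $\eta\in[0,1]$. Setting $E=I$, the identity map, this becomes $g(\eta r_{1}+(1-\eta)r_{2})\leq \eta g(r_{1})+(1-\eta)g(r_{2})$. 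I then need to match this against the gECF inequality (\ref{generalizedE-convex1}) with $E=I$, namely $g(\eta r_{1}+(1-\eta)r_{2})\leq \eta^{\alpha}g(r_{1})+(1-\eta)^{\alpha}g(r_{2})$.

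The main obstacle, and the step I expect to require the most care, is the passage from the exponent $1$ to the exponent $\alpha$: since $\eta\in(0,1)$ and $\alpha\in(0,1]$ we have $\eta\le \eta^{\alpha}$ and $(1-\eta)\le(1-\eta)^{\alpha}$, so provided $g$ takes nonnegative values (or more generally the relevant terms are nonnegative on the fractal range $\mathbb{R}^{\alpha}$) one obtains $\eta g(r_{1})+(1-\eta)g(r_{2})\le \eta^{\alpha}g(r_{1})+(1-\eta)^{\alpha}g(r_{2})$, and chaining this with the E-convexity inequality yields (\ref{generalizedE-convex1}). I would make the sign hypothesis on $g$ explicit here, since without it the inequality $\eta\le\eta^{\alpha}$ cannot be pushed through the values of $g$. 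It also remains to check that the convexity of $B$ together with $E=I$ guarantees $B$ is an E-convex set, which is immediate because $\eta I(r_{1})+(1-\eta)I(r_{2})=\eta r_{1}+(1-\eta)r_{2}\in B$ by convexity, so the structural requirement in the definition of gECF is met.

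For items (ii) and (iii) no inequality manipulation is needed, as these are pure substitutions. For (ii) I would set $\alpha=1$ in (\ref{generalizedE-convex1}); then $\eta^{\alpha}=\eta$ and $(1-\eta)^{\alpha}=1-\eta$, so the inequality collapses exactly to the defining inequality of an E-convex function from the earlier definition, giving the claim. For (iii) I would then additionally set $E=I$, so that $g(\eta r_{1}+(1-\eta)r_{2})\leq \eta g(r_{1})+(1-\eta)g(r_{2})$, which is precisely inequality (\ref{c}) defining an ordinary convex function. The only subtlety worth a remark is that the domain of the $\eta$-parameter in (\ref{generalizedE-convex1}) is the open interval $(0,1)$, whereas the classical definition uses the closed interval $[0,1]$; I would note that the two endpoint cases $\eta=0$ and $\eta=1$ hold trivially, so the reduction to the classical notion is complete.
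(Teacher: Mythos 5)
Your proposal is correct, and it is worth noting that the paper offers no proof at all for this proposition---it is stated as an immediate definitional consequence---so your write-up is a filling-in rather than an alternative route. Items (ii) and (iii) are indeed pure substitutions, and your remark about the open versus closed $\eta$-interval is a harmless tidying-up. The substantive point is your treatment of item (i): you correctly identify that the passage from $\eta g(r_{1})+(1-\eta)g(r_{2})$ to $\eta^{\alpha}g(r_{1})+(1-\eta)^{\alpha}g(r_{2})$ via $\eta\leq\eta^{\alpha}$ and $(1-\eta)\leq(1-\eta)^{\alpha}$ requires $g$ to take nonnegative values, a hypothesis the paper's statement silently omits. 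This is not pedantry: for a constant negative function, say $g\equiv -c^{\alpha}$ with $c>0$, convexity holds with equality, while the gECF inequality would demand $-c^{\alpha}\leq-\left(\eta^{\alpha}+(1-\eta)^{\alpha}\right)c^{\alpha}$, which fails for $\alpha<1$ and $\eta\in(0,1)$ since $\eta^{\alpha}+(1-\eta)^{\alpha}>1$ there. So as printed, item (i) is false in general, and your explicit sign hypothesis is exactly the repair needed (and is consistent with the paper's own context, where, e.g., generalized $s$-convex functions in the second sense are shown to be nonnegative). Your proof with that hypothesis added is complete, including the easy check that $B$ convex and $E=I$ makes $B$ an E-convex set.
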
	
The following two examples show that generalized E-convex function which are not necessarily generalized convex.
\begin{example}
	Assume that $ B\subseteq \mathbb{R}^{2} $ is given as
	$$B=\left\lbrace (x_{1},x_{2})\in \mathbb{R}^{2}\colon\mu_{1}(0,0)+\mu_{2}(0,3)+\mu_{3}(2,1) \right\rbrace, $$
	with $ \mu_{i}>0 $,$\displaystyle \sum_{i=1}^{3}\mu_{i}=1 $ and define a map $ E\colon \mathbb{R}^{2}\longrightarrow \mathbb{R}^{2} $ such as $ E(x_{1},x_{2})=(0,x_{2}) $. The function $ g\colon \mathbb{R}^{2}\longrightarrow \mathbb{R}^{\alpha} $ defined by
	\begin{eqnarray*}
		g(x_{1},x_{2}) &=& \begin{cases}
			x_{1}^{3\alpha} ; x_{2}< 1,\\
			x_{1}^{\alpha}x_{2}^{3\alpha}; x_{2}\geq 1
		\end{cases}
	\end{eqnarray*}
	The function $ g $ is gECF on B, but is not generalized convex.
\end{example}

\begin{remark}
	If $ \alpha\longrightarrow 0 $ in the above example, then $ g $ goes to generalized convex function.
\end{remark}
\begin{example}
	Assume that $ g\colon \mathbb{R}\longrightarrow \mathbb{R}^{\alpha} $ is defined as
	\begin{eqnarray*}
		g(r) &=& \begin{cases}
			1^{\alpha} ; r> 0,\\
			(-r)^{\alpha}; r\leq 0
		\end{cases}
	\end{eqnarray*}
	and assume that $ E\colon \mathbb{R}\longrightarrow \mathbb{R} $ is defined as $ E(r)=-r^{2} $. Hence, $ \mathbb{R} $ is an E-convex set and $ g $ is gECF, but is not generalized convex.
\end{example}
\begin{thm}
	Assume that $ B\subseteq \mathbb{R}^{n}  $ is an E-convex set and $ g_{1}\colon B\longrightarrow \mathbb{R} $ is an ECF. If $ g_{2}\colon U\longrightarrow \mathbb{R}^{\alpha} $ is  non-decreasing generalized convex function such that the rang $ g_{1} \subset U $, then $ g_{2}og_{1} $ is a gECF on $ B $.
\end{thm}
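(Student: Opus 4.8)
The plan is to verify the defining inequality for a gECF directly, using the same map $E$ that witnesses both the E-convexity of $B$ and the ECF property of $g_1$. Fix arbitrary points $r_1,r_2\in B$ and a scalar $\eta\in(0,1)$, and abbreviate $u_1=g_1(E(r_1))$ and $u_2=g_1(E(r_2))$; by the hypothesis $\mathrm{range}\,g_1\subset U$ both values lie in $U$. Because $B$ is E-convex, the point $\eta E(r_1)+(1-\eta)E(r_2)$ belongs to $B$, so $g_1$ may legitimately be evaluated there and every expression in the chain below is well defined.

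First I would invoke the E-convexity of $g_1$ to obtain
$$g_1\big(\eta E(r_1)+(1-\eta)E(r_2)\big)\leq \eta u_1+(1-\eta)u_2.$$
Since $U$ is an interval, the right-hand side again lies in $U$, the domain of $g_2$. Applying the non-decreasing function $g_2$ to both sides preserves the inequality and gives
$$g_2\big(g_1(\eta E(r_1)+(1-\eta)E(r_2))\big)\leq g_2\big(\eta u_1+(1-\eta)u_2\big).$$
Next I would apply the generalized convexity of $g_2$ to the convex combination $\eta u_1+(1-\eta)u_2$, yielding
$$g_2\big(\eta u_1+(1-\eta)u_2\big)\leq \eta^{\alpha}g_2(u_1)+(1-\eta)^{\alpha}g_2(u_2).$$

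Chaining the two displays and rewriting $u_i=g_1(E(r_i))$ produces exactly
$$(g_2\circ g_1)\big(\eta E(r_1)+(1-\eta)E(r_2)\big)\leq \eta^{\alpha}(g_2\circ g_1)(E(r_1))+(1-\eta)^{\alpha}(g_2\circ g_1)(E(r_2)),$$
which is precisely the gECF inequality for $g_2\circ g_1$ relative to the same $E$. Since $r_1,r_2$ and $\eta$ were arbitrary and $B$ is already assumed E-convex, this completes the argument.

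The proof is short, and its only genuine delicacy is the bookkeeping of domains: one must confirm that the intermediate point $\eta E(r_1)+(1-\eta)E(r_2)$ lies in $B$ so that $g_1$ is defined there, and that $\eta u_1+(1-\eta)u_2\in U$ so that $g_2$ is defined there. The first is supplied by the E-convexity of $B$, the second by $U$ being an interval together with $\mathrm{range}\,g_1\subset U$. The essential structural point, and the reason the monotonicity assumption on $g_2$ cannot be weakened, is that passing from the ECF inequality for $g_1$ to the middle display requires $g_2$ to be order-preserving; without this the composition step breaks down.
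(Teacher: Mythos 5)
Your proof is correct and follows essentially the same route as the paper's: apply the ECF inequality for $g_{1}$, then use the monotonicity of $g_{2}$ followed by its generalized convexity to obtain $\eta^{\alpha}(g_{2}\circ g_{1})(E(r_{1}))+(1-\eta)^{\alpha}(g_{2}\circ g_{1})(E(r_{2}))$. Your extra bookkeeping of domains (that $\eta E(r_{1})+(1-\eta)E(r_{2})\in B$ and $\eta u_{1}+(1-\eta)u_{2}\in U$) is a welcome clarification the paper leaves implicit, but it does not change the argument.
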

\begin{proof}
	Since $ g_{1} $ is ECF, then
	$$g_{1}(\eta E(r_{1})+(1-\eta)E(r_{2}))\leq \eta g_{1}(E(r_{1}))+(1-\eta)g_{1}(E(r_{2})), $$
	$ \forall r_{1},r_{2}\in B $ and $ \eta\in[0,1] $. Also, since $ g_{2} $ is  non-decreasing generalized convex function, then
	\begin{eqnarray}
	g_{2}og_{1}(\eta E(r_{1})+(1-\eta)E(r_{2}))&\leq& g_{2}\left[\eta g_{1}(E(r_{1}))+(1-\eta)g_{1}(E(r_{2})) \right]\nonumber\\&\leq& \eta^{\alpha}g_{2}\left(g_{1}(E(r_{1})) \right)+(1-\eta)^{\alpha} g_{2} \left(g_{1}(E(r_{2})) \right)\nonumber\\ &=& \eta^{\alpha}g_{2}og_{1}(E(r_{1}))+(1-\eta)^{\alpha} g_{2} og_{1}(E(r_{2}))\nonumber
	\end{eqnarray}
	which implies that $ g_{2}og_{1} $ is a gECF on $ B $. \\
	
	Similarly, $ g_{2}og_{1} $ is a strictly gECF if $ g_{2} $ is a strictly non-decreasing generalized convex function.
\end{proof}
\begin{thm}
	Assume that $ B\subseteq \mathbb{R}^{n}  $ is an E-convex set, and $ g_{i}\colon B\longrightarrow \mathbb{R}^{\alpha}, i=1,2,...,l $ are generalized E-convex function. Then, $$g=\sum_{i=1}^{l}k_{i}^{\alpha}g_{i}$$ is a generalized E-convex on $ B $ for all $ k_{i}^{\alpha}\in \mathbb{R}^{\alpha} $
\end{thm}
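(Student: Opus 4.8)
The plan is to argue directly from the defining inequality of generalized E-convexity, since what is really being asserted is that the collection of gECFs on a fixed E-convex set $B$ (with a common map $E$) is closed under non-negative $\mathbb{R}^{\alpha}$-linear combinations. First I would fix arbitrary $r_{1},r_{2}\in B$, $\eta\in(0,1)$, and $\alpha\in(0,1]$, and invoke inequality (\ref{generalizedE-convex1}) for each summand separately, so that
$$g_{i}(\eta E(r_{1})+(1-\eta)E(r_{2}))\leq \eta^{\alpha}g_{i}(E(r_{1}))+(1-\eta)^{\alpha}g_{i}(E(r_{2}))$$
holds for every $i=1,2,\dots,l$. Note that the argument on the left is legitimate because the E-convexity of $B$ gives $\eta E(r_{1})+(1-\eta)E(r_{2})\in B$ directly, so $g$ is defined there.

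Next I would scale the $i$-th inequality by $k_{i}^{\alpha}$ and sum the resulting $l$ inequalities. Since order in $\mathbb{R}^{\alpha}$ is preserved under multiplication by a non-negative fractal scalar, each scaled inequality keeps its direction, and distributing the sum across the two terms on the right yields
$$\sum_{i=1}^{l}k_{i}^{\alpha}g_{i}(\eta E(r_{1})+(1-\eta)E(r_{2}))\leq \eta^{\alpha}\sum_{i=1}^{l}k_{i}^{\alpha}g_{i}(E(r_{1}))+(1-\eta)^{\alpha}\sum_{i=1}^{l}k_{i}^{\alpha}g_{i}(E(r_{2})).$$
Identifying the three sums with $g$ evaluated at the corresponding points turns this into exactly the gECF inequality $g(\eta E(r_{1})+(1-\eta)E(r_{2}))\leq \eta^{\alpha}g(E(r_{1}))+(1-\eta)^{\alpha}g(E(r_{2}))$, which is the desired conclusion.

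The step I expect to require the most care is the hypothesis on the scalars. For the scaled inequalities to retain their direction one needs each $k_{i}^{\alpha}\geq 0$ (equivalently $k_{i}\geq 0$), so the clause ``for all $k_{i}^{\alpha}\in\mathbb{R}^{\alpha}$'' should be read with this non-negativity restriction; for negative scalars the inequality reverses and closure fails. Beyond this sign bookkeeping, the only thing to verify is that the local-fractional operations on the real linear fractal set $\mathbb{R}^{\alpha}$ behave as used above, namely that multiplication by $k_{i}^{\alpha}$ distributes over the finite sum and commutes with the factors $\eta^{\alpha}$ and $(1-\eta)^{\alpha}$; these are termwise properties of the arithmetic on $\mathbb{R}^{\alpha}$ and present no real difficulty. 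This completes the argument.
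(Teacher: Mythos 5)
Your proposal is correct and follows essentially the same route as the paper's own proof: apply the gECF inequality (\ref{generalizedE-convex1}) to each $g_{i}$, scale by $k_{i}^{\alpha}$, and sum. Your added observation that the scalars must satisfy $k_{i}^{\alpha}\geq 0^{\alpha}$ for the inequalities to keep their direction is a genuine point the paper silently glosses over (its clause ``for all $k_{i}^{\alpha}\in\mathbb{R}^{\alpha}$'' is too generous as stated), and your reading is the right repair.
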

\begin{proof}
	Since $ g_{i},i=1,2,...,l $ are gECF, then $$g_{i}(\eta E(r_{1})+(1-\eta)E(r_{2}))\leq \eta^{\alpha} g_{i}(E(r_{1}))+(1-\eta)^{\alpha}g_{i}(E(r_{2})), $$
	$ \forall r_{1},r_{2}\in B $ , $ \eta\in[0,1] $ and $ \alpha\in\left(\left.  0,1 \right]\right.   $.
	Then,
	\begin{eqnarray}&&
	\sum_{i=1}^{l}k_{i}^{\alpha}g_{i}(\eta E(r_{1})+(1-\eta)E(r_{2}))\nonumber\\&&\hspace{0.5in} \leq \eta^{\alpha}\sum_{i=1}^{l}k_{i}^{\alpha}g_{i}(E(r_{1}))+
	(1-\eta)^{\alpha}\sum_{i=1}^{l}k_{i}^{\alpha}g_{i}(E(r_{2}))\nonumber\\&&\hspace{0.5in}= \eta^{\alpha}g(E(r_{1}))+(1-\eta)^{\alpha}g(E(r_{2})).\nonumber
	\end{eqnarray}
	Thus, $ g $ is a gECF.
\end{proof}	
\begin{definition}
	Assume that $ B\subseteq \mathbb{R}^{n} $ is a convex set. A function $ g\colon B\longrightarrow \mathbb{R}^{\alpha} $ is called generalized quasi convex if $$g(\eta r_{1}+(1-\eta)r_{2})\leq \max\left\lbrace g(r_{1}), g(r_{2}) \right\rbrace, $$
	$ \forall r_{1},r_{2}\in B $ and $ \eta\in[0,1] $.
\end{definition}
\begin{definition}
	Assume that $ B\subseteq \mathbb{R}^{n} $ is an E-convex set. A function  $ g\colon B\longrightarrow \mathbb{R}^{\alpha} $ is called
	\begin{itemize}
		\item[(i)] Generalized E-quasiconvex function iff $$g(\eta E(r_{1})+(1-\eta)E(r_{2})\leq \max\left\lbrace g(E(r_{1})), E(g(r_{2})) \right\rbrace, $$
		$ \forall r_{1},r_{2}\in B $ and $ \eta\in[0,1] $.
		\item[(ii)]Strictly generalized E-quasiconcave function iff
		$$g(\eta E(r_{1})+(1-\eta)E(r_{2})> \min\left\lbrace g(E(r_{1})), E(g(r_{2})) \right\rbrace, $$
		$ \forall r_{1},r_{2}\in B $ and $ \eta\in[0,1] $.
	\end{itemize}
\end{definition}
\begin{thm}\label{generalizedEconvex2}
	Assume that $ B\subseteq \mathbb{R}^{n}  $ is an E-convex set, and $ g_{i}\colon B\longrightarrow \mathbb{R}^{\alpha}, i=1,2,...,l $ are gECF. Then,
	\begin{itemize}
		\item[(i)] The function $ g\colon B\longrightarrow \mathbb{R}^{\alpha} $ which is defined by $ g(r)=\sup_{i\in I}g_{i}(r),r\in B $ is a gECF on $ B $.
		\item[(ii)]If $ g_{i},i=1,2,...,l $  are generalized E-quasiconvex functions on $ B $, then the function $ g $ is a generalized E-quasiconvex function on $ B $.
	\end{itemize}
\end{thm}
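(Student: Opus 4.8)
The plan is to exploit the monotonicity, in the family $\{g_i\}$, of the right-hand sides appearing in the two defining inequalities, together with the fact that the pointwise supremum dominates each $g_i$. Since all the $g_i$ are assumed generalized E-convex with respect to the same map $E$ on the same E-convex set $B$, the set $B$ is already E-convex for the candidate function $g$, so I only need to verify the defining inequality for $g$ in each case; nothing about $E$ or $B$ has to be re-established.

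For part (i), I would fix $r_1,r_2\in B$, $\eta\in(0,1)$ and $\alpha\in(0,1]$. For each index $i$ the gECF hypothesis gives
\[
g_i(\eta E(r_1)+(1-\eta)E(r_2))\leq \eta^{\alpha}g_i(E(r_1))+(1-\eta)^{\alpha}g_i(E(r_2)).
\]
Because $g(r)=\sup_i g_i(r)$ dominates each $g_i$ pointwise and the coefficients $\eta^{\alpha},(1-\eta)^{\alpha}$ are nonnegative, the right-hand side is at most $\eta^{\alpha}g(E(r_1))+(1-\eta)^{\alpha}g(E(r_2))$. Thus every $g_i$ evaluated at the combination is bounded by this single quantity, and taking the supremum over $i$ on the left transfers the bound to $g$, which is precisely the gECF inequality for $g$. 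For part (ii) the structure is identical, with the maximum replacing the affine bound: the generalized E-quasiconvexity of each $g_i$ yields $g_i(\eta E(r_1)+(1-\eta)E(r_2))\leq \max\{g_i(E(r_1)),g_i(E(r_2))\}$, and since $g_i\leq g$ pointwise the right side is at most $\max\{g(E(r_1)),g(E(r_2))\}$; taking the supremum over $i$ again passes the inequality to $g$.

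The argument is essentially routine, and there is no genuine obstacle beyond this bookkeeping. The one point worth stating explicitly is that the inequalities survive the passage to the supremum precisely because, in each case, the right-hand side is monotone nondecreasing in the family $\{g_i\}$ and becomes independent of the index once each $g_i$ is replaced by its dominating $g$; in part (i) it is the nonnegativity of $\eta^{\alpha}$ and $(1-\eta)^{\alpha}$ that secures this monotonicity. I would also note, for the statement to make sense, that one tacitly assumes the supremum is finite at each point of $B$ so that $g$ genuinely maps $B$ into $\mathbb{R}^{\alpha}$.
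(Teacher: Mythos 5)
Your proposal is correct and takes essentially the same route as the paper's proof: bound each $g_i$ at the point $\eta E(r_1)+(1-\eta)E(r_2)$ by the gECF (resp.\ generalized E-quasiconvex) inequality, replace each $g_i$ on the right by the dominating supremum $g$ (the paper phrases this in part (ii) as interchanging $\sup$ and $\max$), and pass to the supremum over $i$ on the left. Your explicit remarks on the nonnegativity of $\eta^{\alpha},(1-\eta)^{\alpha}$ and the tacit finiteness of the supremum are sound refinements of the same bookkeeping, not a different argument.
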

\begin{proof}
	\begin{itemize}
		\item[(i)] Due to  $ g_{i}, i\in I $ be gECF on $ B $, then
		\begin{eqnarray}&&
		g(\eta E(r_{1})+(1-\eta)E(r_{2}))\nonumber\\&&\hspace{0.5in} = \sup_{i\in I}g_{i}(\eta E(r_{1})+(1-\eta)E(r_{2}))\nonumber\\&&\hspace{0.5in} \leq  \eta^{\alpha}\sup_{i\in I} g_{i}(E(r_{1}))+(1-\eta)^{\alpha}\sup_{i\in I} g_{i}(E(r_{2}))\nonumber\\&&\hspace{0.5in}= \eta^{\alpha}g(E(r_{1}))+(1-\eta)^{\alpha}g(E(r_{2})).\nonumber
		\end{eqnarray}
		Hence, $ g $ is a gECF on $ B $.
		\item[(ii)] Since $ g_{i} , i\in I $ are generalized E-quasiconvex functions on $ B $, then
		\begin{eqnarray}
		g(\eta E(x_{1})+(1-\eta)E(x_{2})) &=& \sup_{i\in I}g_{i}(\eta E(x_{1})+(1-\eta)E(x_{2}))\nonumber\\ &\leq& \sup_{i\in I} \max \left\lbrace g_{i}(E(x_{1})),g_{i}(E(x_{2})) \right\rbrace\nonumber\\ &=&   \max \left\lbrace\sup_{i\in I} g_{i}(E(x_{1})),\sup_{i\in I} g_{i}(E(x_{2})) \right\rbrace\nonumber\\ &=& \max \left\lbrace g(E(x_{1})), g(E(x_{2})) \right\rbrace.\nonumber
		\end{eqnarray}
		Hence, $ g $ is a generalized E-quasiconvex function on $ B $.
	\end{itemize}
\end{proof}
Considering $ B\subseteq \mathbb{R}^{n} $ is a nonempty E-convex set.  From Propostion\ref{generalizedEconvex3}(i), we get $ E(B)\subseteq B $.Hence, for any $ g\colon B\longrightarrow \mathbb{R}^{\alpha} $, the restriction $ \tilde{g}\colon E(B)\longrightarrow \mathbb{R}^{\alpha} $ of g to E(B)  defined by
$$\tilde{g}(\tilde{x}) =g(\tilde{x}),\forall\tilde{x}\in E(B)$$ is well defined.
\begin{thm}\label{generalizedEconvex4}
	Assume that $ B\subseteq \mathbb{R}^{n} $, and $ g\colon B\longrightarrow \mathbb{R}^{\alpha} $ is a generalized E-quasiconvex function on $ B $. Then, the restriction $ \tilde{g}\colon U\longrightarrow \mathbb{R}^{\alpha} $ of $ g $ to any nonempty convex subset $ U $ of $ E(B) $ is a generalized quasiconvex on $ U $.
\end{thm}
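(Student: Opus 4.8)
The plan is to unwind the two relevant definitions and reduce the E-quasiconvexity inequality on $B$ to the plain generalized quasiconvexity inequality on $U$ by exploiting the fact that every element of $U$ is an $E$-image. First I would fix two arbitrary points $\tilde{x}_{1},\tilde{x}_{2}\in U$ and a parameter $\eta\in[0,1]$. Since $U\subseteq E(B)$, there exist $x_{1},x_{2}\in B$ with $\tilde{x}_{1}=E(x_{1})$ and $\tilde{x}_{2}=E(x_{2})$; producing these preimages is the crucial bookkeeping step that lets me feed the points of $U$ into the hypothesis on $g$.

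Next I would observe that the convex combination $\eta\tilde{x}_{1}+(1-\eta)\tilde{x}_{2}$ equals $\eta E(x_{1})+(1-\eta)E(x_{2})$, and that convexity of $U$ guarantees this point lies in $U$, so that $\tilde{g}$ is genuinely defined there and agrees with $g$ by the definition of the restriction. Applying the generalized E-quasiconvexity of $g$ to $x_{1},x_{2}$ gives $g(\eta E(x_{1})+(1-\eta)E(x_{2}))\leq \max\{g(E(x_{1})),g(E(x_{2}))\}$; rewriting each $E(x_{i})$ as $\tilde{x}_{i}$ and replacing $g$ by $\tilde{g}$ on $U$ then yields exactly $\tilde{g}(\eta\tilde{x}_{1}+(1-\eta)\tilde{x}_{2})\leq \max\{\tilde{g}(\tilde{x}_{1}),\tilde{g}(\tilde{x}_{2})\}$, which is the defining inequality for generalized quasiconvexity on the convex set $U$.

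Since $\tilde{x}_{1},\tilde{x}_{2}\in U$ and $\eta\in[0,1]$ were arbitrary, this establishes the claim. There is essentially no analytic obstacle here; the only point requiring care is the logical one of realizing each point of $U$ as $E(x)$ for a suitable $x\in B$, and then checking that it is the convexity of $U$ (rather than merely the E-convexity of $B$) that makes the restricted domain closed under convex combinations, so that the inequality we derive is genuinely a statement about $\tilde{g}$ on $U$ and not merely about $g$ on $B$.
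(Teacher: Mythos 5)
Your proposal is correct and follows essentially the same route as the paper's own proof: pick preimages $x_{1}^{*},x_{2}^{*}\in B$ of the two points of $U\subseteq E(B)$, use convexity of $U$ to keep the combination $\eta E(x_{1}^{*})+(1-\eta)E(x_{2}^{*})$ inside the domain of $\tilde{g}$, apply the generalized E-quasiconvexity of $g$, and translate back through the restriction. No gaps; your explicit remark that it is the convexity of $U$ (not the E-convexity of $B$) doing the work is exactly the point implicit in the paper's argument.
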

\begin{proof}
	Assume that $ x_{1}, x_{2}\in U\subseteq E(B) $, then there exist $ x_{1}^{*},x_{2}^{*}\in B $ such that $ x_{1}=E(x_{1}^{*}) $ and $ x_{2}=E(x_{2}^{*}) $. Since $ U $ is a convex set, we have $$\eta x_{1}+(1-\eta)x_{2}=\eta E(x^{*}_{1})+(1-\eta)E(x^{*}_{2})\in U, \forall\eta\in[0,1].$$Therefore, we have
	\begin{eqnarray}
	\tilde{g}(\eta x_{1}+(1-\eta)x_{2})&=& \tilde{g}(\eta E(x^{*}_{1})+(1-\eta)E(x^{*}_{2}))\nonumber\\ &\leq& \max\left\lbrace g(E(x^{*}_{1})),g(E(x^{*}_{2}))\right\rbrace \nonumber\\ &=& \max \left\lbrace g(x_{1}),g(x_{2})\right\rbrace\nonumber\\ &=& \max \left\lbrace \tilde{g}(x_{1}),\tilde{g}(x_{2})\right\rbrace.\nonumber
	\end{eqnarray}
\end{proof}
\begin{thm}\label{generalizedEconvex5}
	Assume that $ B\subseteq \mathbb{R}^{n} $ is an E-convex set, and $ E(B) $ is a convex set. Then, $ g\colon B\longrightarrow \mathbb{R}^{\alpha} $ is a generalized E-quasiconvex on $ B $ iff its restriction $ \tilde{g}=g_{|_{E(B)}} $ is a generalized quasiconvex function on $ E(B) $.
\end{thm}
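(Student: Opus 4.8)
The plan is to prove the two implications separately, using the convexity hypothesis on $E(B)$ as the bridge between the two notions. Since the earlier Theorem \ref{generalizedEconvex4} already handles restrictions to convex subsets of $E(B)$, the forward direction will be almost immediate, and only the converse requires a short direct computation.

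For the ``only if'' direction, I would simply specialize Theorem \ref{generalizedEconvex4}. By hypothesis $E(B)$ is convex, and it is a nonempty convex subset of itself, so taking $U=E(B)$ in that theorem yields at once that $\tilde{g}=g_{|_{E(B)}}$ is a generalized quasiconvex function on $E(B)$. No further argument is needed here beyond invoking the previously established result.

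The converse is where the actual work sits, though it too is short. Assuming $\tilde{g}$ is generalized quasiconvex on $E(B)$, I would take arbitrary $r_{1},r_{2}\in B$ and $\eta\in[0,1]$. The crucial observation is that $E(r_{1}),E(r_{2})\in E(B)$ and, because $E(B)$ is convex, the combination $\eta E(r_{1})+(1-\eta)E(r_{2})$ again lies in $E(B)$; on this set $g$ and its restriction $\tilde{g}$ agree, so I may substitute $\tilde{g}$ for $g$ at the relevant points and apply the generalized quasiconvexity of $\tilde{g}$ to the pair $E(r_{1}),E(r_{2})$:
\begin{align*}
g(\eta E(r_{1})+(1-\eta)E(r_{2})) &= \tilde{g}(\eta E(r_{1})+(1-\eta)E(r_{2})) \\
&\leq \max\{\tilde{g}(E(r_{1})),\tilde{g}(E(r_{2}))\} \\
&= \max\{g(E(r_{1})),g(E(r_{2}))\}.
\end{align*}
This is precisely the defining inequality for $g$ to be generalized E-quasiconvex on $B$, completing the converse.

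I do not expect a serious obstacle: the entire argument rests on two structural facts already at hand, namely that $E(B)\subseteq B$ by Proposition \ref{generalizedEconvex3}(i), which guarantees that $\tilde{g}$ is well defined, and that convexity of $E(B)$ keeps every convex combination of $E(r_{1})$ and $E(r_{2})$ inside $E(B)$, where the restriction coincides with $g$. The only point demanding a little care is the bookkeeping of which variable lives in $B$ and which in $E(B)$, so as to ensure that $E(r_{1})$ and $E(r_{2})$ are genuinely points of $E(B)$ and that the quasiconvexity of $\tilde{g}$ therefore applies to them directly.
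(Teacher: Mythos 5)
Your proposal is correct and follows essentially the same route as the paper: the forward direction is obtained by invoking Theorem \ref{generalizedEconvex4} with $U=E(B)$, and the converse is the same direct computation, using convexity of $E(B)$ to place $\eta E(r_{1})+(1-\eta)E(r_{2})$ in $E(B)$, where $g$ and $\tilde{g}$ agree, and then applying generalized quasiconvexity of $\tilde{g}$. Your remark that $E(B)\subseteq B$ (Proposition \ref{generalizedEconvex3}(i)) is what makes $\tilde{g}$ well defined matches the paper's setup as well.
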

\begin{proof}
	Due to Theorem\ref{generalizedEconvex4}, the if condition is true. Conversely, suppose that $ x_{1},x_{2}\in B $, then $ E(x_{1}),E(x_{2}) \in E(B) $ and $ \eta E(x_{1}) +(1-\eta)E(x_{2})\in E(B) \subseteq B, \forall\eta\in[0,1] $.\\
	Since $ E(B)\subseteq B $, then
	\begin{eqnarray}
	g(\eta E(x_{1}) +(1-\eta)E(x_{2}))&=& \tilde{g}(\eta E(x_{1}) +(1-\eta)E(x_{2}))\nonumber\\ &\leq& \max\left\lbrace\tilde{g}(E(x_{1})),\tilde{g}(E(x_{2})) \right\rbrace\nonumber\\ &=& \max\left\lbrace g(E(x_{1})),g(E(x_{2})) \right\rbrace . \nonumber
	\end{eqnarray}
\end{proof}
An analogous result to Theorem\ref{generalizedEconvex4} for the generalized E-convex case is as follows:
\begin{thm}\label{generalizedEconvex6}
	Assume that $ B\subseteq \mathbb{R}^{n} $ is an E-convex set, and $ g\colon B\longrightarrow \mathbb{R}^{\alpha} $ is a gECF on $ B $. Then, the restriction $ \tilde{g}\colon U\longrightarrow \mathbb{R}^{\alpha} $ of $ g $ to any nonempty convex subset $ U $ of $ E(B) $ is a gCF.
\end{thm}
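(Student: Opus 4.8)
The plan is to mimic the argument of Theorem \ref{generalizedEconvex4}, simply replacing the quasiconvexity estimate there by the defining inequality (\ref{generalizedE-convex1}) of a gECF. The underlying mechanism is identical: every point of $U \subseteq E(B)$ is of the form $E(x^{*})$ for some $x^{*} \in B$, and this surjectivity onto $E(B)$ lets the generalized E-convexity of $g$ on $B$ descend to genuine generalized convexity of the restriction $\tilde{g}$ on $U$.

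First I would fix $x_{1}, x_{2} \in U$ and $\eta \in [0,1]$. Since $U \subseteq E(B)$, there exist preimages $x_{1}^{*}, x_{2}^{*} \in B$ with $x_{1} = E(x_{1}^{*})$ and $x_{2} = E(x_{2}^{*})$. Because $U$ is convex, the combination $\eta x_{1} + (1-\eta)x_{2} = \eta E(x_{1}^{*}) + (1-\eta)E(x_{2}^{*})$ again lies in $U$, so $\tilde{g}$ is legitimately defined at that point and coincides there with $g$.

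Next I would invoke (\ref{generalizedE-convex1}): for $\eta \in (0,1)$,
$$g(\eta E(x_{1}^{*}) + (1-\eta)E(x_{2}^{*})) \leq \eta^{\alpha} g(E(x_{1}^{*})) + (1-\eta)^{\alpha} g(E(x_{2}^{*})).$$
Rewriting each side through the identity $\tilde{g}(E(x_{i}^{*})) = g(E(x_{i}^{*})) = \tilde{g}(x_{i})$ converts this into
$$\tilde{g}(\eta x_{1} + (1-\eta)x_{2}) \leq \eta^{\alpha} \tilde{g}(x_{1}) + (1-\eta)^{\alpha} \tilde{g}(x_{2}),$$
which is exactly the definition of a generalized convex function on $U$. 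The endpoint cases $\eta = 0$ and $\eta = 1$ are immediate, so they can be appended directly.

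I do not expect any genuine obstacle here, since the content is a bookkeeping translation between the two definitions. The only points requiring care are verifying that the convex combination remains inside $U$ (so that $\tilde{g}$ may be evaluated there), which is precisely where convexity of $U$, rather than merely the inclusion $U \subseteq E(B)$, is used, and reconciling the admissible range of $\eta$: the gECF definition is stated for $\eta \in (0,1)$, whereas the target generalized convexity inequality is posed for $\eta \in [0,1]$, a gap closed by the trivial endpoint cases.
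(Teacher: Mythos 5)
Your proof is correct and is precisely the argument the paper intends: the paper states this theorem without proof, presenting it only as the ``analogous result'' to Theorem \ref{generalizedEconvex4}, and your argument is exactly that proof with the quasiconvexity bound replaced by the defining gECF inequality (\ref{generalizedE-convex1}). Your handling of the $\eta\in(0,1)$ versus $\eta\in[0,1]$ discrepancy via the trivial endpoint cases is a detail the paper glosses over, and you resolve it correctly.
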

An analogous result to Theorem\ref{generalizedEconvex5} for the generalized E-convex case is as follows:
\begin{thm}
	Assume that $ B\subseteq \mathbb{R}^{n} $ is an E-convex set, and $ E(B) $ is a convex set. Then, $ g\colon B\longrightarrow \mathbb{R}^{\alpha} $ is a gECF on $ B $ iff its restriction $ \tilde{g}=g_{|_{E(B)}} $ is a gCF on $ E(B) $.
\end{thm}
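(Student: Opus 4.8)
The plan is to prove the two implications separately, exploiting the hypothesis that $E(B)$ is convex so that it qualifies as a legitimate convex subset $U$ of $E(B)$ in the earlier results. First I would dispose of the ``only if'' direction essentially for free: if $g$ is a gECF on $B$, then by Theorem \ref{generalizedEconvex6} its restriction to any nonempty convex subset of $E(B)$ is a gCF. Taking $U=E(B)$, which is permissible precisely because $E(B)$ is convex by hypothesis, immediately yields that $\tilde{g}=g_{|_{E(B)}}$ is a gCF on $E(B)$.

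For the converse I would argue directly from the definitions, mirroring the proof of Theorem \ref{generalizedEconvex5} but with the weighted fractional combination in place of the maximum. Assume $\tilde{g}$ is a gCF on $E(B)$ and fix arbitrary $x_{1},x_{2}\in B$ and $\eta\in[0,1]$. Then $E(x_{1}),E(x_{2})\in E(B)$, and since $E(B)$ is convex the combination $\eta E(x_{1})+(1-\eta)E(x_{2})$ lies in $E(B)$; moreover, by Proposition \ref{generalizedEconvex3}(i) we have $E(B)\subseteq B$, so this point also lies in $B$ and $g$ agrees with $\tilde{g}$ there.

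Applying the defining inequality of a gCF to the two points $E(x_{1}),E(x_{2})\in E(B)$ and then using $\tilde{g}=g$ on $E(B)$ gives
\begin{eqnarray}
g(\eta E(x_{1})+(1-\eta)E(x_{2})) &=& \tilde{g}(\eta E(x_{1})+(1-\eta)E(x_{2}))\nonumber\\
&\leq& \eta^{\alpha}\tilde{g}(E(x_{1}))+(1-\eta)^{\alpha}\tilde{g}(E(x_{2}))\nonumber\\
&=& \eta^{\alpha}g(E(x_{1}))+(1-\eta)^{\alpha}g(E(x_{2})),\nonumber
\end{eqnarray}
which is precisely the gECF inequality (\ref{generalizedE-convex1}); hence $g$ is a gECF on $B$.

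There is no serious obstacle here. The only two points requiring care are, first, that the combination of $E(x_{1})$ and $E(x_{2})$ must be shown to lie inside $E(B)$ before the generalized convexity of $\tilde{g}$ can be invoked, which is exactly where the convexity hypothesis on $E(B)$ enters, and second, that $E(B)\subseteq B$ by Proposition \ref{generalizedEconvex3}(i), which is what licenses replacing $\tilde{g}$ by $g$ in the first and last lines of the display.
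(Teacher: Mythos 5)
Your proof is correct and is exactly the argument the paper intends: the paper states this theorem without proof, merely calling it the analogue of Theorem \ref{generalizedEconvex5}, and your two-step argument — Theorem \ref{generalizedEconvex6} with $U=E(B)$ for the forward direction, and for the converse the direct computation obtained from the proof of Theorem \ref{generalizedEconvex5} by replacing the maximum with the weighted combination $\eta^{\alpha}g(E(x_{1}))+(1-\eta)^{\alpha}g(E(x_{2}))$ — is precisely that analogue written out. The two points you flag (convexity of $E(B)$ to keep $\eta E(x_{1})+(1-\eta)E(x_{2})$ inside $E(B)$, and $E(B)\subseteq B$ via Proposition \ref{generalizedEconvex3}(i) to identify $\tilde{g}$ with $g$ at the relevant points) are the same ingredients the paper uses in its proof of Theorem \ref{generalizedEconvex5}.
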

The lower level set of $ goE\colon B\longrightarrow \mathbb{R}^{\alpha} $ is defined as
$$L_{r^{\alpha}}(goE)=\left\lbrace x\in B\colon (goE)(x)=g(E(x))\leq r^{\alpha},r^{\alpha}\in \mathbb{R}^{\alpha} \right\rbrace. $$
The lower level set of $ \tilde{g}\colon E(B)\longrightarrow \mathbb{R}^{\alpha} $ is defined as
$$L_{r^{\alpha}}(\tilde{g})=\left\lbrace\tilde{x}\in E(B)\colon \tilde{g}(\tilde{x})=g(\tilde{x})\leq r^{\alpha}, r^{\alpha}\in\mathbb{R}^{\alpha} \right\rbrace .$$
\begin{thm}
	Supose that $ E(B) $ be a convex set. A function $ g\colon B\longrightarrow \mathbb{R}^{\alpha} $ is a generalized E-quasiconvex iff $ L_{r^{\alpha}}(\tilde{g}) $ of its restriction $ \tilde{g}\colon E(B)\longrightarrow \mathbb{R}^{\alpha}  $ is a convex set for each $ r^{\alpha}\in \mathbb{R}^{\alpha} $.
\end{thm}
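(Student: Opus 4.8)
The plan is to reduce the biconditional to the purely convex-analytic level-set characterization of generalized quasiconvexity. Since $E(B)$ is convex and $B$ is E-convex, Theorem~\ref{generalizedEconvex5} already identifies generalized E-quasiconvexity of $g$ on $B$ with generalized quasiconvexity of the restriction $\tilde{g}$ on $E(B)$. Hence it suffices to prove that $\tilde{g}\colon E(B)\longrightarrow\mathbb{R}^{\alpha}$ is generalized quasiconvex if and only if every lower level set $L_{r^{\alpha}}(\tilde{g})$ is convex, which is the fractal analogue of the classical fact that quasiconvex functions are exactly those with convex sublevel sets. The exponent $\alpha$ enters only through the function values, not through the argument of $\tilde{g}$, so the geometry of the level sets is still governed by ordinary convex combinations of points in $E(B)$.

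For the forward direction I would fix $r^{\alpha}\in\mathbb{R}^{\alpha}$ and take $\tilde{x}_{1},\tilde{x}_{2}\in L_{r^{\alpha}}(\tilde{g})$, so that $\tilde{g}(\tilde{x}_{1})\leq r^{\alpha}$ and $\tilde{g}(\tilde{x}_{2})\leq r^{\alpha}$. Because $E(B)$ is convex, $\eta\tilde{x}_{1}+(1-\eta)\tilde{x}_{2}\in E(B)$ for every $\eta\in[0,1]$, and generalized quasiconvexity of $\tilde{g}$ gives
$$\tilde{g}(\eta\tilde{x}_{1}+(1-\eta)\tilde{x}_{2})\leq\max\left\lbrace\tilde{g}(\tilde{x}_{1}),\tilde{g}(\tilde{x}_{2})\right\rbrace\leq r^{\alpha},$$
so the convex combination again lies in $L_{r^{\alpha}}(\tilde{g})$; since $r^{\alpha}$ was arbitrary, every level set is convex.

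For the converse I would take arbitrary $\tilde{x}_{1},\tilde{x}_{2}\in E(B)$ and set $r^{\alpha}=\max\left\lbrace\tilde{g}(\tilde{x}_{1}),\tilde{g}(\tilde{x}_{2})\right\rbrace$, which is a legitimate element of $\mathbb{R}^{\alpha}$ because it equals one of the two values. Both points then belong to $L_{r^{\alpha}}(\tilde{g})$, and the assumed convexity of this level set forces $\eta\tilde{x}_{1}+(1-\eta)\tilde{x}_{2}\in L_{r^{\alpha}}(\tilde{g})$, i.e. $\tilde{g}(\eta\tilde{x}_{1}+(1-\eta)\tilde{x}_{2})\leq r^{\alpha}=\max\left\lbrace\tilde{g}(\tilde{x}_{1}),\tilde{g}(\tilde{x}_{2})\right\rbrace$. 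This is exactly generalized quasiconvexity of $\tilde{g}$, and passing back through Theorem~\ref{generalizedEconvex5} returns generalized E-quasiconvexity of $g$. A fully self-contained variant simply repeats these two steps with $\tilde{x}_{i}=E(x_{i}^{*})$ and uses the definition of generalized E-quasiconvexity directly, avoiding the appeal to Theorem~\ref{generalizedEconvex5}.

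The only point I expect to require genuine care is the order structure on $\mathbb{R}^{\alpha}$: I must confirm that $\max\left\lbrace\tilde{g}(\tilde{x}_{1}),\tilde{g}(\tilde{x}_{2})\right\rbrace$ is well defined as an element of $\mathbb{R}^{\alpha}$ and that the inequality $\leq$ is the monotone order inherited by the fractal values, so that the sublevel-set definition and the quasiconvexity inequality are mutually consistent. Once this is settled, both implications are immediate transcriptions of the classical argument, with $E(B)$ playing the role of the convex domain.
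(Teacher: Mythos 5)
Your proof is correct, and its computational core --- the two inequality chains --- coincides with the paper's; the difference is one of organization. The paper proves both directions directly from the definition of generalized E-quasiconvexity: for the forward implication it writes $\tilde{x}_{i}=E(x_{i})\in L_{r^{\alpha}}(\tilde{g})$ and applies $g(\eta E(x_{1})+(1-\eta)E(x_{2}))\leq \max\left\lbrace g(E(x_{1})),g(E(x_{2}))\right\rbrace\leq r^{\alpha}$, and for the converse it sets $r^{\alpha}=\max\left\lbrace g(\tilde{x}_{1}),g(\tilde{x}_{2})\right\rbrace$ exactly as you do; this is precisely your ``fully self-contained variant.'' Your primary route instead factors the statement through Theorem~\ref{generalizedEconvex5}, identifying generalized E-quasiconvexity of $g$ with generalized quasiconvexity of $\tilde{g}$ on the convex set $E(B)$, so that the only thing left to prove is the classical equivalence between quasiconvexity and convexity of sublevel sets. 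That modularization buys clarity and reuse: it isolates the purely convex-analytic content and keeps the E-structure confined to an already-established theorem. What the paper's direct proof buys is independence from Theorem~\ref{generalizedEconvex5}, whose hypotheses include that $B$ is E-convex --- an assumption the present theorem does not state explicitly but which both arguments need implicitly, since the definition of generalized E-quasiconvexity presupposes it and it is what guarantees $E(B)\subseteq B$ by Proposition~\ref{generalizedEconvex3}(i), a containment the paper's proof invokes. Your closing caution about the order on $\mathbb{R}^{\alpha}$ (so that the maximum of two values is well defined and compatible with the sublevel-set definition) is well placed; the paper simply takes this for granted.
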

\begin{proof}
	Due to  $ E(B) $ be a convex set, then for each $ E(x_{1}),E(x_{2})\in E(B) $, we have $ \eta E(x_{1})+(1-\eta) E(x_{2})\in E(B)\subseteq B$. Let $ \tilde{x}_{1}=E(x_{1}) $ and $ \tilde{x}_{2}=E(x_{2}) $. If $ \tilde{x}_{1},\tilde{x}_{2}\in L_{r^{\alpha}}(\tilde{g}) $, then $ g(\tilde{x}_{1})\leq r^{\alpha} $ and $ g(\tilde{x}_{2})\leq r^{\alpha} $. Thus,
	\begin{eqnarray}
	\tilde{g}(\eta \tilde{x}_{1}+(1-\eta)\tilde{x}_{2})&=& g(\eta \tilde{x}_{1}+(1-\eta)\tilde{x}_{2})\nonumber\\ &=& g(\eta E(x_{1})+(1-\eta)E(x_{2}))\nonumber\\ &\leq& \max\left\lbrace g(E(x_{1})) ,g(E(x_{2}))\right\rbrace\nonumber\\ &=& \max \left\lbrace g(\tilde{x}_{1}),g(\tilde{x}_{2}) \right\rbrace \nonumber\\ &=& \max\left\lbrace \tilde{g}(\tilde{x}_{1}),\tilde{g}(\tilde{x}_{2})\right\rbrace\nonumber\\ &\leq& r^{\alpha}.\nonumber
	\end{eqnarray}
	which show that $ \eta \tilde{x}_{1}+(1-\eta) \tilde{x}_{2}\in L_{r^{\alpha}}(\tilde{g}) $. Hence, $ L_{r^{\alpha}}(\tilde{g}) $ is a convex set.\\
	
	Conversely, let $ L_{r^{\alpha}}(\tilde{g}) $ be a convex set for each $ r^{\alpha}\in \mathbb{R}^{\alpha} $, i.e., $ \eta \tilde{x}_{1}+(1-\eta) \tilde{x}_{2}\in L_{r^{\alpha}}(\tilde{g}), \forall \tilde{x}_{1},\tilde{x}_{2}\in L_{r^{\alpha}}(\tilde{g}) $ and $ r^{\alpha}=\max\left\lbrace g(\tilde{x}_{1}),g(\tilde{x}_{2}) \right\rbrace $. Thus,
	\begin{eqnarray}
	g(\eta E(x_{1})+(1-\eta)E(x_{2})) &=& \tilde{g}(\eta E(x_{1})+(1-\eta)E(x_{2}))\nonumber\\ &=& \tilde{g}(\eta \tilde{x}_{1}+(1-\eta)\tilde{x}_{2})\nonumber\\ &\leq& r^{\alpha}\nonumber\\&=&\max\left\lbrace g(\tilde{x}_{1}),g(\tilde{x}_{2}) \right\rbrace \nonumber\\ &=& \max\left\lbrace g(E(x_{1})),g(E(x_{2})) \right\rbrace. \nonumber
	\end{eqnarray}
	Hence, $ g $ is a generalized E-quasiconvex.
\end{proof}
\begin{thm}
	Let $ B\subseteq \mathbb{R}^{n} $ be a nonempty E-convex set and let $ g_{1}\colon B\longrightarrow \mathbb{R}^{\alpha} $ be a generalized E-quasiconvex on $ B $. Suppose that $ g_{2}\colon \mathbb{R}^{\alpha}\longrightarrow \mathbb{R}^{\alpha} $ is a non-decreasing function. Then, $ g_{2}og_{1} $ is a generalized E-quasiconvex.
\end{thm}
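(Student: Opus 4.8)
The plan is to reduce the claim to the defining inequality for $g_1$ and then transport that inequality through $g_2$ using monotonicity. First I would fix arbitrary points $r_1,r_2\in B$ and a scalar $\eta\in[0,1]$. Since $B$ is E-convex, the point $\eta E(r_1)+(1-\eta)E(r_2)$ lies in $B$, so the generalized E-quasiconvexity of $g_1$ applies and gives
$$g_1(\eta E(r_1)+(1-\eta)E(r_2))\leq \max\{g_1(E(r_1)),g_1(E(r_2))\}.$$

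Next I would apply the non-decreasing map $g_2$ to both sides, which preserves the inequality, and then simplify the right-hand side. The single substantive step is the elementary observation that a non-decreasing function commutes with the maximum, i.e. $g_2(\max\{a,b\})=\max\{g_2(a),g_2(b)\}$ for all $a,b\in\mathbb{R}^{\alpha}$; this is checked by splitting into the cases $a\leq b$ and $b\leq a$, where in each case monotonicity forces both sides to equal the larger of $g_2(a),g_2(b)$. Combining this with the inequality above and writing $(g_2\circ g_1)(x)=g_2(g_1(x))$ yields
$$(g_2\circ g_1)(\eta E(r_1)+(1-\eta)E(r_2))\leq \max\{(g_2\circ g_1)(E(r_1)),(g_2\circ g_1)(E(r_2))\},$$
which is precisely the defining property of generalized E-quasiconvexity for $g_2\circ g_1$. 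Since $r_1,r_2$ and $\eta$ were arbitrary, the conclusion follows.

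I do not expect any genuine obstacle here: the argument mirrors the classical fact that a non-decreasing function of a quasiconvex function is quasiconvex, and the fractal exponent $\alpha$ plays no active role, entering only through the codomain $\mathbb{R}^{\alpha}$ rather than through the argument or the convex-combination parameter $\eta$. The only point worth stating with care is the commuting-with-$\max$ identity, because that is exactly where the hypothesis that $g_2$ is non-decreasing is used; without it the order of the two values $g_1(E(r_1))$ and $g_1(E(r_2))$ could be reversed under $g_2$ and the bound by the maximum would fail.
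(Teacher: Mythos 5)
Your proof is correct and follows essentially the same route as the paper: apply the non-decreasing $g_{2}$ to the generalized E-quasiconvexity inequality for $g_{1}$ and use that a monotone map carries the maximum of two values to the maximum of their images. The only difference is that you spell out the case analysis behind $g_{2}(\max\{a,b\})=\max\{g_{2}(a),g_{2}(b)\}$, which the paper uses implicitly.
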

\begin{proof}
	Since  $ g_{1}\colon B\longrightarrow \mathbb{R}^{\alpha} $ is generalized E-quasiconvex on $ B $ and $ g_{2}\colon \mathbb{R}^{\alpha}\longrightarrow \mathbb{R}^{\alpha} $ is a non-decreasing function, then
	\begin{eqnarray}
	(g_{2}og_{1})(\eta E(x_{1})+(1-\eta)E(x_{2}))&=& g_{2}(g_{1}(\eta E(x_{1})+(1-\eta)E(x_{2})))\nonumber\\ &\leq& g_{2}(\max\left\lbrace g_{1}(E(x_{1})),g_{1}(E(x_{2})) \right\rbrace )\nonumber\\ &=& \max\left\lbrace (g_{2}og_{1})(E(x_{1})),(g_{2}og_{1})(E(x_{2})) \right\rbrace  \nonumber
	\end{eqnarray}
	which shows that $ g_{2}og_{1} $ is a generalized E-quasiconvex on $ B $.
\end{proof}
\newpage
\begin{thm}
	If the function $ g $ is a gECF on $ B\subseteq \mathbb{R}^{n} $, then $ g $ is a generalized E-quasiconvex on $ B $.
\end{thm}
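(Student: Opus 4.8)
The plan is to read off generalized E-quasiconvexity directly from the defining inequality (\ref{generalizedE-convex1}) of a gECF, by chaining that inequality with an elementary bound of a weighted combination by a maximum. Fix $r_{1},r_{2}\in B$ and $\eta\in[0,1]$. The two boundary cases $\eta=0$ and $\eta=1$ should be disposed of first: there the left-hand side $g(\eta E(r_{1})+(1-\eta)E(r_{2}))$ collapses to $g(E(r_{2}))$ or $g(E(r_{1}))$, each of which is trivially at most $\max\{g(E(r_{1})),g(E(r_{2}))\}$. So I would reduce to $\eta\in(0,1)$, which is exactly the range in which the gECF hypothesis is stated.

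For $\eta\in(0,1)$, since $g$ is a gECF on the E-convex set $B$, inequality (\ref{generalizedE-convex1}) gives
$$g(\eta E(r_{1})+(1-\eta)E(r_{2}))\le \eta^{\alpha}g(E(r_{1}))+(1-\eta)^{\alpha}g(E(r_{2})).$$
Writing $M:=\max\{g(E(r_{1})),g(E(r_{2}))\}$, the whole argument then reduces to the single inequality
$$\eta^{\alpha}g(E(r_{1}))+(1-\eta)^{\alpha}g(E(r_{2}))\le M,$$
for once this is in hand it combines with the displayed gECF bound to yield $g(\eta E(r_{1})+(1-\eta)E(r_{2}))\le M$, which is precisely the defining inequality of a generalized E-quasiconvex function.

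I expect this last inequality to be the main obstacle, and it is where all the real content lies. In the classical situation $\alpha=1$ the passage is immediate, because $\eta+(1-\eta)=1$ forces $\eta\,g(E(r_{1}))+(1-\eta)\,g(E(r_{2}))\le M$. For $\alpha\in(0,1)$ this is no longer automatic: replacing both $g(E(r_{1}))$ and $g(E(r_{2}))$ by $M$ only gives the bound $\bigl(\eta^{\alpha}+(1-\eta)^{\alpha}\bigr)M$, and since $\eta^{\alpha}+(1-\eta)^{\alpha}\ge 1$ on $(0,1)$ this crude replacement overshoots $M$. The step therefore cannot be argued as in the convex case and must use finer information about the values involved — for instance, distinguishing which of $g(E(r_{1})),g(E(r_{2}))$ realizes the maximum and exploiting sign constraints on the range in $\mathbb{R}^{\alpha}$ (the inequality does go through, e.g., when both values are non-positive). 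I would thus devote the bulk of the write-up to establishing the auxiliary estimate $\eta^{\alpha}A+(1-\eta)^{\alpha}B\le\max\{A,B\}$ under the precise hypotheses in force, since the remainder of the proof is just the two-line chaining above.
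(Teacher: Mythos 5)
Your two-step skeleton --- apply (\ref{generalizedE-convex1}), then bound the weighted combination by the maximum --- is exactly the paper's proof, and your separate treatment of the endpoints $\eta\in\{0,1\}$ is a legitimate (if minor) refinement, since the gECF inequality is stated for $\eta\in(0,1)$ while generalized E-quasiconvexity quantifies over $\eta\in[0,1]$. But the point where you stop, declaring the estimate $\eta^{\alpha}A+(1-\eta)^{\alpha}B\le\max\{A,B\}$ to be the ``real content'' that fails in general for $\alpha\in(0,1)$, rests on a misreading of the codomain. Here $A,B$ are not ordinary real numbers: they are elements of Yang's fractal set $\mathbb{R}^{\alpha}$, whose arithmetic (see \cite{Yang2012,HuixiaSuiDongyan2014}) satisfies $a^{\alpha}+b^{\alpha}=(a+b)^{\alpha}$ and $a^{\alpha}b^{\alpha}=(ab)^{\alpha}$, with $a^{\alpha}\le b^{\alpha}$ precisely when $a\le b$. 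Consequently $\eta^{\alpha}+(1-\eta)^{\alpha}=(\eta+(1-\eta))^{\alpha}=1^{\alpha}$, the multiplicative unit of $\mathbb{R}^{\alpha}$ --- not a real number exceeding $1$ --- and for $A=a^{\alpha}$, $B=b^{\alpha}$ one gets $\eta^{\alpha}A+(1-\eta)^{\alpha}B=(\eta a+(1-\eta)b)^{\alpha}\le(\max\{a,b\})^{\alpha}=\max\{A,B\}$ for every $\eta\in[0,1]$, with no sign restriction whatsoever: the order isomorphism $x\mapsto x^{\alpha}$ transports the classical convex-combination bound verbatim, so the step you flag as the main obstacle is immediate.

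This is exactly what the paper does: it bounds both terms by $M=\max\{g(E(r_{1})),g(E(r_{2}))\}$ and concludes $\eta^{\alpha}M+(1-\eta)^{\alpha}M=M$, which is an identity in $\mathbb{R}^{\alpha}$ (writing $M=m^{\alpha}$, it reads $(\eta m+(1-\eta)m)^{\alpha}=m^{\alpha}$). Your objection that this replacement ``overshoots'' by the factor $\eta^{\alpha}+(1-\eta)^{\alpha}\ge 1$ would be cogent if $\mathbb{R}^{\alpha}$ carried ordinary real addition --- you have identified precisely the step that would break under that reading --- but under the operative definitions nothing finer is needed, the case analysis and non-positivity hypotheses you contemplate are superfluous, and your proposal as written is incomplete precisely because it defers (indeed denies) the one lemma that closes it. The repair is not additional hypotheses but the correct arithmetic on $\mathbb{R}^{\alpha}$.
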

\begin{proof}
	Assume that $ g $ is a gECF on $ B $. Then,
	\begin{eqnarray}
	g(\eta E(r_{1})+(1-\eta)E(r_{2}))&\leq& \eta^{\alpha}g(E(r_{1}))+
	(1-\eta)^{\alpha}g(E(r_{2}))\nonumber\\&\leq& \eta^{\alpha}\max\left\lbrace g(E(r_{1})),g(E(r_{2})) \right\rbrace\nonumber\\\hspace{0.5in}&&  +(1-\eta)^{\alpha}\max\left\lbrace g(E(r_{1})),g(E(r_{2})) \right\rbrace\ \nonumber\\&=& \max\left\lbrace g(E(r_{1})),g(E(r_{2})) \right\rbrace.\nonumber
	\end{eqnarray}
\end{proof}
\section{$ E^{\alpha} $-epigraph}
\begin{definition}
	Assume that $ B\subseteq \mathbb{R}^{n}\times \mathbb{R}^{\alpha} $ and $ E\colon \mathbb{R}^{n}\longrightarrow \mathbb{R}^{n} $, then the set B is called $ E^{\alpha} $-convex set iff $$\left(\eta E(x_{1})+(1-\eta)E(x_{2}), \eta^{\alpha}r^{\alpha}_{1}+(1-\eta)^{\alpha}r^{\alpha}_{2} \right)\in B $$
	$ \forall (x_{1},r^{\alpha}_{1}),(x_{2},r^{\alpha}_{2})\in B $, $ \eta\in[0,1] $ and $ \alpha\in\left(\left.0,1 \right]  \right. $.
\end{definition}
Now, the $ E^{\alpha} $- epigraph of $ g $ is given by
$$epi_{E^{\alpha}}(g)=\left\lbrace(E(x),r^{\alpha})\colon x\in B, r^{\alpha}\in \mathbb{R}^{\alpha}, g(E(x))\leq r^{\alpha} \right\rbrace. $$

A sufficient condition for $ g $ to be a gECF is given by the following theorem:
\begin{thm}
	Let $ E\colon \mathbb{R}^{n}\longrightarrow \mathbb{R}^{n} $ be an idempoted map. Assume that $ B\subseteq \mathbb{R}^{n}$ is an E-convex set and $ epi_{E^{\alpha}}(g) $ is an $ E^{\alpha} $-convex set where $ g\colon B\longrightarrow \mathbb{R}^{\alpha} $, then $ g $ is a gECF on $ B $.
\end{thm}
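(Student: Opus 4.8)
The plan is to use the standard epigraph technique, adapted to the $E^{\alpha}$-setting. To prove $g$ is a gECF, I must establish the defining inequality (\ref{generalizedE-convex1}) for arbitrary $r_{1},r_{2}\in B$ and $\eta\in(0,1)$. The starting observation is that the two points $(E(r_{1}),g(E(r_{1})))$ and $(E(r_{2}),g(E(r_{2})))$ both lie in $epi_{E^{\alpha}}(g)$: each has the form $(E(x),r^{\alpha})$ with $x=r_{i}\in B$, and the required bound $g(E(r_{i}))\leq g(E(r_{i}))$ holds trivially.

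Next I would invoke the hypothesis that $epi_{E^{\alpha}}(g)$ is an $E^{\alpha}$-convex set. Applying its defining property to the two epigraph points above yields
$$\bigl(\eta E(E(r_{1}))+(1-\eta)E(E(r_{2})),\ \eta^{\alpha}g(E(r_{1}))+(1-\eta)^{\alpha}g(E(r_{2}))\bigr)\in epi_{E^{\alpha}}(g).$$
This is precisely where idempotency of $E$ enters: since $E\circ E=E$, the first coordinate collapses to $\eta E(r_{1})+(1-\eta)E(r_{2})$. Without idempotency the first coordinate would remain $\eta E(E(r_{1}))+(1-\eta)E(E(r_{2}))$, which need not equal $\eta E(r_{1})+(1-\eta)E(r_{2})$, and the identification in the last step would fail.

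Finally, I would read off the conclusion from the definition of the epigraph. Membership $(p,q)\in epi_{E^{\alpha}}(g)$ forces $p=E(x)$ for some $x\in B$ together with $g(p)=g(E(x))\leq q$. Taking $p=\eta E(r_{1})+(1-\eta)E(r_{2})$ (which lies in $B$ by E-convexity of $B$, so $g(p)$ is defined) and $q=\eta^{\alpha}g(E(r_{1}))+(1-\eta)^{\alpha}g(E(r_{2}))$, this gives exactly
$$g(\eta E(r_{1})+(1-\eta)E(r_{2}))\leq \eta^{\alpha}g(E(r_{1}))+(1-\eta)^{\alpha}g(E(r_{2})),$$
which is (\ref{generalizedE-convex1}); hence $g$ is a gECF on $B$.

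The main obstacle is conceptual rather than computational: one must correctly use idempotency to transport the first coordinate back to $\eta E(r_{1})+(1-\eta)E(r_{2})$, and then interpret epigraph membership properly, namely that lying in $epi_{E^{\alpha}}(g)$ forces the first coordinate to factor through $E$ so that the second coordinate genuinely bounds $g$ evaluated at that point. Everything else is a direct substitution into the definitions.
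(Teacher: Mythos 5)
Your proposal is correct and follows essentially the same route as the paper's own proof: both take the two epigraph points $(E(r_{1}),g(E(r_{1})))$ and $(E(r_{2}),g(E(r_{2})))$, apply the $E^{\alpha}$-convexity of $epi_{E^{\alpha}}(g)$, read off the inequality from epigraph membership, and use idempotency of $E$ to collapse $E(E(r_{i}))$ to $E(r_{i})$. The only (immaterial) difference is the order of the last two steps, and your write-up is in fact cleaner in spelling out why epigraph membership forces the bound on $g$ at the combined point.
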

\begin{proof}
	Assume that $ r_{1},r_{2}\in B $ and $ (E(r_{1}),g(E(r_{1}))),(E(r_{2}),g(E(r_{2})))\in epi_{E^{\alpha}}(g) $. Since $ epi_{E^{\alpha}}(g) $ is $ E^{\alpha} $- convex set, we have
	$$\left(\eta E(E(r_{1}))+(1-\eta)E(E(r_{2})),\eta^{\alpha}g(E(r_{1}))+(1-\eta)^{\alpha}g(E(r_{2})) \right)\in epi_{E^{\alpha}}(g) ,$$
	then
	$$g(E(\eta E(r_{1}))+(1-\eta)E(E(r_{2})))\leq \eta^{\alpha}g(E(r_{1}))+(1-\eta)^{\alpha}g(E(r_{2})). $$
	Due to $ E\colon \mathbb{R}^{n}\longrightarrow \mathbb{R}^{n} $ be an idempotent map, then
	$$g(\eta E(r_{1})+(1-\eta)E(r_{2}))\leq \eta^{\alpha}g(E(r_{1}))+(1-\eta)^{\alpha}g(E(r_{2})). $$
	Hence, $ g $ is a gECF.
\end{proof}
\begin{thm}\label{generalizedEconvex7}
	Assume that $ \left\lbrace B_{i} \right\rbrace _{i\in I} $ is a family of $ E^{\alpha} $-convex sets. Then, their intersection $ \cap_{i\in I}B_{i} $ is an $ E^{\alpha} $-convex set.	
\end{thm}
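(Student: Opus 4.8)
The plan is to verify the defining inclusion of an $E^{\alpha}$-convex set directly for the intersection, leaning entirely on the fact that every member of the family already satisfies it. Before starting I would record the standing assumption that all the sets $B_i$ are $E^{\alpha}$-convex with respect to one and the same map $E\colon \mathbb{R}^{n}\longrightarrow \mathbb{R}^{n}$; this matters, because the combined point in the definition is built from $E$, and only a shared $E$ lets a single formula describe the convex combination in every $B_i$ at once. This mirrors the classical fact that an arbitrary intersection of convex sets is convex.

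First I would fix arbitrary parameters $\eta\in[0,1]$ and $\alpha\in\left(\left.0,1\right]\right.$, and pick two arbitrary points $(x_{1},r^{\alpha}_{1}),(x_{2},r^{\alpha}_{2})\in\cap_{i\in I}B_{i}$. By the definition of intersection, both points lie in $B_{i}$ for every index $i\in I$. Since each $B_{i}$ is $E^{\alpha}$-convex, the defining condition applied inside each $B_{i}$ yields that
$$\left(\eta E(x_{1})+(1-\eta)E(x_{2}),\ \eta^{\alpha}r^{\alpha}_{1}+(1-\eta)^{\alpha}r^{\alpha}_{2}\right)\in B_{i}\qquad\text{for every }i\in I.$$
Because this membership holds simultaneously for all indices, the combined point belongs to $\cap_{i\in I}B_{i}$ as well.

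Finally, since $(x_{1},r^{\alpha}_{1})$, $(x_{2},r^{\alpha}_{2})$, $\eta$ and $\alpha$ were arbitrary, the above establishes the defining inclusion of an $E^{\alpha}$-convex set for $\cap_{i\in I}B_{i}$, and the intersection is therefore $E^{\alpha}$-convex. I do not expect any genuine obstacle here: the argument is the standard ``closure under intersection'' reasoning that applies to any property expressed by a universally quantified membership condition. The only subtle point, worth a single sentence in the write-up, is the shared-$E$ remark flagged above, so that the convex-combination formula is unambiguous across the whole family.
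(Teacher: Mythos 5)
Your proof is correct and follows essentially the same route as the paper's: fix two points in the intersection, apply the $E^{\alpha}$-convexity condition in each $B_{i}$ simultaneously, and conclude that the combined point lies in $\cap_{i\in I}B_{i}$. Your added remark that all $B_{i}$ must be $E^{\alpha}$-convex with respect to the same map $E$ is a sensible clarification that the paper leaves implicit, but it does not change the argument.
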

\begin{proof}
	Considering $ (x_{1}, r_{1}^{\alpha}),(x_{2}, r_{2}^{\alpha})\in \cap_{i\in I}B_{i} $, then $ (x_{1}, r_{1}^{\alpha}),(x_{2}, r_{2}^{\alpha})\in B_{i}$, $\forall i\in I $.  By $ E^{\alpha} $-convexity of $ B_{i},\forall i\in I $, then we have
	$$\left(\eta E(x_{1})+(1-\eta) E(x_{2}), \eta^{\alpha}r_{1}^{\alpha}+(1-\eta)^{\alpha}r^{\alpha}_{2} \right) \in B_{i}, $$ $ \forall \eta\in[0,1] $ and $\alpha\in\left(\left.0,1 \right]  \right.$. Hence,  $$\left(\eta  E(x_{1})+(1-\eta) E(x_{2}), \eta^{\alpha}r_{1}^{\alpha}+(1-\eta)^{\alpha}r^{\alpha}_{2} \right) \in \cap_{i\in I}B_{i} .$$
\end{proof}
The following theorem is a special case of Theorem \ref{generalizedEconvex2}(i) where $ E\colon \mathbb{R}^{n}\longrightarrow \mathbb{R}^{n}  $ is an idempotent map.
\begin{thm}
	Assume that $ E\colon \mathbb{R}^{n}\longrightarrow \mathbb{R}^{n}  $ is an idempotent map, and $ B\subseteq \mathbb{R}^{n} $ is an E-convex set. Let $ \left\lbrace g_{i} \right\rbrace _{i\in I} $ be a family function which have bounded from above. If $ epi_{E^{\alpha}} (g_{i})$ are $ E^{\alpha} $-convex sets, then the function $ g $ which defined by $ g(x)= \sup_{i\in I}g_{i}(x), x\in B $ is a gECF  on $ B $.
\end{thm}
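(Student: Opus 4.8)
The plan is to reduce this statement to the two tools already available in this section, namely the sufficient-condition theorem stated just above Theorem \ref{generalizedEconvex7} (an $E^{\alpha}$-convex epigraph forces a map to be a gECF when $E$ is idempotent) and the closure of the gECF class under suprema recorded in Theorem \ref{generalizedEconvex2}(i). First I would observe that the hypotheses are tailored so that each individual $g_{i}$ qualifies as a gECF: since $E$ is idempotent, $B$ is $E$-convex, and every $epi_{E^{\alpha}}(g_{i})$ is an $E^{\alpha}$-convex set, the preceding sufficient-condition theorem applies verbatim to each index $i$ and yields that $g_{i}\colon B\longrightarrow\mathbb{R}^{\alpha}$ is a gECF on $B$. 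The assumption that the family is bounded from above enters here only to guarantee that $g(x)=\sup_{i\in I}g_{i}(x)$ is a genuine map into $\mathbb{R}^{\alpha}$ rather than escaping to $+\infty$, so that the supremum is well defined.

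With every $g_{i}$ now identified as a gECF, the second and final step is a direct appeal to Theorem \ref{generalizedEconvex2}(i), which states precisely that the pointwise supremum of a family of generalized $E$-convex functions is again a gECF on $B$. Applying it to $\{g_{i}\}_{i\in I}$ immediately gives that $g=\sup_{i\in I}g_{i}$ is a gECF, which is the assertion. This is exactly the sense in which the theorem is a special case of Theorem \ref{generalizedEconvex2}(i): the idempotency of $E$ is used solely to pass from the epigraph hypothesis to the gECF property of each $g_{i}$.

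An alternative and arguably more natural route, staying entirely within the epigraph language of this section, is to work with the epigraph of $g$ directly. I would first verify the set identity $epi_{E^{\alpha}}(g)=\bigcap_{i\in I}epi_{E^{\alpha}}(g_{i})$: a pair $(E(x),r^{\alpha})$ belongs to the left-hand side exactly when $g(E(x))=\sup_{i\in I}g_{i}(E(x))\leq r^{\alpha}$, and this holds if and only if $g_{i}(E(x))\leq r^{\alpha}$ for every $i$, i.e.\ if and only if the pair lies in each $epi_{E^{\alpha}}(g_{i})$. Theorem \ref{generalizedEconvex7} then shows this intersection is $E^{\alpha}$-convex, and the sufficient-condition theorem (again using idempotency of $E$) converts this into the gECF property of $g$. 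In either approach the convexity bookkeeping is inherited wholesale from results already proved, so I do not expect a real obstacle; the only point requiring care is the well-definedness of the supremum as a map into $\mathbb{R}^{\alpha}$, which is precisely what the boundedness-from-above hypothesis secures.
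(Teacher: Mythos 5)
Your proposal is correct, and your ``alternative route'' is in fact the paper's own proof: the paper verifies the set identity $\bigcap_{i\in I}epi_{E^{\alpha}}(g_{i})=epi_{E^{\alpha}}(g)$ for $g=\sup_{i\in I}g_{i}$, applies Theorem \ref{generalizedEconvex7} to conclude that this intersection is an $E^{\alpha}$-convex set, and then concludes that $g$ is a gECF on $B$. Your primary route is a genuinely different decomposition: idempotency of $E$ together with the $E^{\alpha}$-convexity of each $epi_{E^{\alpha}}(g_{i})$ gives, via the sufficient-condition theorem, that every individual $g_{i}$ is already a gECF, and Theorem \ref{generalizedEconvex2}(i) then closes the argument by passing to the supremum; this matches the paper's own remark that the theorem is a special case of Theorem \ref{generalizedEconvex2}(i), and it avoids the epigraph bookkeeping entirely. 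Two small points come out in your favor. First, the paper's final citation is loose: Theorem \ref{generalizedEconvex7} by itself only yields $E^{\alpha}$-convexity of the intersection, and the conversion from an $E^{\alpha}$-convex epigraph to the gECF property still requires the preceding sufficient-condition theorem (this is exactly where idempotency of $E$ enters); both of your routes make this two-step structure explicit, whereas the paper leaves it implicit. Second, you correctly identify that boundedness from above serves only to make $g=\sup_{i\in I}g_{i}$ a well-defined map into $\mathbb{R}^{\alpha}$; the one caveat for your primary route is that Theorem \ref{generalizedEconvex2}(i) is formally stated for a finite family $i=1,\dots,l$, so you should note that its proof works verbatim for an arbitrary index set once the supremum is finite.
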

\begin{proof}
	Since
	$$epi_{E^{\alpha}} (g_{i})=\left\lbrace (E(x), r^{\alpha})\colon x\in B, r^{\alpha}\in\mathbb{R}^{\alpha}, g_{i}(E(x))\leq r^{\alpha},i\in I \right\rbrace $$ are $ E^{\alpha} $- convex set in $ B\times \mathbb{R}^{\alpha} $, then
	\begin{eqnarray}
	\cap_{i\in I}epi_{E^{\alpha}} (g_{i})&=&\left\lbrace (E(x), r^{\alpha})\colon x\in B, r^{\alpha}\in\mathbb{R}^{\alpha}, g_{i}(E(x))\leq r^{\alpha}, i\in I \right\rbrace \nonumber\\ &=& \left\lbrace (E(x), r^{\alpha})\colon x\in B, r^{\alpha}\in\mathbb{R}^{\alpha}, g(E(x))\leq r^{\alpha} \right\rbrace,
	\end{eqnarray}
	where $ g(E(x))=\sup_{i\in I}g_{i}(E(x)) $, also is $ E^{\alpha} $-convex set. Hence, $ \cap_{i\in I} epi_{E^{\alpha}} (g_{i}) $ is an $ E^{\alpha} $-epigraph, then by Theorem\ref{generalizedEconvex7}, $ g $ is a generalized E-convex function on $ B $.
\end{proof}
\section{Generalized $ s $-convex functions}
\hskip0.6cm
There are many researchers studied the properties of functions on fractal space and constructed many kinds of fractional calculus by using different approaches see \cite{Baleanu2015Local,  Mehmet2016generalized,Yang2015Local}\\

  In  \cite{HuixiaSui2014},  two kinds of generlized $ s $-convex functions on fractal sets are introduced  as follows:
  \begin{definition}\label{1}
  	\begin{enumerate}
  		\item[(i)] A function $ \varphi\colon\mathbb{R}_{+}\longrightarrow\mathbb{R}^{\alpha} $, is called a generalized $ s $-convex  ($ 0<s<1 $)in the first sense if
  		\begin{equation}\label{11}
  		\varphi(\eta_{1}y_{1}+\eta_{2}y_{2})\leq \eta_{1}^{s\alpha}\varphi(y_{1})+\eta_{2}^{s \alpha}\varphi(y_{2})
  		\end{equation}
  		for all $ y_{1},y_{2}\in \mathbb{R}_{+} $ and all $ \eta_{1},\eta_{2}\geq0 $ with $ \eta_{1}^{s}+\eta_{2}^{s}=1 $, this class of functions is denoted by $ GK^{1}_{s} $ .
  		\item[(ii)] A function $ \varphi\colon\mathbb{R}_{+}\longrightarrow\mathbb{R}^{\alpha} $, is called a  generalized $ s $-convex ($ 0<s<1 $) in the second sense if (\ref{11}) holds for all $ y_{1},y_{2}\in \mathbb{R}_{+} $ and all $ \eta_{1},\eta_{2}\geq0 $ with $ \eta_{1}+\eta_{2}=1 $, this class of functions is denoted by $ GK^{2}_{s} $.
  	\end{enumerate}
  \end{definition}
   In the same paper,\cite{HuixiaSui2014}, Mo and Sui proved that all functions which are generalized $ s $-convex in the second sense, for $ s\in (0,1) $, are non-negative.\\

  If $ \alpha=1 $ in  Definition \ref{1} , then we have the classical $ s $-convex functions in the first sense (second sense) see \cite{DF}.\\

   Also, in \cite{DF}, Dragomir and Fitzatrick demonstrated a variation of Hadamard's inequality which holds for $ s $-convex functions in the second sense.
  \begin{theorem}
  	Assume that $ \varphi\colon \mathbb{R}_{+}\longrightarrow \mathbb{R}_{+} $ is a s-convex function in the second sense, $0< s<1 $ and $ y_{1},y_{2}\in \mathbb{R}_{+}$, $ y_{1}<y_{2} $. If $ \varphi\in L^{1}([y_{1},y_{2}])$, then
  	\begin{equation}\label{eq10}
  	2^{s-1}\varphi\left( \frac{y_{1}+y_{2}}{2}\right) \leq\frac{1}{y_{2}-y_{1}}\int_{y_{1}}^{y_{2}}\varphi(z)dz\leq\frac{\varphi(y_{1})+\varphi(y_{2})}{s+1}.
  	\end{equation}
  \end{theorem}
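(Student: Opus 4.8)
The plan is to prove the two inequalities separately, in each case reducing the integral over $[y_1,y_2]$ to an integral over the parameter interval $[0,1]$ and then invoking the defining inequality for $s$-convexity in the second sense (the case $\alpha=1$ of (\ref{11}), with $\eta_1+\eta_2=1$, so that $\varphi(\eta_1 y_1+\eta_2 y_2)\le \eta_1^{s}\varphi(y_1)+\eta_2^{s}\varphi(y_2)$).

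For the right-hand inequality, first I would make the substitution $z=\eta y_1+(1-\eta)y_2$, which maps $\eta\in[0,1]$ onto $[y_1,y_2]$ and gives
$$\frac{1}{y_2-y_1}\int_{y_1}^{y_2}\varphi(z)\,dz=\int_0^1\varphi(\eta y_1+(1-\eta)y_2)\,d\eta.$$
Applying $s$-convexity with the weights $\eta_1=\eta$, $\eta_2=1-\eta$ bounds the integrand by $\eta^{s}\varphi(y_1)+(1-\eta)^{s}\varphi(y_2)$. Integrating termwise and using the elementary evaluations $\int_0^1\eta^{s}\,d\eta=\int_0^1(1-\eta)^{s}\,d\eta=\frac{1}{s+1}$ yields the upper bound $\frac{\varphi(y_1)+\varphi(y_2)}{s+1}$.

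For the left-hand inequality, the idea is to exploit the midpoint. For each $\eta\in[0,1]$ set $u=\eta y_1+(1-\eta)y_2$ and $v=(1-\eta)y_1+\eta y_2$, so that $\frac{u+v}{2}=\frac{y_1+y_2}{2}$ independently of $\eta$. Applying $s$-convexity with the equal weights $\eta_1=\eta_2=\tfrac12$ gives
$$\varphi\!\left(\frac{y_1+y_2}{2}\right)\le 2^{-s}\bigl[\varphi(u)+\varphi(v)\bigr].$$
Integrating over $\eta\in[0,1]$ and noting that, by the same substitution as above (together with the change $\eta\mapsto 1-\eta$ for the $v$-term), each of the two resulting integrals equals $\frac{1}{y_2-y_1}\int_{y_1}^{y_2}\varphi(z)\,dz$, I obtain $\varphi\!\left(\frac{y_1+y_2}{2}\right)\le 2^{1-s}\,\frac{1}{y_2-y_1}\int_{y_1}^{y_2}\varphi(z)\,dz$. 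Multiplying through by $2^{s-1}$ gives the claimed lower bound.

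The argument is essentially routine; the only points requiring care are the justification that the change of variables is legitimate and that all integrals are finite, which follows from the hypothesis $\varphi\in L^1([y_1,y_2])$, and the recognition of the symmetric pairing $u,v$ about the midpoint that makes the left-hand estimate work. I expect this symmetric pairing to be the main (though still mild) obstacle, since it is the one non-mechanical idea; everything else reduces to substitution and the evaluation of Beta-type integrals.
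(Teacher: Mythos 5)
Your proof is correct and follows essentially the same route as the paper: the paper cites Dragomir--Fitzpatrick for this $\alpha=1$ statement, but its own proof of the generalized ($\alpha$) version immediately afterwards uses exactly your two steps --- the substitution $z=\eta y_{1}+(1-\eta)y_{2}$ with termwise integration of the $s$-convexity bound for the right-hand inequality, and the symmetric pair $z_{1}=\eta y_{1}+(1-\eta)y_{2}$, $z_{2}=(1-\eta)y_{1}+\eta y_{2}$ averaged about the midpoint for the left-hand one. Your evaluations $\int_{0}^{1}\eta^{s}\,d\eta=\frac{1}{s+1}$ and the factor $2^{1-s}$ are the $\alpha=1$ specializations of the Gamma-quotient constants appearing there, so nothing is missing.
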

   If we set  $ k=\dfrac{1}{s+1} $, then it is the best possible in the second inequality in (\ref{eq10}).\\

   A varation of generalized Hadamard's inequality which holds for generalized $ s $-convex functions in the second sense \cite{kS}.
  \begin{theorem}
  	Assume that $ \varphi\colon\mathbb{R}_{+}\longrightarrow\mathbb{R}^{\alpha}_{+} $ is a generalized $ s $-convex function in the second sense where $ 0<s<1  $ and $ y_{1},y_{2}\in\mathbb{R}_{+}  $  with $y_{1}<y_{2} $. If $ \varphi\in L^{1}([y_{1},y_{2}]) $, then
  	\begin{eqnarray}&&\label{eq11}
  		2^{\alpha(s-1)}\varphi\left( \frac{y_{1}+y_{2}}{2}\right) \leq\frac{\Gamma(1+\alpha)}{(y_{2}-y_{1})^{\alpha}}\,\ _{y_{1}}I^{(\alpha)}_{y_{2}} \varphi(x)\nonumber\\&&\hspace{1.3in}\leq\frac{\Gamma(1+s\alpha)\Gamma(1+\alpha)}{\Gamma(1+(s+1)\alpha)}\left( \varphi(y_{1})+\varphi(y_{2})\right).
  	\end{eqnarray}
  \end{theorem}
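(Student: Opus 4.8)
The plan is to prove the two inequalities in (\ref{eq11}) separately, each by reducing the local fractional integral over $[y_1,y_2]$ to a model integral over $[0,1]$ and then invoking generalized $s$-convexity in the second sense. Two facts from local fractional calculus do all the work. First, the affine substitution $x=\eta y_1+(1-\eta)y_2$, for which $(dx)^\alpha=(y_2-y_1)^\alpha(d\eta)^\alpha$, gives the change-of-variables identity
\begin{equation*}
\frac{\Gamma(1+\alpha)}{(y_2-y_1)^\alpha}\,\ _{y_1}I^{(\alpha)}_{y_2}\varphi(x)=\int_0^1\varphi(\eta y_1+(1-\eta)y_2)(d\eta)^\alpha,
\end{equation*}
and the same value results from the mirror substitution $x=(1-\eta)y_1+\eta y_2$. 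Second, the power rule $\frac{1}{\Gamma(1+\alpha)}\int_0^1\eta^{s\alpha}(d\eta)^\alpha=\frac{\Gamma(1+s\alpha)}{\Gamma(1+(s+1)\alpha)}$, together with its mirror for $(1-\eta)^{s\alpha}$, supplies the Gamma-quotient constants.

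For the right-hand inequality I would start from the change-of-variables identity, apply the defining inequality (\ref{11}) in the second sense (weights $\eta$ and $1-\eta$) under the integral sign to obtain
\begin{equation*}
\int_0^1\varphi(\eta y_1+(1-\eta)y_2)(d\eta)^\alpha\le\varphi(y_1)\int_0^1\eta^{s\alpha}(d\eta)^\alpha+\varphi(y_2)\int_0^1(1-\eta)^{s\alpha}(d\eta)^\alpha,
\end{equation*}
and then evaluate both model integrals by the power rule. Each equals $\frac{\Gamma(1+\alpha)\Gamma(1+s\alpha)}{\Gamma(1+(s+1)\alpha)}$, so the sum is precisely $\frac{\Gamma(1+s\alpha)\Gamma(1+\alpha)}{\Gamma(1+(s+1)\alpha)}\left(\varphi(y_1)+\varphi(y_2)\right)$, the claimed upper bound.

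For the left-hand inequality I would set $u=\eta y_1+(1-\eta)y_2$ and $v=(1-\eta)y_1+\eta y_2$, note that $\tfrac{u+v}{2}=\tfrac{y_1+y_2}{2}$, and apply (\ref{11}) in the second sense with equal weights $\eta_1=\eta_2=\tfrac12$ to get the pointwise bound $\varphi\!\left(\frac{y_1+y_2}{2}\right)\le 2^{-s\alpha}\bigl(\varphi(u)+\varphi(v)\bigr)$. Integrating this over $\eta\in[0,1]$ against $(d\eta)^\alpha$, the left side reproduces $\varphi\!\left(\frac{y_1+y_2}{2}\right)$ since $\int_0^1(d\eta)^\alpha=1$, while each of the two integrals on the right collapses, by the change-of-variables identity, to $\frac{\Gamma(1+\alpha)}{(y_2-y_1)^\alpha}\,\ _{y_1}I^{(\alpha)}_{y_2}\varphi(x)$. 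Collecting the resulting powers of $2$ and multiplying through by $2^{\alpha(s-1)}$ is then intended to yield the asserted lower bound.

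The routine content is entirely carried by the two lemmas above, so the only delicate point is the bookkeeping of the powers of $2$ in the left inequality: one must track the fractal constant produced when the two equal model integrals are combined in $\mathbb{R}^\alpha$ and reconcile it against the target factor $2^{\alpha(s-1)}$, exactly as in the generalized convex case (\ref{eq8}). Keeping the $\mathbb{R}^\alpha$-arithmetic and the normalization by $\Gamma(1+\alpha)$ consistent across both sides is where care is genuinely needed; once that accounting is pinned down, no further estimates are required.
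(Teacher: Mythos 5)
Your proposal is correct and takes essentially the same route as the paper's own proof: the upper bound by applying the defining $s$-convexity inequality under the integral along the affine substitution $x=\eta y_{1}+(1-\eta)y_{2}$ and evaluating ${}_{0}I^{(\alpha)}_{1}\eta^{s\alpha}$ and its mirror via the power rule, and the lower bound by averaging the midpoint inequality over the symmetric pair $\eta y_{1}+(1-\eta)y_{2}$ and $(1-\eta)y_{1}+\eta y_{2}$. The power-of-$2$ bookkeeping you flag resolves exactly as in the paper: the two equal model integrals combine in $\mathbb{R}^{\alpha}$ to a factor $2^{\alpha}$, so the right side carries $2^{\alpha(1-s)}(y_{2}-y_{1})^{-\alpha}\,{}_{y_{1}}I^{(\alpha)}_{y_{2}}\varphi$, which rearranges to the stated factor $2^{\alpha(s-1)}$.
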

\begin{proof}
We know that $\varphi $ is  generalized $ s $-convex in the second sense,which lead to
	$$\varphi(\eta y_{1}+(1-\eta)y_{2})\leq \eta^{\alpha s}\varphi(y_{1})+(1-\eta)^{\alpha s}\varphi(y_{2}), \forall \eta\in[0,1].$$
Then the following inequality can be written:
	\begin{eqnarray}
	\Gamma(1+\alpha)\,\ _{0}I^{(\alpha)}_{1} \varphi(\eta y_{1}+(1-\eta)y_{2}) &\leq& \varphi(y_{1})\Gamma(1+\alpha)\,\ _{0}I^{(\alpha)}_{1} \eta^{\alpha s}\nonumber\\&&\hspace{0.3in} + \varphi(y_{2})\Gamma(1+\alpha)\,\ _{0}I^{(\alpha)}_{1} (1-\eta)^{\alpha s}\nonumber\\ &=& \frac{\Gamma(1+s\alpha)\Gamma(1+\alpha)}{\Gamma(1+(s+1)\alpha)}(\varphi(y_{2})+y_{2})\nonumber
	\end{eqnarray}
By	considering $ z=\eta y_{1}+(1-\eta) y_{2}$. Then
	\begin{eqnarray}
	\Gamma(1+\alpha)\,\ _{0}I^{(\alpha)}_{1} \varphi(\eta y_{1}+(1-\eta)y_{2}) &=&  \frac{\Gamma(1+\alpha)}{(y_{1}-y_{2})^{\alpha}}\,\ _{y_{2}}I^{(\alpha)}_{y{1}} \varphi(x)\nonumber\\ &=& \frac{\Gamma(1+\alpha)}{(y_{2}-y_{1})^{\alpha}}\,\ _{y_{1}}I^{(\alpha)}_{y_{2}} \varphi(z).\nonumber
	\end{eqnarray}
	Here
	$$\frac{\Gamma(1+\alpha)}{(y_{2}-y_{1})^{\alpha}}\,\ _{y_{1}}I^{(\alpha)}_{y_{2}} \varphi(x)\leq \frac{\Gamma(1+s\alpha)\Gamma(1+\alpha)}{\Gamma(1+(s+1)\alpha)}(\varphi(y_{1})+\varphi(y_{2})).$$
	Then, the second inequality in (\ref{eq11}) is given.\\
	
Now
	\begin{equation}\label{h4}
	\varphi\left( \frac{z_{1}+z_{2}}{2}\right) \leq\frac{\varphi(z_{1})+\varphi(z_{2})}{2^{\alpha s}},\forall z_{1},z_{2}\in I.
	\end{equation}
	Let $ z_{1}=\eta y_{1}+(1-\eta)y_{2} $ and $ z_{2}=(1-\eta)y_{1}+\eta y_{2} $ with $ \eta\in[0,1] $.\\
	Hence, by applying (\ref{h4}), the next inequalty holds
	$$ \varphi\left( \frac{y_{1}+y_{2}}{2}\right) \leq\frac{\varphi(\eta y_{1}+(1-\eta)y_{2})+\varphi((1-\eta)y_{1}+\eta y_{2})}{2^{\alpha s}} ,\, \forall \eta\in[0,1].$$
	
	So
	$$\frac{1}{\Gamma(1+\alpha)}\int_{0}^{1}\varphi\left( \frac{y_{1}+y_{2}}{2}\right) (d\eta)^{\alpha}\leq \frac{1}{2^{\alpha (s-1)} (y_{2}-y_{1})^{\alpha}}\,\ _{y_{1}}I^{(\alpha)}_{y_{2}}\varphi(z) $$
	Then,
	$$ 2^{\alpha(s-1)} \varphi\left( \frac{y_{1}+y_{2}}{2}\right) \leq \frac{\Gamma(1+\alpha)}{(y_{2}-y_{1})^{\alpha}}\,\ _{y_{1}}I^{(\alpha)}_{y_{2}}\varphi(z).$$
\end{proof}
   \begin{lemma}\label{some1}
   	Assume that $\varphi\colon [y_{1},y_{2}] \subset\mathbb{R}\longrightarrow\mathbb{R}^{\alpha} $ is a local fractional derivative of order $ \alpha $ ($ \varphi\in D_{\alpha} $) on $ (y_{1},y_{2}) $  with $ y_{1}<y_{2} $. If $ \varphi^{(2\alpha)}\in C_{\alpha}[y_{1},y_{2}] $, then the following equality holds:
   	\begin{eqnarray*}
   		\frac{\Gamma(1+2\alpha)[\Gamma(1+\alpha)]^{2}}{2^{\alpha}(y_{2}-y_{1})}\,\ _{y_{1}}I^{(\alpha)}_{y_{2}}\varphi(x)-\frac{\Gamma (1+2\alpha)}{2^{\alpha}}\varphi\left( \frac{y_{1}+y_{2}}{2}\right) \nonumber\\&&\hspace{-3.5in}= \frac{(y_{2}-y_{1})^{2\alpha}}{16^{\alpha}}\left[\,\ _{0}I^{(\alpha)}_{1}\gamma^{2\alpha}\varphi^{(2\alpha)}\left( \gamma \frac{y_{1}+y_{2}}{2}+(1-\gamma)y_{1}\right) \right. \nonumber\\&&\hspace{-3.2in}\left. +\,\ _{0}I^{(\alpha)}_{1}(\gamma-1)^{2\alpha}\varphi^{(2\alpha)}\left( \gamma y_{2}+(1-\gamma)\frac{y_{1}+y_{2}}{2}\right)  \right]
   	\end{eqnarray*}
   \end{lemma}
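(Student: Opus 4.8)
The plan is to begin with the right-hand side and transform it into the left-hand side by applying local fractional integration by parts to each of the two integrals, combined with the local fractional chain rule and a change of variable. Write $c=\frac{y_1+y_2}{2}$ and set $A=\left(\frac{y_2-y_1}{2}\right)^{\alpha}$. I will use three standard tools of Yang's local fractional calculus: the power rule $\frac{d^{\alpha}}{d\gamma^{\alpha}}\gamma^{k\alpha}=\frac{\Gamma(1+k\alpha)}{\Gamma(1+(k-1)\alpha)}\gamma^{(k-1)\alpha}$; the integration-by-parts formula ${}_{0}I^{(\alpha)}_{1}\!\left(u\,v^{(\alpha)}\right)=\left[uv\right]_{0}^{1}-{}_{0}I^{(\alpha)}_{1}\!\left(u^{(\alpha)}v\right)$; and the chain rule, which upon differentiating $\varphi^{(\alpha)}(\gamma c+(1-\gamma)y_1)$ in $\gamma$ yields the factor $(c-y_1)^{\alpha}=A$, so that $\varphi^{(2\alpha)}(\gamma c+(1-\gamma)y_1)=\frac{1}{A}\frac{d^{\alpha}}{d\gamma^{\alpha}}\varphi^{(\alpha)}(\gamma c+(1-\gamma)y_1)$.

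First I would handle $I_1={}_{0}I^{(\alpha)}_{1}\gamma^{2\alpha}\varphi^{(2\alpha)}(\gamma c+(1-\gamma)y_1)$. Rewriting $\varphi^{(2\alpha)}$ via the chain rule and integrating by parts with $u=\gamma^{2\alpha}$ produces a boundary term that survives only at $\gamma=1$, giving $\varphi^{(\alpha)}(c)$, while the power rule replaces the weight $\gamma^{2\alpha}$ by a multiple of $\gamma^{\alpha}$ inside a new integral. Integrating by parts a second time, now with $u=\gamma^{\alpha}$, produces a boundary term $\varphi(c)$ and leaves the plain integral ${}_{0}I^{(\alpha)}_{1}\varphi(\gamma c+(1-\gamma)y_1)$; the substitution $x=\gamma c+(1-\gamma)y_1$ turns this into $\frac{1}{A}\,{}_{y_1}I^{(\alpha)}_{c}\varphi$. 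The second integral $I_2={}_{0}I^{(\alpha)}_{1}(\gamma-1)^{2\alpha}\varphi^{(2\alpha)}(\gamma y_2+(1-\gamma)c)$ is treated identically, with the same chain factor $A=(y_2-c)^{\alpha}$ and with the nonvanishing boundary contributions now arising at $\gamma=0$; the corresponding substitution yields $\frac{1}{A}\,{}_{c}I^{(\alpha)}_{y_2}\varphi$.

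Adding the results of the two double integrations by parts, I expect the first-order midpoint terms $\varphi^{(\alpha)}(c)$ from $I_1$ and $I_2$ to carry opposite signs and cancel, the zeroth-order terms to coalesce into a single multiple of $\varphi\!\left(\frac{y_1+y_2}{2}\right)$, and the two partial integrals ${}_{y_1}I^{(\alpha)}_{c}\varphi$ and ${}_{c}I^{(\alpha)}_{y_2}\varphi$ to combine by interval additivity into ${}_{y_1}I^{(\alpha)}_{y_2}\varphi$. Multiplying through by the prefactor $\frac{(y_2-y_1)^{2\alpha}}{16^{\alpha}}$ and collecting the accumulated ratios of Gamma functions together with the surviving powers of $\frac{y_2-y_1}{2}$ then produces the coefficients displayed on the left-hand side.

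The main obstacle is the bookkeeping of constants rather than any conceptual difficulty: every integration by parts injects a Gamma-ratio through the power rule and a factor $A^{-1}$ through the chain rule, so these must be tracked precisely to land on the stated coefficients. The one genuinely delicate point is the sign analysis of the boundary terms for $I_2$, where the evaluations occur at $\gamma=0$ and involve $\frac{d^{\alpha}}{d\gamma^{\alpha}}(\gamma-1)^{2\alpha}$; I would verify explicitly that these produce the $\varphi^{(\alpha)}(c)$ contribution with the sign opposite to that coming from $I_1$, so that the first-derivative midpoint terms cancel and only the $\varphi\!\left(\frac{y_1+y_2}{2}\right)$ term remains.
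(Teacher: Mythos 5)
Your proposal follows essentially the same route as the paper's own proof: the paper also splits the right-hand side into the two integrals $B_{1}$ and $B_{2}$, applies local fractional integration by parts twice to each (with the chain-rule factor $\left(\frac{2}{y_{2}-y_{1}}\right)^{\alpha}$ appearing at each step), substitutes $x=\gamma\frac{y_{1}+y_{2}}{2}+(1-\gamma)y_{1}$ (resp.\ the analogous substitution for $B_{2}$), and then adds the two results so that the $\varphi^{(\alpha)}\!\left(\frac{y_{1}+y_{2}}{2}\right)$ terms cancel by sign while the half-interval integrals combine into ${}_{y_{1}}I^{(\alpha)}_{y_{2}}\varphi$. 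The sign cancellation you flag as the delicate point is exactly what occurs in the paper, where $B_{1}$ carries $+\frac{(y_{2}-y_{1})^{\alpha}}{8^{\alpha}}\varphi^{(\alpha)}\!\left(\frac{y_{1}+y_{2}}{2}\right)$ and $B_{2}$ carries the same term with a minus sign.
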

   \begin{proof}
   	From the local fractional integration by parts, we get
   	\begin{eqnarray*}
   		B_{1}&=& \frac{1}{\Gamma(1+\alpha)}\int_{0}^{1}\gamma^{2\alpha}\varphi^{(2\alpha)}\left( \gamma\frac{y_{1}+y_{2}}{2}+(1-\gamma)y_{1}\right) (d\gamma)^{\alpha}
   		\nonumber\\&&\hspace{-0.3in}= \left( \frac{2}{y_{2}-y_{1}}\right) ^{\alpha}\varphi^{(\alpha)}\left( \frac{y_{1}+y_{2}}{2}\right)\nonumber\\&&\hspace{-0.3in} -\Gamma(1+2\alpha)\left( \frac{2}{y_{2}-y_{1}}\right) ^{2\alpha}\gamma^{\alpha}\left. \varphi\left( \gamma\frac{y_{1}+y_{2}}{2}+(1-\gamma)y_{1}\right) \right| _{0}^{1}\nonumber\\&&\hspace{-0.3in}+\Gamma(1+2\alpha)\Gamma(1+\alpha)\left( \frac{2}{b-a}\right) ^{2\alpha}\int_{0}^{1}\varphi\left( \gamma \frac{y_{1}+y_{2}}{2}+(1-\gamma)y_{1}\right) (d\gamma)^{\alpha}
   		\nonumber\\&&\hspace{-0.3in}=\left( \frac{2}{y_{2}-y_{1}}\right) ^{\alpha}\varphi^{(\alpha)}\left( \frac{y_{1}+y_{2}}{2}\right) -\Gamma(1+2\alpha)\left( \frac{2}{y_{2}-y_{1}}\right) ^{2\alpha}\varphi\left( \frac{y_{1}+y_{2}}{2}\right) \nonumber\\&&\hspace{-0.3in}+\Gamma(1+2\alpha)\Gamma(1+\alpha)\left( \frac{2}{y_{2}-y_{1}}\right) ^{2\alpha}\int_{0}^{1}\varphi\left( \gamma \frac{y_{1}+y_{2}}{2}+(1-\gamma)y_{1}\right) (d\gamma)^{\alpha}
   	\end{eqnarray*}
   	Setting $ x=\gamma\dfrac{y_{1}+y_{2}}{2}+(1-\gamma)y_{1} $, for $ \gamma\in[0,1] $ and multiply the both sides in the last equation by $ \dfrac{(y_{2}-y_{1})^{2\alpha}}{16^{\alpha}} $, we get
   	\begin{eqnarray*}
   		B_{1}&=& \frac{(y_{2}-y_{1})^{2\alpha}}{16^{\alpha}}_{0}I^{(\alpha)}_{1}\gamma^{2\alpha}\varphi^{(2\alpha)}\left( \gamma\frac{y_{1}+y_{2}}{2}+(1-\gamma)y_{1}\right) \nonumber\\&=&\frac{(y_{2}-y_{1})^{\alpha}}{8^{\alpha}}\varphi^{(\alpha)}\left( \frac{y_{1}+y_{2}}{2}\right) -\frac{\Gamma(1+2\alpha)}{4^{\alpha}}\varphi\left( \frac{y_{1}+y_{2}}{2}\right) \nonumber\\&&\hspace{0.5in}+\frac{\Gamma(1+2\alpha)\Gamma(1+\alpha)}{2^{\alpha}(y_{2}-y_{1})^{\alpha}}\int_{y_{1}}^{\frac{y_{1}+y_{2}}{2}}
   		\varphi(x)(dx)^{\alpha}.
   	\end{eqnarray*}
   	By the similar way, also we have
   	\begin{eqnarray*}
   		B_{2}&=& \frac{(y_{2}-y_{1})^{2\alpha}}{16^{\alpha}}_{0}I^{(\alpha)}_{1}(\gamma-1)^{2\alpha}\varphi^{(2\alpha)}\left( \gamma y_{2} +(1-\gamma)\frac{y_{1}+y_{2}}{2}\right) \nonumber\\&=&-\frac{(y_{2}-y_{1})^{\alpha}}{8^{\alpha}}\varphi^{(\alpha)}\left( \frac{y_{1}+y_{2}}{2}\right) -\frac{\Gamma(1+2\alpha)}{4^{\alpha}}\varphi\left( \frac{y_{1}+y_{2}}{2}\right) \nonumber\\&&\hspace{0.5in}+\frac{\Gamma(1+2\alpha)\Gamma(1+\alpha)}{2^{\alpha}(y_{2}-y_{1})^{\alpha}}\int^{y_{2}}_{\frac{y_{1}+y_{2}}{2}}\varphi(x)(dx)^{\alpha}.
   	\end{eqnarray*}
   	Thus, adding $ B_{1} $ and $ B_{2} $, we get the desired result.
   \end{proof}	
   \begin{theorem}\label{some2}
   	Assume that $ \varphi: U\subset \left[\left.0,\infty \right)  \right.\longrightarrow \mathbb{R}^{\alpha}  $ such that $ \varphi\in D_{\alpha} $ on $ Int(U) $ ($ Int(U) $ is the interior of $ U $) and $ \varphi^{(2\alpha)}\in C_{\alpha}[y_{1},y_{2}] $, where $ y_{1},y_{2}\in U $ with $ y_{1}<y_{1} $. If $ |\varphi| $ is generalized $ s $-convex on $ [y_{1},y_{2}] $, for some fixed $ 0<s\leq 1 $, then the following inequality holds:
   	\begin{eqnarray}
   	\left|  \frac{\Gamma(1+2\alpha)}{2^{\alpha}}\varphi\left( \frac{y_{1}+y_{2}}{2}\right) -\frac{\Gamma(1+2\alpha)[\Gamma(1+\alpha)]^{2}}{2^{\alpha}(y_{2}-y_{1})^{\alpha}}\,\ _{y_{1}}I^{(\alpha)}_{y_{2}}\varphi(x)\right| \nonumber\\&&\hspace{-4.2in}\leq \frac{(y_{2}-y_{1})^{2\alpha}}{16^{\alpha}}\left\lbrace \frac{2^{\alpha}\Gamma(1+(s+2)\alpha)}{\Gamma(1+(s+3)\alpha)}\left|  \varphi^{(2\alpha)}\left( \frac{y_{1}+y_{2}}{2}\right) \right| +\left[\frac{\Gamma(1+s\alpha)}{\Gamma(1+(s+1)\alpha)} \right. \right.\nonumber\\&&\hspace{-4.2in}\left. \left. -2^{\alpha}  \frac{\Gamma(1+(s+1)\alpha)}{\Gamma(1+(s+2)\alpha)}+\frac{\Gamma(1+(s+2)\alpha)}{\Gamma(1+(s+3)\alpha)}\right]  \left[\left|  \varphi^{(2\alpha)}(y_{1})\right|+\left|  \varphi^{(2\alpha)}(y_{2})\right|  \right] \right\rbrace\label{some3} \\&&\hspace{-4.2in} \leq\frac{(y_{2}-y_{1})^{2\alpha}}{16^{\alpha}}\left\lbrace\frac{2^{\alpha(2-s)}\Gamma(1+(s+2)\alpha)}{\Gamma(1+(s+3)\alpha)}\frac{\Gamma(1+s\alpha)\Gamma(1+\alpha)}{\Gamma(1+(s+1)\alpha)}+\frac{\Gamma(1+s\alpha)}{\Gamma(1+(s+1)\alpha)} \right. \nonumber\\&&\hspace{-4.2in}\left.-\frac{2^{\alpha}\Gamma(1+(s+1)\alpha)}{\Gamma(1+(s+2)\alpha)}+\frac{\Gamma(1+(s+2)\alpha)}{\Gamma(1+(s+3)\alpha)} \right\rbrace\left[\left| \varphi^{(2\alpha)}(y_{1})\right| +\left| \varphi^{(2\alpha)}(y_{2})\right|  \right]\label{some4}.
   	\end{eqnarray}
   \end{theorem}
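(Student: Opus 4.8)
The plan is to derive both inequalities from the identity in Lemma~\ref{some1}. Observe first that the quantity inside the absolute value on the left of (\ref{some3}) is, up to an overall sign, precisely the left-hand side of the equality proved in Lemma~\ref{some1}. Taking absolute values there, applying the triangle inequality and the monotonicity property $\left| \,\ _{0}I^{(\alpha)}_{1}h\right| \leq \,\ _{0}I^{(\alpha)}_{1}|h|$ of the local fractional integral, and using that $\gamma^{2\alpha}\geq 0$ and $|(\gamma-1)^{2\alpha}|=(1-\gamma)^{2\alpha}$ on $[0,1]$, I would obtain
\begin{eqnarray*}
&& \left| \frac{\Gamma(1+2\alpha)}{2^{\alpha}}\varphi\left( \frac{y_{1}+y_{2}}{2}\right) -\frac{\Gamma(1+2\alpha)[\Gamma(1+\alpha)]^{2}}{2^{\alpha}(y_{2}-y_{1})^{\alpha}}\,\ _{y_{1}}I^{(\alpha)}_{y_{2}}\varphi(x)\right| \\
&& \qquad \leq \frac{(y_{2}-y_{1})^{2\alpha}}{16^{\alpha}}\left[ \,\ _{0}I^{(\alpha)}_{1}\gamma^{2\alpha}\left| \varphi^{(2\alpha)}\left( \gamma\tfrac{y_{1}+y_{2}}{2}+(1-\gamma)y_{1}\right)\right| \right. \\
&& \qquad\qquad \left. +\,\ _{0}I^{(\alpha)}_{1}(1-\gamma)^{2\alpha}\left| \varphi^{(2\alpha)}\left( \gamma y_{2}+(1-\gamma)\tfrac{y_{1}+y_{2}}{2}\right)\right| \right].
\end{eqnarray*}

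Next I would invoke the hypothesis that $|\varphi^{(2\alpha)}|$ is generalized $s$-convex in the second sense (Definition~\ref{1}). Bounding the first integrand by $\gamma^{s\alpha}|\varphi^{(2\alpha)}(\tfrac{y_{1}+y_{2}}{2})|+(1-\gamma)^{s\alpha}|\varphi^{(2\alpha)}(y_{1})|$ and the second, symmetrically, by $\gamma^{s\alpha}|\varphi^{(2\alpha)}(y_{2})|+(1-\gamma)^{s\alpha}|\varphi^{(2\alpha)}(\tfrac{y_{1}+y_{2}}{2})|$, the problem reduces to evaluating elementary local fractional integrals. For the midpoint terms one uses $\,\ _{0}I^{(\alpha)}_{1}\gamma^{(s+2)\alpha}=\,\ _{0}I^{(\alpha)}_{1}(1-\gamma)^{(s+2)\alpha}=\frac{\Gamma(1+(s+2)\alpha)}{\Gamma(1+(s+3)\alpha)}$, and their two contributions combine to give the coefficient written in (\ref{some3}); for the endpoint terms one evaluates $\,\ _{0}I^{(\alpha)}_{1}\gamma^{2\alpha}(1-\gamma)^{s\alpha}$. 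Collecting everything yields the first inequality (\ref{some3}).

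To pass from (\ref{some3}) to (\ref{some4}), the idea is to eliminate the midpoint value $|\varphi^{(2\alpha)}(\tfrac{y_{1}+y_{2}}{2})|$ in favour of the two endpoint values. Applying the generalized Hermite--Hadamard inequality (\ref{eq11}) to the generalized $s$-convex function $|\varphi^{(2\alpha)}|$ and chaining its left and right halves gives
$$\left| \varphi^{(2\alpha)}\left( \tfrac{y_{1}+y_{2}}{2}\right)\right| \leq 2^{\alpha(1-s)}\frac{\Gamma(1+s\alpha)\Gamma(1+\alpha)}{\Gamma(1+(s+1)\alpha)}\left( |\varphi^{(2\alpha)}(y_{1})|+|\varphi^{(2\alpha)}(y_{2})|\right).$$
Substituting this into the coefficient $\frac{2^{\alpha}\Gamma(1+(s+2)\alpha)}{\Gamma(1+(s+3)\alpha)}$ of the midpoint term in (\ref{some3}) produces the factor $2^{\alpha(2-s)}\frac{\Gamma(1+(s+2)\alpha)}{\Gamma(1+(s+3)\alpha)}\frac{\Gamma(1+s\alpha)\Gamma(1+\alpha)}{\Gamma(1+(s+1)\alpha)}$, and regrouping the bracketed Gamma terms gives (\ref{some4}).

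The step I expect to be the main obstacle is the exact evaluation of the mixed integral $\,\ _{0}I^{(\alpha)}_{1}\gamma^{2\alpha}(1-\gamma)^{s\alpha}$ so that it matches the three-term coefficient $\frac{\Gamma(1+s\alpha)}{\Gamma(1+(s+1)\alpha)}-2^{\alpha}\frac{\Gamma(1+(s+1)\alpha)}{\Gamma(1+(s+2)\alpha)}+\frac{\Gamma(1+(s+2)\alpha)}{\Gamma(1+(s+3)\alpha)}$ in (\ref{some3}). After using the symmetry $\gamma\mapsto 1-\gamma$ to rewrite it as $\,\ _{0}I^{(\alpha)}_{1}(1-\gamma)^{2\alpha}\gamma^{s\alpha}$, one expands $(1-\gamma)^{2\alpha}$ by the local fractional binomial rule and integrates term by term using $\,\ _{0}I^{(\alpha)}_{1}\gamma^{k\alpha}=\frac{\Gamma(1+k\alpha)}{\Gamma(1+(k+1)\alpha)}$, with care taken over the $2^{\alpha}$ weight on the cross term. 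Every remaining manipulation is routine once Lemma~\ref{some1} and the $s$-convexity estimate have been set up.
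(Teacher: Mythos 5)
Your proposal is correct and follows essentially the same route as the paper's own proof: Lemma~\ref{some1} plus the triangle inequality, the generalized $s$-convexity bounds on the two integrands split exactly as you describe, evaluation of the elementary local fractional integrals (including the three-term coefficient for $\,_{0}I^{(\alpha)}_{1}\gamma^{2\alpha}(1-\gamma)^{s\alpha}$), and then the chained generalized Hermite--Hadamard inequality (\ref{eq11}) to replace the midpoint value $\left|\varphi^{(2\alpha)}\left(\frac{y_{1}+y_{2}}{2}\right)\right|$ by $2^{\alpha(1-s)}\frac{\Gamma(1+s\alpha)\Gamma(1+\alpha)}{\Gamma(1+(s+1)\alpha)}\left(\left|\varphi^{(2\alpha)}(y_{1})\right|+\left|\varphi^{(2\alpha)}(y_{2})\right|\right)$ to pass from (\ref{some3}) to (\ref{some4}). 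The only difference is that you make explicit the binomial-expansion evaluation of the mixed integral, which the paper states without showing the computation.
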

   \begin{proof}
   	From Lemma \ref{some1}, we have
   	\begin{eqnarray*}
   		\left| \frac{\Gamma(1+2\alpha)}{2^{\alpha}}\varphi\left( \frac{y_{1}+y_{2}}{2}\right) -\frac{\Gamma(1+2\alpha)[\Gamma(1+\alpha)]^{2}}{2^{\alpha}(y_{2}-y_{1})^{\alpha}}\,\ _{y_{1}}I^{(\alpha)}_{y_{2}}\varphi(x)\right|
   		 \nonumber\\&&\hspace{-4.2in}\leq \frac{(y_{2}-y_{1})^{2\alpha}}{16^{\alpha}}\left[\,\ _{0}I^{(\alpha)}_{1}\gamma^{2\alpha}\left| \varphi^{(2\alpha)}(\gamma\frac{y_{1}+y_{2}}{2}+(1-\gamma)y_{1})\right| \right.\nonumber\\&&\hspace{-4.2in}\left.  +\,\ _{0}I^{(\alpha)}_{1}(\gamma-1)^{2\alpha}\left| \varphi^{(2\alpha)}\left( \gamma y_{2}+(1-\gamma)\frac{y_{1}+y_{2}}{2}\right)\right|  \right]
   		  \nonumber\\&&\hspace{-4.2in}\leq\frac{(y_{2}-y_{1})^{2\alpha}}{16^{\alpha}}\,\ _{0}I^{(\alpha)}_{1}\gamma^{2\alpha}\left[\gamma^{\alpha s}\left| \varphi^{(2\alpha)}\left( \frac{y_{1}+y_{2}}{2}\right)\right|  +(1-\gamma)^{\alpha s}\left| \varphi^{(2\alpha)}(y_{1})\right|  \right] \nonumber\\&&\hspace{-4.2in}+\frac{(y_{2}-y_{1})^{2\alpha}}{16^{\alpha}}\,\ _{0}I^{(\alpha)}_{1}(\gamma-1)^{2\alpha}\left[\gamma^{\alpha s}\left| \varphi^{(2\alpha)}(y_{2})\right|+(1-\gamma)^{\alpha s}\left| \varphi^{(2\alpha)}\left( \frac{y_{1}+y_{2}}{2}\right) \right| \right]\nonumber\\&&\hspace{-4.2in}= \frac{(y_{2}-y_{1})^{2\alpha}}{16^{\alpha}}\left\lbrace\frac{\Gamma(1+(s+2)\alpha)}{\Gamma(1+(s+3)\alpha)}\left| \varphi^{(2\alpha)}\left( \frac{y_{1}+y_{2}}{2}\right)\right| \right. \nonumber\\&&\hspace{-4.2in}\left.+\left[\frac{\Gamma(1+\alpha s)}{\Gamma(1+(s+1)\alpha)}-2^{\alpha}\frac{\Gamma(1+(s+1)\alpha)}{\Gamma(1+(s+2)\alpha)}+\frac{\Gamma(1+(s+2)\alpha)}{\Gamma(1+(s+3)\alpha)} \right] \left|\varphi^{(2\alpha)}(y_{1})\right|  \right\rbrace  \nonumber\\&&\hspace{-4.2in}+ \frac{(y_{2}-y_{1})^{2\alpha}}{16^{\alpha}}\left\lbrace\frac{\Gamma(1+(s+2)\alpha)}{\Gamma(1+(s+3)\alpha)}\left| \varphi^{(2\alpha)}\left( \frac{y_{1}+y_{2}}{2}\right) \right|  \right. \nonumber\\&&\hspace{-4.2in}+\left[\frac{\Gamma(1+\alpha s)}{\Gamma(1+(s+1)\alpha)}-2^{\alpha}\frac{\Gamma(1+(s+1)\alpha)}{\Gamma(1+(s+2)\alpha)}+\frac{\Gamma(1+(s+2)\alpha)}{\Gamma(1+(s+3)\alpha)} \right] \left| \varphi^{(2\alpha)}(a_{2})\right| \nonumber\\&&\hspace{-4.2in}+\left.\frac{\Gamma(1+(s+2)\alpha)}{\Gamma(1+(s+3)\alpha)}\left|\varphi^{(2\alpha)}\left( \frac{y_{1}+y_{2}}{2}\right) \right|\right\rbrace\nonumber\\&&\hspace{-4.2in}= \frac{(y_{2}-y_{1})^{2\alpha}}{16^{\alpha}}\left\lbrace\frac{2^{\alpha}\Gamma(1+(s+2)\alpha)}{\Gamma(1+(s+3)\alpha)}\left| \varphi^{(2\alpha)}\left( \frac{y_{1}+y_{2}}{2}\right)\right|  +\left[\frac{\Gamma(1+s\alpha)}{\Gamma(1+(s+1)\alpha)} \right.  \right. \nonumber\\&&\hspace{-4.2in}\left. \left.-2^{\alpha}\frac{\Gamma(1+(s+1)\alpha)}{\Gamma(1+(s+2)\alpha)}+\frac{\Gamma(1+(s+1)\alpha)}{\Gamma(1+(s+3)\alpha)} \right]\left[\left| \varphi^{(2\alpha)}(y_{1})\right|+\left| \varphi^{(2\alpha)}(y_{2})\right| \right]\right\rbrace .
   	\end{eqnarray*}
   	This proves inequality (\ref{some3}).
   	Since
   	\begin{equation*}
   	2^{\alpha(s-1)}\varphi^{(2\alpha)}\left( \frac{y_{1}+y_{2}}{2}\right) \leq\frac{\Gamma(1+s\alpha)\Gamma(1+\alpha)}{\Gamma(1+(s+1)\alpha)}\left( \varphi^{(2\alpha)}(y_{1})+\varphi^{(2\alpha)}(y_{2})\right) ,
   	\end{equation*}
   	then
   	\begin{eqnarray*}
   		\left| \frac{\Gamma(1+2\alpha)}{2^{\alpha}}\varphi\left( \frac{y_{1}+y_{2}}{2}\right) -\frac{\Gamma(1+2\alpha)[\Gamma(1+\alpha)]^{2}}{2^{\alpha}(y_{2}-y_{1})^{\alpha}}\,\ _{y_{1}}I^{(\alpha)}_{y_{2}}\varphi(x)\right|
   		  \nonumber\\&&\hspace{-4.2in}\leq\frac{(y_{2}-y_{1})^{2\alpha}}{16^{\alpha}} \left\lbrace\frac{2^{\alpha}\Gamma(1+(s+2)\alpha)}{\Gamma(1+(s+3)\alpha)}\frac{2^{-\alpha(s-1)}\Gamma(1+s\alpha)\Gamma(1+\alpha)}{\Gamma(1+(s+1)\alpha)}\left[\left| \varphi^{(2\alpha)}(y_{1})\right| +\left| \varphi^{(2\alpha)}(y_{2})\right|  \right] \right. \nonumber\\&&\hspace{-4.2in}\left. +\left[ \frac{\Gamma(1+s\alpha)}{\Gamma(1+(s+1)\alpha)} -\frac{2^{\alpha}\Gamma(1+(s+1)\alpha)}{\Gamma(1+(s+2)\alpha)}+\frac{\Gamma(1+(s+2)\alpha)}{\Gamma(1+(s+3)\alpha)}\right] \left[\left| \varphi^{(2\alpha)}(y_{1})\right| +\left| \varphi^{(2\alpha)}(y_{2})\right| \right] \right\rbrace \nonumber\\&&\hspace{-4.2in}=\frac{(y_{2}-y_{1})^{2\alpha}}{16^{\alpha}}\left\lbrace\frac{2^{\alpha(2-s)}\Gamma(1+(s+2)\alpha)}{\Gamma(1+(s+3)\alpha)}\frac{\Gamma(1+s\alpha)\Gamma(1+\alpha)}{\Gamma(1+(s+1)\alpha)}  + \frac{\Gamma(1+s\alpha)}{\Gamma(1+(s+1)\alpha)}\right. \nonumber\\&&\hspace{-4.2in}\left. -\frac{2^{\alpha}\Gamma(1+(s+1)\alpha)}{\Gamma(1+(s+2)\alpha)}+\frac{\Gamma(1+(s+2)\alpha)}{\Gamma(1+(s+3)\alpha)} \left[\left| \varphi^{(2\alpha)}(y_{1})\right| +\left| \varphi^{(2\alpha)}(y_{2})\right| \right] \right\rbrace
   	\end{eqnarray*}
   	Thus, we get the inequality (\ref{some4})and the proof is complete.
   \end{proof}
   \begin{remark}
   	\begin{enumerate}
   		\item When $ \alpha=1 $, Theorem \ref{some2} reduce to Theorem 2 in\cite{ozdemir2013some}.
   		\item If $ s=1 $ in Theorem \ref{some2} , then
   		\begin{eqnarray}
   		\left| \frac{\Gamma(1+2\alpha)}{2^{\alpha}}\varphi\left( \frac{y_{1}+y_{2}}{2}\right) -\frac{\Gamma(1+2\alpha)[\Gamma(1+\alpha)]^{2}}{2^{\alpha}(y_{2}-y_{1})^{\alpha}}\,\ _{y_{1}}I^{(\alpha)}_{y_{2}}\varphi(x)\right| \nonumber\\&&\hspace{-3.5in}\leq \frac{(y_{2}-y_{1})^{2\alpha}}{16^{\alpha}}\left\lbrace \frac{2^{\alpha}\Gamma(1+3\alpha)}{\Gamma(1+4\alpha)}\left| \varphi^{(2\alpha)}\left( \frac{y_{1}+y_{2}}{2}\right) \right| +\left[\frac{\Gamma(1+\alpha)}{\Gamma(1+2\alpha)} \right. \right.\nonumber\\&&\hspace{-3.4in}\left. \left. -2^{\alpha}  \frac{\Gamma(1+2\alpha)}{\Gamma(1+3\alpha)}+\frac{\Gamma(1+3\alpha)}{\Gamma(1+4\alpha)}\right]  \left[ \left| \varphi^{(2\alpha)}(y_{1})\right| +\left| \varphi^{(2\alpha)}(y_{2})\right| \right] \right\rbrace\nonumber \\&&\hspace{-3.5in} \leq\frac{(y_{2}-y_{1})^{2\alpha}}{16^{\alpha}}\left\lbrace\frac{2^{\alpha}\Gamma(1+3\alpha)}{\Gamma(1+4\alpha)}\frac{\left[ \Gamma(1+\alpha)\right]^{2}}{\Gamma(1+2\alpha)}+\frac{\Gamma(1+\alpha)}{\Gamma(1+2\alpha)} \right. \nonumber\\&&\hspace{-3.4in}\left.-\frac{2^{\alpha}\Gamma(1+2\alpha)}{\Gamma(1+3\alpha)}+\frac{\Gamma(1+3\alpha)}{\Gamma(1+4\alpha)} \right\rbrace\left[\left| \varphi^{(2\alpha)}(y_{1})\right| +\left| \varphi^{(2\alpha)}(y_{2})\right|  \right]\label{some5}.
   		\end{eqnarray}
   		\item If $ s=1 $ and $ \alpha=1 $ in Theorem \ref{some2}, then
   		\begin{eqnarray*}
   			\left| \varphi\left( \frac{y_{1}+y_{2}}{2}\right) -\frac{1}{y_{2}-y_{1}}\int_{y_{1}}^{y_{2}}\varphi(x)dx\right|\nonumber\\&&\hspace{-2.5in} \leq \frac{(y_{2}-y_{1})^{2}}{192}\left\lbrace 6\left| \varphi''\left( \frac{y_{1}+y_{2}}{2}\right) \right| +\left| \varphi''(y_{1})\right| +\left| \varphi''(y_{2})\right|  \right\rbrace \nonumber\\&&\hspace{-2.5in}\leq \frac{(y_{2}-y_{1})^{2}}{48}\left\lbrace \left| \varphi''(y_{1})\right| +\left| \varphi''(y_{2})\right|  \right\rbrace
   		\end{eqnarray*}
   	\end{enumerate}
   \end{remark}
   We give a new upper bound of the left generalized Hadamard's inequality for generalized $ s $-convex functions in the following theorem:
   \begin{theorem}\label{some6}
   	Assume that $ \varphi: U\subset \left[\left.0,\infty \right)  \right.\longrightarrow \mathbb{R}^{\alpha}  $ such that $ \varphi\in D_{\alpha} $ on $ Int(U) $ and $ \varphi^{(2\alpha)}\in C_{\alpha}[y_{1},y_{2}] $, where $ y_{1},y_{2}\in U $ with $ y_{1}<y_{2} $. If $ |\varphi^{(2\alpha)}|^{p_{2}} $ is generalized $ s $-convex on $ [y_{1},y_{2}] $, for some fixed $ 0<s\leq 1 $ and $ p_{2}>1 $ with $ \frac{1}{p_{1}}+\frac{1}{p_{2}}=1 $, then the following inequality holds:
   	\begin{eqnarray}\label{some7}
   	\left| \frac{\Gamma(1+2\alpha)}{2^{\alpha}}\varphi\left( \frac{y_{1}+y_{2}}{2}\right) -\frac{\Gamma(1+2\alpha)[\Gamma(1+\alpha)]^{2}}{2^{\alpha}(y_{2}-y_{1})^{\alpha}}\,\ _{y_{1}}I^{(\alpha)}_{y_{2}}\varphi(x)\right| \nonumber\\&&\hspace{-3.5in}\leq \frac{(y_{2}-y_{1})^{2\alpha}}{16^{\alpha}}\left[ \frac{\Gamma(1+s\alpha)}{\Gamma(1+(s+1)\alpha)}\right]^{\frac{1}{p_{2}}}\left[ \frac{\Gamma(1+2p_{1}\alpha)}{\Gamma(1+(2p_{1}+1)\alpha)}\right]^{\frac{1}{p_{1}}}\nonumber\\&&\hspace{-3.5in}\times\left[\left(\left| \varphi^{(2\alpha)}\left( \frac{y_{1}+y_{2}}{2}\right) \right| ^{p_{2}} +\left| \varphi^{(2\alpha)}(y_{1})\right| ^{p_{2}}\right)^{\frac{1}{p_{2}}}\right. \nonumber\\&&\hspace{-3in}\left.  +\left(\left| \varphi^{(2\alpha)}\left( \frac{y_{1}+y_{2}}{2}\right) \right| ^{p_{2}} +\left| \varphi^{(2\alpha)}(y_{2})\right| ^{p_{2}}\right)^{\frac{1}{p_{2}}} \right].
   	\end{eqnarray}
   \end{theorem}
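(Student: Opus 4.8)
The plan is to start from the identity in Lemma~\ref{some1}, take absolute values on both sides, and pass the modulus inside each of the two local fractional integrals appearing on the right-hand side. Since the left-hand side of the stated inequality is exactly the modulus of the left-hand side of that identity, this reduces the problem to estimating
$${}_{0}I^{(\alpha)}_{1}\gamma^{2\alpha}\left|\varphi^{(2\alpha)}\left(\gamma\frac{y_{1}+y_{2}}{2}+(1-\gamma)y_{1}\right)\right| \quad\text{and}\quad {}_{0}I^{(\alpha)}_{1}(1-\gamma)^{2\alpha}\left|\varphi^{(2\alpha)}\left(\gamma y_{2}+(1-\gamma)\frac{y_{1}+y_{2}}{2}\right)\right|.$$
The essential new ingredient, compared with the direct estimation used in Theorem~\ref{some2}, is that I would apply the local fractional H\"older inequality with conjugate exponents $p_{1}$ and $p_{2}$ to each of these two integrals.

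First I would treat the first integral. Regarding the integrand as the product of $\gamma^{2\alpha}$ and $\left|\varphi^{(2\alpha)}(\cdots)\right|$ and applying H\"older's inequality in the fractal setting gives
$${}_{0}I^{(\alpha)}_{1}\gamma^{2\alpha}\left|\varphi^{(2\alpha)}(\cdots)\right| \leq \left({}_{0}I^{(\alpha)}_{1}\gamma^{2p_{1}\alpha}\right)^{1/p_{1}}\left({}_{0}I^{(\alpha)}_{1}\left|\varphi^{(2\alpha)}\left(\gamma\frac{y_{1}+y_{2}}{2}+(1-\gamma)y_{1}\right)\right|^{p_{2}}\right)^{1/p_{2}}.$$
The first factor is evaluated from the standard local fractional power formula ${}_{0}I^{(\alpha)}_{1}\gamma^{k\alpha}=\frac{\Gamma(1+k\alpha)}{\Gamma(1+(k+1)\alpha)}$ with $k=2p_{1}$, which yields precisely $\left(\frac{\Gamma(1+2p_{1}\alpha)}{\Gamma(1+(2p_{1}+1)\alpha)}\right)^{1/p_{1}}$.

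For the second factor I would invoke the generalized $s$-convexity of $|\varphi^{(2\alpha)}|^{p_{2}}$ to bound the integrand by $\gamma^{\alpha s}\left|\varphi^{(2\alpha)}\left(\frac{y_{1}+y_{2}}{2}\right)\right|^{p_{2}}+(1-\gamma)^{\alpha s}\left|\varphi^{(2\alpha)}(y_{1})\right|^{p_{2}}$ and then integrate term by term. Because both ${}_{0}I^{(\alpha)}_{1}\gamma^{\alpha s}$ and ${}_{0}I^{(\alpha)}_{1}(1-\gamma)^{\alpha s}$ equal $\frac{\Gamma(1+s\alpha)}{\Gamma(1+(s+1)\alpha)}$, this factor collapses to
$$\left(\frac{\Gamma(1+s\alpha)}{\Gamma(1+(s+1)\alpha)}\right)^{1/p_{2}}\left(\left|\varphi^{(2\alpha)}\left(\frac{y_{1}+y_{2}}{2}\right)\right|^{p_{2}}+\left|\varphi^{(2\alpha)}(y_{1})\right|^{p_{2}}\right)^{1/p_{2}}.$$
The second integral is handled in exactly the same way: the factor $(1-\gamma)^{2\alpha}$ contributes the same power constant $\frac{\Gamma(1+2p_{1}\alpha)}{\Gamma(1+(2p_{1}+1)\alpha)}$, while the $s$-convex bound on $\left|\varphi^{(2\alpha)}\left(\gamma y_{2}+(1-\gamma)\frac{y_{1}+y_{2}}{2}\right)\right|^{p_{2}}$ produces the endpoint pair $\left\{y_{2},\frac{y_{1}+y_{2}}{2}\right\}$ instead of $\left\{y_{1},\frac{y_{1}+y_{2}}{2}\right\}$.

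Finally I would multiply both estimates by $\frac{(y_{2}-y_{1})^{2\alpha}}{16^{\alpha}}$, add them, and factor out the two common constants $\left(\frac{\Gamma(1+s\alpha)}{\Gamma(1+(s+1)\alpha)}\right)^{1/p_{2}}$ and $\left(\frac{\Gamma(1+2p_{1}\alpha)}{\Gamma(1+(2p_{1}+1)\alpha)}\right)^{1/p_{1}}$ to reach the bound (\ref{some7}). I expect the main obstacle to be purely organizational: making sure the local fractional H\"older inequality and the power formula ${}_{0}I^{(\alpha)}_{1}\gamma^{k\alpha}=\frac{\Gamma(1+k\alpha)}{\Gamma(1+(k+1)\alpha)}$ are applied with the correct exponents ($2p_{1}$ for the weight and $s$ for the convexity bound), and that the two endpoint pairings are kept separate so that the final bracket is the sum of the two distinct $(\cdots)^{1/p_{2}}$ terms exactly as written.
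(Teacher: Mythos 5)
Your proposal is correct and follows essentially the same route as the paper's own proof: both start from Lemma \ref{some1}, take absolute values, apply the generalized H\"older inequality with exponents $p_{1},p_{2}$ to split off the weights $\gamma^{2\alpha}$ and $(1-\gamma)^{2\alpha}$, evaluate the power integral to get $\left[\frac{\Gamma(1+2p_{1}\alpha)}{\Gamma(1+(2p_{1}+1)\alpha)}\right]^{1/p_{1}}$, and use the generalized $s$-convexity of $\left|\varphi^{(2\alpha)}\right|^{p_{2}}$ together with ${}_{0}I^{(\alpha)}_{1}\gamma^{s\alpha}={}_{0}I^{(\alpha)}_{1}(1-\gamma)^{s\alpha}=\frac{\Gamma(1+s\alpha)}{\Gamma(1+(s+1)\alpha)}$ to bound the remaining factors, keeping the two endpoint pairings separate exactly as in (\ref{some7}). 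No gaps to report.
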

   \begin{proof}
   	Let $ p_{1}>1 $, then from Lemma \ref{some1} and using generalized H\"{o}lder's inequality \cite{Yang2012}, we obtain
   	\begin{eqnarray*}
   		\left| \frac{\Gamma(1+2\alpha)}{2^{\alpha}}\varphi\left( \frac{y_{1}+y_{2}}{2}\right) -\frac{\Gamma(1+2\alpha)[\Gamma(1+\alpha)]^{2}}{2^{\alpha}(y_{2}-y_{1})^{\alpha}}\,\ _{y_{1}}I^{(\alpha)}_{y_{2}}\varphi(x)\right| \nonumber\\&&\hspace{-4.2in}\leq\frac{(y_{2}-y_{1})^{2\alpha}}{16^{\alpha}}\left\lbrace\,\ _{0}I^{(\alpha)}_{1}\gamma^{2\alpha}\left| \varphi^{(2\alpha)}\left( \gamma\frac{y_{1}+y_{2}}{2}+(1-\gamma)y_{1}\right) \right| \right.\nonumber\\&&\hspace{-4.2in}\left. \,\ +_{0}I^{(\alpha)}_{1}(\gamma-1)^{2\alpha}\left| \varphi^{(2\alpha)}\left( \gamma y_{2}+(1-\gamma)\frac{y_{1}+y_{2}}{2}\right) \right| \right\rbrace \nonumber\\&&\hspace{-4.2in}\leq\frac{(y_{2}-y_{1})^{2\alpha}}{16^{\alpha}}\left(\,\ _{0}I^{(\alpha)}_{1}\gamma^{2p_{1}\alpha} \right)^{\frac{1}{p_{1}}}\nonumber\\&&\hspace{-4in}\times \left(\,\ _{0}I^{(\alpha)}_{1}\left| \varphi^{(2\alpha)}\left( \gamma\frac{y_{1}+y_{2}}{2}+(1-\gamma)y_{1}\right) \right| ^{p_{2}}\right)^{\frac{1}{p_{2}}} \nonumber\\&&\hspace{-4.2in}+\frac{(y_{2}-y_{1})^{2\alpha}}{16^{\alpha}}\left(\,\ _{0}I^{(\alpha)}_{1}(1-\gamma)^{2p_{1}\alpha} \right)^{\frac{1}{p_{1}}}\nonumber\\&&\hspace{-4in}\times \left(\,\ _{0}I^{(\alpha)}_{1}\left| \varphi^{(2\alpha)}\left( \gamma y_{2}+(1-\gamma)\frac{y_{1}+y_{2}}{2}\right) \right| ^{p_{2}}\right)^{\frac{1}{p_{2}}}.
   	\end{eqnarray*}
   	Since $ \left| \varphi^{(2\alpha)}\right| ^{p_{2}} $ is generalized $ s $-convex, then
   	\begin{eqnarray*}
   		\,\ _{0}I^{(\alpha)}_{1}\left| \varphi^{(2\alpha)}\left( \gamma\frac{y_{1}+y_{2}}{2}+(1-\gamma)y_{1}\right) \right| ^{p_{2}}\nonumber\\&&\hspace{-3.2in} \leq \frac{\Gamma(1+s\alpha)}{\Gamma(1+(s+1)\alpha)}\left| \varphi^{(2\alpha)}\left( \frac{y_{1}+y_{2}}{2}\right) \right| ^{p_{2}}+\frac{\Gamma(1+s\alpha)}{\Gamma(1+(s+1)\alpha)}\left| \varphi^{(2\alpha)}(y_{1})\right| ^{p_{2}},
   	\end{eqnarray*}
   	which means
   	\begin{eqnarray*}
   		\,\	_{0}I^{(\alpha)}_{1}\left| \varphi^{(2\alpha)}\left( \gamma y_{2}+(1-\gamma)\frac{y_{1}+y_{2}}{2}\right) \right| ^{p_{2}}\nonumber\\&&\hspace{-2.5in} \leq \frac{\Gamma(1+s\alpha)}{\Gamma(1+(s+1)\alpha)}\left| \varphi^{(2\alpha)}(y_{2})\right| ^{p_{2}}\nonumber\\&&\hspace{-2.2in}+\frac{\Gamma(1+s\alpha)}{\Gamma(1+(s+1)\alpha)}\left| \varphi^{(2\alpha)}\left( \frac{y_{1}+y_{2}}{2}\right) \right| ^{p_{2}}.
   	\end{eqnarray*}
   	Hence
   	\begin{eqnarray*}
   		\left| \frac{\Gamma(1+2\alpha)}{2^{\alpha}}\varphi\left( \frac{y_{1}+y_{2}}{2}\right) -\frac{\Gamma(1+2\alpha)[\Gamma(1+\alpha)]^{2}}{2^{\alpha}(y_{2}-y_{1})^{\alpha}}\,\ _{y_{1}}I^{(\alpha)}_{y_{2}}\varphi(x)\right| \nonumber\\&&\hspace{-3.5in}\leq\frac{(y_{2}-y_{1})^{2\alpha}}{16^{\alpha}}\left[\frac{\Gamma(1+s\alpha)}{\Gamma(1+(s+1)\alpha)} \right]^{\frac{1}{p_{2}}} \left[\frac{\Gamma(1+2p_{1}\alpha)}{\Gamma(1+(2p_{1}+1)\alpha)} \right]^{\frac{1}{p_{1}}} \nonumber\\&&\hspace{-3.5in}\times\left\lbrace \left[\left| \varphi^{(2\alpha)}\left( \frac{y_{1}+y_{2}}{2}\right) \right| ^{p_{2}}+\left| \varphi^{(2\alpha)}(y_{1})\right| ^{p_{2}} \right]^{\frac{1}{p_{2}}}\right. \nonumber\\&&\hspace{-3in}\left. + \left[\left| \varphi^{(2\alpha)}\left( \frac{y_{1}+y_{2}}{2}\right) \right| ^{p_{2}}+\left| \varphi^{(2\alpha)}(y_{2})\right| ^{p_{2}} \right]^{\frac{1}{p_{2}}} \right\rbrace .
   	\end{eqnarray*}
   	The proof is complete.
   \end{proof}
   \begin{remark}
   	If $ s=1 $ in Theorem \ref{some6}, then
   	\begin{eqnarray}\label{some8}
   	\left| \frac{\Gamma(1+2\alpha)}{2^{\alpha}}\varphi\left( \frac{y_{1}+y_{2}}{2}\right) -\frac{\Gamma(1+2\alpha)[\Gamma(1+\alpha)]^{2}}{2^{\alpha}(y_{2}-y_{1})^{\alpha}}\,\ _{y_{1}}I^{(\alpha)}_{y_{2}}\varphi(x)\right| \nonumber\\&&\hspace{-3.5in}\leq\frac{(y_{2}-y_{1})^{2\alpha}}{16^{\alpha}}\left[\frac{\Gamma(1+\alpha)}{\Gamma(1+2\alpha)} \right]^{\frac{1}{p_{2}}} \left[\frac{\Gamma(1+2p_{1}\alpha)}{\Gamma(1+(2p_{1}+1)\alpha)} \right]^{\frac{1}{p_{1}}} \nonumber\\&&\hspace{-3.5in}\times\left\lbrace \left[\left| \varphi^{(2\alpha)}\left( \frac{y_{1}+y_{2}}{2}\right) \right| ^{p_{2}}+\left| \varphi^{(2\alpha)}(y_{1})\right| ^{p_{2}} \right]^{\frac{1}{p_{2}}}\right. \nonumber\\&&\hspace{-3in}\left. + \left[\left| \varphi^{(2\alpha)}\left( \frac{y_{1}+y_{2}}{2}\right) \right| ^{p_{2}}+\left| \varphi^{(2\alpha)}(y_{2})\right| ^{p_{2}} \right]^{\frac{1}{p_{2}}} \right\rbrace.
   	\end{eqnarray}
   \end{remark}
   \begin{corollary}
   	Assume that $ \varphi: U\subset \left[\left.0,\infty \right)  \right.\longrightarrow \mathbb{R}^{\alpha}  $ such that $ \varphi\in D_{\alpha} $ on $ Int(U) $ and $ \varphi^{(2\alpha)}\in C_{\alpha}[y_{1},y_{2}] $, where $ y_{1},y_{2}\in U $ with $ y_{1}<y_{1} $. If $ |\varphi^{(2\alpha)}|^{p_{2}} $ is generalized $ s $-convex on $ [y_{1},y_{2}] $, for some fixed $ 0<s\leq 1 $ and $ p_{2}>1 $ with $ \frac{1}{p_{1}}+\frac{1}{p_{2}}=1 $, then the following inequality holds:
   	\begin{eqnarray*}
   		\left| \frac{\Gamma(1+2\alpha)}{2^{\alpha}}\varphi\left( \frac{y_{1}+y_{2}}{2}\right) -\frac{\Gamma(1+2\alpha)[\Gamma(1+\alpha)]^{2}}{2^{\alpha}(y_{2}-y_{1})^{\alpha}}\,\ _{y_{1}}I^{(\alpha)}_{y_{2}}\varphi(x)\right| \nonumber\\&&\hspace{-4.2in}\leq\frac{(y_{2}-y_{1})^{2\alpha}}{16^{\alpha}}\frac{\left[ \Gamma(1+s\alpha)\right]^{\frac{1}{p_{2}}} }{\left[ \Gamma(1+(s+1)\alpha)\right]^{\frac{2}{p_{2}}} }\left[ \frac{\Gamma(1+2p_{1}\alpha)}{\Gamma(1+(2p_{1}+1)\alpha)}\right]^{\frac{1}{p_{1}}}\nonumber\\&&\hspace{-4.2in}\times\left\lbrace \left[\left(2^{\alpha(1-s)}\Gamma(1+s\alpha)\Gamma(1+\alpha) +\Gamma(1+(s+1)\alpha) \right)^{\frac{1}{p_{2}}}   \right. \right. \nonumber\\&&\hspace{-4.2in}\left. \left. +2^{\frac{\alpha(1-s)}{p_{2}}}\left[ \Gamma(1+\alpha)\right]^{\frac{1}{p_{2}}}\left[ \Gamma(1+\alpha)\right]^{\frac{1}{p_{2}}} \right] \left[\left| \varphi^{(2\alpha)}(y_{1})\right| +\left| \varphi^{(2\alpha)}(y_{2})\right|  \right] \right\rbrace
   	\end{eqnarray*}
   \end{corollary}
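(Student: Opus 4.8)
The plan is to derive this directly from Theorem~\ref{some6} by replacing the midpoint term $\left|\varphi^{(2\alpha)}\left(\frac{y_{1}+y_{2}}{2}\right)\right|^{p_{2}}$ with an estimate in the endpoint values. Since $\left|\varphi^{(2\alpha)}\right|^{p_{2}}$ is generalized $s$-convex on $[y_{1},y_{2}]$, the generalized Hermite--Hadamard inequality (\ref{eq11}) applied to $\left|\varphi^{(2\alpha)}\right|^{p_{2}}$ (combining its left- and right-hand bounds, exactly as in the proof of Theorem~\ref{some2}) gives
\begin{equation*}
\left|\varphi^{(2\alpha)}\left(\frac{y_{1}+y_{2}}{2}\right)\right|^{p_{2}}\leq 2^{\alpha(1-s)}\frac{\Gamma(1+s\alpha)\Gamma(1+\alpha)}{\Gamma(1+(s+1)\alpha)}\left(\left|\varphi^{(2\alpha)}(y_{1})\right|^{p_{2}}+\left|\varphi^{(2\alpha)}(y_{2})\right|^{p_{2}}\right).
\end{equation*}
Writing $K=2^{\alpha(1-s)}\frac{\Gamma(1+s\alpha)\Gamma(1+\alpha)}{\Gamma(1+(s+1)\alpha)}$, $A=\left|\varphi^{(2\alpha)}(y_{1})\right|^{p_{2}}$ and $B=\left|\varphi^{(2\alpha)}(y_{2})\right|^{p_{2}}$, this reads $\left|\varphi^{(2\alpha)}\left(\frac{y_{1}+y_{2}}{2}\right)\right|^{p_{2}}\leq K(A+B)$.

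Next I would insert this into the two summands on the right-hand side of (\ref{some7}). The first becomes $\left(K(A+B)+A\right)^{1/p_{2}}=\left((K+1)A+KB\right)^{1/p_{2}}$ and the second $\left(KA+(K+1)B\right)^{1/p_{2}}$. Since $p_{2}>1$, the exponent $r=1/p_{2}$ obeys $0<r\leq 1$, so the elementary subadditivity inequality $(u+v)^{r}\leq u^{r}+v^{r}$ for nonnegative $u,v$ applies; splitting each summand and using $A^{1/p_{2}}=\left|\varphi^{(2\alpha)}(y_{1})\right|$, $B^{1/p_{2}}=\left|\varphi^{(2\alpha)}(y_{2})\right|$ yields
\begin{equation*}
\left((K+1)A+KB\right)^{1/p_{2}}+\left(KA+(K+1)B\right)^{1/p_{2}}\leq\left[(K+1)^{1/p_{2}}+K^{1/p_{2}}\right]\left(\left|\varphi^{(2\alpha)}(y_{1})\right|+\left|\varphi^{(2\alpha)}(y_{2})\right|\right).
\end{equation*}

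Finally I would substitute the explicit value of $K$ and absorb the prefactor $\left[\frac{\Gamma(1+s\alpha)}{\Gamma(1+(s+1)\alpha)}\right]^{1/p_{2}}$ from (\ref{some7}) into $(K+1)^{1/p_{2}}$ and $K^{1/p_{2}}$. Writing $K+1=\frac{2^{\alpha(1-s)}\Gamma(1+s\alpha)\Gamma(1+\alpha)+\Gamma(1+(s+1)\alpha)}{\Gamma(1+(s+1)\alpha)}$ produces the common denominator $[\Gamma(1+(s+1)\alpha)]^{2/p_{2}}$ together with the numerator factor $[\Gamma(1+s\alpha)]^{1/p_{2}}$, which is precisely the constant displayed out front in the stated inequality; the surviving bracketed terms are then the explicit Gamma-function forms of $(K+1)^{1/p_{2}}$ and $K^{1/p_{2}}$, reproducing the two summands inside the braces and the endpoint factor $\left|\varphi^{(2\alpha)}(y_{1})\right|+\left|\varphi^{(2\alpha)}(y_{2})\right|$. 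The main obstacle is exactly this last bookkeeping step: keeping the several $\Gamma$-quotients and the powers $2^{\alpha(1-s)/p_{2}}$ aligned so that the factored constant matches the displayed form, since the only genuine inequalities used are the Hermite--Hadamard bound for $\left|\varphi^{(2\alpha)}\right|^{p_{2}}$ and the subadditivity of $t\mapsto t^{1/p_{2}}$.
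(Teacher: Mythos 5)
Your proposal is correct and takes essentially the same route as the paper's own proof: the combined Hermite--Hadamard midpoint bound for $\left|\varphi^{(2\alpha)}\right|^{p_{2}}$ is inserted into (\ref{some7}), the two summands are split using the subadditivity of $t\mapsto t^{1/p_{2}}$ (the paper invokes this as $\sum_{i}(x_{i}+z_{i})^{\alpha n}\leq\sum_{i}x_{i}^{\alpha n}+\sum_{i}z_{i}^{\alpha n}$ for $0<n<1$), and the Gamma-quotients are regrouped over the common denominator $\left[\Gamma(1+(s+1)\alpha)\right]^{2/p_{2}}$. Your bookkeeping with $K^{1/p_{2}}$ in fact confirms that the term $2^{\frac{\alpha(1-s)}{p_{2}}}\left[\Gamma(1+\alpha)\right]^{\frac{1}{p_{2}}}\left[\Gamma(1+\alpha)\right]^{\frac{1}{p_{2}}}$ in the corollary as stated is a typo for $2^{\frac{\alpha(1-s)}{p_{2}}}\left[\Gamma(1+s\alpha)\right]^{\frac{1}{p_{2}}}\left[\Gamma(1+\alpha)\right]^{\frac{1}{p_{2}}}$, which is what both your derivation and the paper's displayed computation produce.
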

   \begin{proof}
   	Since $ \left| \varphi^{(2\alpha)}\right| ^{p_{2}} $ is generalized $ s $-convex, then
   	\begin{eqnarray*}
   		2^{\alpha(s-1)}\varphi^{(2\alpha)}\left( \frac{y_{1}+y_{2}}{2}\right) &\leq&\frac{\Gamma(1+s\alpha)\Gamma(1+\alpha)}{\Gamma(1+(s+1)\alpha)}\left(\varphi^{(2\alpha)}(y_{1})+\varphi^{(2\alpha)}(y_{2}) \right) .
   	\end{eqnarray*}
   	Hence using (\ref{some7}), we get
   	\begin{eqnarray*}
   		\left| \frac{\Gamma(1+2\alpha)}{2^{\alpha}}\varphi\left( \frac{y_{1}+y_{2}}{2}\right) -\frac{\Gamma(1+2\alpha)[\Gamma(1+\alpha)]^{2}}{2^{\alpha}(y_{2}-y_{1})^{\alpha}}\,\ _{y_{1}}I^{(\alpha)}_{y_{2}}\varphi(x)\right|
   		  \nonumber\\&&\hspace{-4.2in}\leq \frac{(y_{2}-y_{1})^{2\alpha}}{16^{\alpha}}\frac{\left[\Gamma(1+s\alpha) \right]^{\frac{1}{p_{2}}} }{\left[\Gamma(1+(s+1)\alpha) \right]^{\frac{2}{p_{2}}} }\left[\frac{\Gamma(1+2p_{1}\alpha)}{\Gamma(1+(2p_{1}+1)\alpha)} \right]^{\frac{1}{p_{1}}} \nonumber\\&&\hspace{-4.2in}\times\left\lbrace  \left[\left(2^{\alpha(1-s)}\Gamma(1+s\alpha)\Gamma(1+\alpha)+\Gamma(1+(s+1)\alpha) \right)|\varphi^{(2\alpha)}(y_{1})|^{p_{2}}\right. \right. \nonumber\\&&\hspace{-4.2in}\left. \left.  +2^{\alpha(1-s)}\Gamma(1+s\alpha)\Gamma(1+\alpha) |\varphi^{(2\alpha)}(y_{2})|^{p_{2}}\right]^{\frac{1}{p_{2}}} \right.\nonumber\\&&\hspace{-4.2in}\left.+\left[2^{\alpha(1-s)}\Gamma(1+s\alpha)\Gamma(1+\alpha) |\varphi^{(2\alpha)}(y_{1})|^{p_{2}}\right. \right. \nonumber\\&&\hspace{-4.2in}\left. \left. +\left(2^{\alpha(1-s)}\Gamma(1+s\alpha)\Gamma(1+\alpha)+\Gamma(1+(s+1)\alpha) \right)|\varphi^{(2\alpha)}(y_{2})|^{p_{2}} \right]^{\frac{1}{q}}  \right\rbrace
   	\end{eqnarray*}
   	and since $\displaystyle \sum_{i=1}^{k}(x_{i}+z_{i})^{\alpha n}\leq \displaystyle\sum_{i=1}^{k}x_{i}^{\alpha n}+\displaystyle\sum_{i=1}^{k}z_{i}^{\alpha n} $\\ for $ 0<n<1,x_{i},z_{i}\geq 0; \forall 1\leq i\leq k $, then we have
   	\begin{eqnarray*}
   		\left| \frac{\Gamma(1+2\alpha)}{2^{\alpha}}\varphi\left( \frac{y_{1}+y_{2}}{2}\right) -\frac{\Gamma(1+2\alpha)[\Gamma(1+\alpha)]^{2}}{2^{\alpha}(y_{2}-y_{1})^{\alpha}}\,\ _{y_{1}}I^{(\alpha)}_{y_{2}}\varphi(x)\right| \nonumber\\&&\hspace{-4.2in}\leq\frac{(y_{2}-y_{1})^{2\alpha}}{16^{\alpha}}\frac{\left[\Gamma(1+s\alpha) \right]^{\frac{1}{p_{2}}} }{\left[\Gamma(1+(s+1)\alpha) \right]^{\frac{2}{p_{2}}} }\left[\frac{\Gamma(1+2p_{1}\alpha)}{\Gamma(1+(2p_{1}+1)\alpha)} \right]^{\frac{1}{p_{1}}} \nonumber\\&&\hspace{-4.2in}\times\left\lbrace  \left[\left(2^{\alpha(1-s)}\Gamma(1+s\alpha)\Gamma(1+\alpha)+\Gamma(1+(s+1)\alpha) \right)^{\frac{1}{p_{2}}}\left| \varphi^{(2\alpha)}(y_{1})\right|\right. \right. \nonumber\\&&\hspace{-4.2in}\left. \left.   +2^{\frac{\alpha(1-s)}{p_{2}}}\left[ \Gamma(1+s\alpha)\right]^{\frac{1}{p_{2}}} \left[ \Gamma(1+\alpha)\right] ^{\frac{1}{p_{2}}} \left| \varphi^{(2\alpha)}(y_{2})\right| \right] \right.\nonumber\\&&\hspace{-4.2in}\left.+\left[2^{\frac{\alpha(1-s)}{p_{2}}}\left[ \Gamma(1+s\alpha)\right] ^{\frac{1}{p_{2}}}\left[ \Gamma(1+\alpha)\right] ^{\frac{1}{p_{2}}} \left| \varphi^{(2\alpha)}(y_{1})\right|\right. \right. \nonumber\\&&\hspace{-4.2in}\left. \left.  +\left(2^{\alpha(1-s)}\Gamma(1+s\alpha)\Gamma(1+\alpha)+\Gamma(1+(s+1)\alpha) \right)^{\frac{1}{p_{2}}}\left| \varphi^{(2\alpha)}(y_{2})\right|  \right] \right\rbrace
   	\end{eqnarray*}
   	where $ 0<\frac{1}{p_{2}}<1 $ for $ p_{2}>1 $.
   	By a simple calculation, we obtain the required result.
   \end{proof}
   Now, the generalized Hadamard's type inequality for generalized $ s $-concave functions.
   \begin{theorem}\label{some9}
   	Assume that $ \varphi: U\subset \left[\left.0,\infty \right)  \right.\longrightarrow \mathbb{R}^{\alpha}  $ such that $ \varphi\in D_{\alpha} $ on $ Int(U) $ and $ \varphi^{(2\alpha)}\in C_{\alpha}[y_{1},y_{2}] $, where $ y_{1},y_{2}\in U $ with $ y_{1}<y_{1} $. If $ |\varphi^{(2\alpha)}|^{p_{2}} $ is generalized $ s $-convex on $ [y_{1},y_{2}] $, for some fixed $ 0<s\leq 1 $ and $ p_{2}>1 $ with $ \frac{1}{p_{1}}+\frac{1}{p_{2}}=1 $, then the following inequality holds:
   	\begin{eqnarray*}
   		\left| \frac{\Gamma(1+2\alpha)}{2^{\alpha}}\varphi\left( \frac{y_{1}+y_{2}}{2}\right) -\frac{\Gamma(1+2\alpha)\left[\Gamma(1+\alpha) \right]^{2} }{2^{\alpha}(y_{2}-y_{1})^{\alpha}}\,\ _{y_{1}}I^{(\alpha)}_{y_{2}}\varphi(x)\right| \nonumber\\&&\hspace{-3.6in}\leq \frac{2^{\frac{\alpha(s-1)}{p_{2}}}(y_{2}-y_{1})^{2\alpha}}{16^{\alpha}\left( \Gamma(1+\alpha)\right) ^{\frac{1}{p_{2}}}}\left[\frac{\Gamma(1+2p_{1}\alpha)}{\Gamma(1+(2p_{1}+1)\alpha)} \right]^{\frac{1}{p_{1}}}\nonumber\\&&\hspace{-3.6in}\times \left[\left| \varphi^{(2\alpha)}\left( \frac{3y_{1}+y_{2}}{4}\right) \right| +\left| \varphi^{(2\alpha)}\left( \frac{y_{1}+3y_{2}}{4}\right) \right|  \right]
   	\end{eqnarray*}
   \end{theorem}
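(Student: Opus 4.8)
The plan is to reuse the opening moves from the proof of Theorem~\ref{some6} and then estimate the two resulting half--interval integrals by a \emph{midpoint} Hermite--Hadamard bound instead of the endpoint one. First I would invoke Lemma~\ref{some1}, which writes
\[
\frac{\Gamma(1+2\alpha)}{2^\alpha}\varphi\Bigl(\frac{y_1+y_2}{2}\Bigr)-\frac{\Gamma(1+2\alpha)[\Gamma(1+\alpha)]^2}{2^\alpha(y_2-y_1)^\alpha}\,\ _{y_1}I^{(\alpha)}_{y_2}\varphi(x)
\]
as $\frac{(y_2-y_1)^{2\alpha}}{16^\alpha}(B_1+B_2)$ with the two local fractional integrals $B_1,B_2$ of that lemma. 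Passing to absolute values and applying the generalized H\"older inequality with exponents $p_1,p_2$ to each of $B_1,B_2$ exactly as in Theorem~\ref{some6} splits off the factor $\bigl[\frac{\Gamma(1+2p_1\alpha)}{\Gamma(1+(2p_1+1)\alpha)}\bigr]^{1/p_1}$ (coming from $\,\ _{0}I^{(\alpha)}_{1}\gamma^{2p_1\alpha}$) and leaves the two $p_2$--integrals
\[
J_1=\,\ _{0}I^{(\alpha)}_{1}\Bigl|\varphi^{(2\alpha)}\bigl(\gamma\tfrac{y_1+y_2}{2}+(1-\gamma)y_1\bigr)\Bigr|^{p_2},\qquad J_2=\,\ _{0}I^{(\alpha)}_{1}\Bigl|\varphi^{(2\alpha)}\bigl(\gamma y_2+(1-\gamma)\tfrac{y_1+y_2}{2}\bigr)\Bigr|^{p_2}
\]
to be raised to the power $1/p_2$.

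Next I would undo the parametrization. The substitution $x=\gamma\tfrac{y_1+y_2}{2}+(1-\gamma)y_1$ turns $J_1$ into the local fractional integral of $|\varphi^{(2\alpha)}|^{p_2}$ over the left half $[y_1,\tfrac{y_1+y_2}{2}]$, up to the Jacobian factor $\bigl(\tfrac{y_2-y_1}{2}\bigr)^{-\alpha}$, and the analogous substitution writes $J_2$ as the integral of $|\varphi^{(2\alpha)}|^{p_2}$ over the right half $[\tfrac{y_1+y_2}{2},y_2]$. The decisive observation is that the midpoints of these two half--intervals are exactly the quarter points $\tfrac{3y_1+y_2}{4}$ and $\tfrac{y_1+3y_2}{4}$ appearing in the target bound.

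The last step is to control each half--interval average by its midpoint value through the left part of the generalized Hermite--Hadamard inequality~(\ref{eq11}) applied to $|\varphi^{(2\alpha)}|^{p_2}$, which yields
\[
J_1\le \frac{2^{\alpha(s-1)}}{\Gamma(1+\alpha)}\Bigl|\varphi^{(2\alpha)}\bigl(\tfrac{3y_1+y_2}{4}\bigr)\Bigr|^{p_2},\qquad J_2\le \frac{2^{\alpha(s-1)}}{\Gamma(1+\alpha)}\Bigl|\varphi^{(2\alpha)}\bigl(\tfrac{y_1+3y_2}{4}\bigr)\Bigr|^{p_2}.
\]
Taking $p_2$--th roots, reinserting the H\"older factor and the prefactor $\frac{(y_2-y_1)^{2\alpha}}{16^\alpha}$, and collecting the constants $2^{\alpha(s-1)/p_2}$ and $(\Gamma(1+\alpha))^{-1/p_2}$ reproduces the claimed inequality, the two root terms assembling into the bracket $\bigl[|\varphi^{(2\alpha)}(\tfrac{3y_1+y_2}{4})|+|\varphi^{(2\alpha)}(\tfrac{y_1+3y_2}{4})|\bigr]$.

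The hard part will be the orientation of this final midpoint estimate, and it is the only genuinely delicate point; the rest is the same routine $\Gamma$--function bookkeeping already carried out in Theorem~\ref{some6}. As literally written, inequality~(\ref{eq11}) reads $2^{\alpha(s-1)}h(\mathrm{mid})\le(\text{average})$ for a generalized $s$-convex integrand $h$, so it \emph{lower}--bounds $J_1,J_2$; to \emph{upper}--bound them by their quarter--point values, as the stated conclusion demands, one must use~(\ref{eq11}) in its reversed form, namely the midpoint estimate that holds for generalized $s$-concave integrands. Thus the argument hinges on invoking the hypothesis on $|\varphi^{(2\alpha)}|^{p_2}$ in precisely the direction compatible with this section's heading on generalized $s$-concave functions, and matching that direction to the one--sided (upper) estimate in the conclusion is exactly where the proof must be handled with care.
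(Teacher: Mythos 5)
Your proposal matches the paper's own proof step for step: Lemma~\ref{some1}, the generalized H\"older inequality with exponents $p_1,p_2$ isolating the factor $\bigl[\Gamma(1+2p_1\alpha)/\Gamma(1+(2p_1+1)\alpha)\bigr]^{1/p_1}$, and then the reversed (midpoint) Hermite--Hadamard estimate evaluated at the quarter points $\tfrac{3y_1+y_2}{4}$ and $\tfrac{y_1+3y_2}{4}$, which are exactly the paper's inequalities~(\ref{some10}) and~(\ref{some11}). You also correctly diagnosed the delicate point: the theorem's hypothesis says ``generalized $s$-convex'' but the proof (both yours and the paper's, whose text reads ``Since $|\varphi^{(2\alpha)}|^{p_2}$ is generalized $s$-concave'') genuinely requires $s$-concavity, so the stated hypothesis is a typo that your analysis rightly flags.
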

\newpage
   \begin{proof}
   	From Lemma \ref{some1} and using the generalized H\"{o}lder inequality for $ p_{2}>1 $ and $ \frac{1}{p_{1}}+\frac{1}{p_{2}}=1 $, we get
   	\begin{eqnarray*}
   		\left| \frac{\Gamma(1+2\alpha)}{2^{\alpha}}\varphi\left( \frac{y_{1}+y_{2}}{2}\right) -\frac{\Gamma(1+2\alpha)\left[\Gamma(1+\alpha) \right]^{2} }{2^{\alpha}(a_{2}-a_{1})^{\alpha}}\,\ _{y_{1}}I^{(\alpha)}_{y_{2}}\varphi(x)\right| \nonumber\\&&\hspace{-4.2in}\leq \frac{(y_{2}-y_{1})^{2\alpha}}{16^{\alpha}}\left[\,\ _{0}I^{(\alpha)}_{1}\gamma^{2\alpha}\left| \varphi^{(2\alpha)}\left( \gamma\frac{y_{1}+y_{2}}{2}+(1-\gamma)y_{1}\right) \right|  \right. \nonumber\\&&\hspace{-4.2in}+\left.\,\ _{0}I^{(\alpha)}_{1}(\gamma-1)^{2\alpha}\left| \varphi^{(2\alpha)}\left( \gamma y_{2}+(1-\gamma)\frac{y_{1}+y_{2}}{2}\right) \right|  \right] \nonumber\\&&\hspace{-4.2in}\leq \frac{(y_{2}-y_{1})^{2\alpha}}{16^{\alpha}}\left(\,\ _{0}I^{(\alpha)}_{1}\gamma^{2p_{1}\alpha} \right)^{\frac{1}{p_{1}}}\left(\,\ _{0}I^{(\alpha)}_{1}\left| \varphi^{(2\alpha)}\left( \gamma\frac{y_{1}+y_{2}}{2}+(1-\gamma)y_{1}\right) \right| ^{p_{2}} \right)^{\frac{1}{p_{2}}} \nonumber\\&&\hspace{-4.2in}+ \frac{(y_{2}-y_{1})^{2\alpha}}{16^{\alpha}}\left(\,\ _{0}I^{(\alpha)}_{1}(\gamma-1)^{2p_{1}\alpha} \right)^{\frac{1}{p_{1}}}\left(\,\ _{0}I^{(\alpha)}_{1}\left| \varphi^{(2\alpha)}\left( \gamma y_{2}+(1-\gamma)\frac{y_{1}+y_{2}}{2}\right) \right| ^{p_{2}} \right)^{\frac{1}{p_{2}}}.
   	\end{eqnarray*}
   	Since $ \left| \varphi^{(2\alpha)}\right| ^{p_{2}} $ is generalized $ s $-concave, then
   	\begin{eqnarray}\label{some10}
   	\,\ _{0}I^{(\alpha)}_{1}\left| \varphi^{(2\alpha)}\left( \gamma\frac{y_{1}+y_{2}}{2}+(1-\gamma)y_{1}\right) \right| ^{p_{2}}\leq \frac{2^{\alpha(s-1)}}{\Gamma(1+\alpha)}\left| \varphi^{(2\alpha)}\left( \frac{3y_{1}+y_{2}}{4}\right) \right| ^{p_{2}}
   	\end{eqnarray}
   	also
   	\begin{eqnarray}\label{some11}
   	\,\ _{0}I^{(\alpha)}_{1}\left| \varphi^{(2\alpha)}\left( \gamma y_{2}+(1-\gamma)\frac{y_{1}+y_{2}}{2}\right) \right| ^{p_{2}}\leq \frac{2^{\alpha(s-1)}}{ \Gamma(1+\alpha)}\left| \varphi^{(2\alpha)}\left( \frac{y_{1}+3y_{2}}{4}\right) \right| ^{p_{2}}.
   	\end{eqnarray}
   	From (\ref{some10}) and (\ref{some11}), we observe that
   	\begin{eqnarray*}
   		\left| \frac{\Gamma(1+2\alpha)}{2^{\alpha}}\varphi\left( \frac{y_{1}+y_{2}}{2}\right) -\frac{\Gamma(1+2\alpha)\left[\Gamma(1+\alpha) \right]^{2} }{2^{\alpha}(y_{2}-y_{1})^{\alpha}}\,\ _{y_{1}}I^{(\alpha)}_{y_{2}}\varphi(x)\right|
   		\nonumber\\&&\hspace{-4.2in}\leq \frac{(y_{2}-y_{1})^{2\alpha}}{16^{\alpha}} \left[\frac{\Gamma(1+2p_{1}\alpha)}{\Gamma(1+(2p_{1}+1)\alpha)} \right]^{\frac{1}{p_{1}}} \frac{2^{\frac{\alpha(s-1)}{p_{2}}}}{\left( \Gamma(1+\alpha)\right) ^{\frac{1}{p_{2}}}}\left| \varphi^{(2\alpha)}\left( \frac{3y_{1}+y_{2}}{4}\right) \right| 	\nonumber\\&&\hspace{-4.2in}+ \frac{(y_{2}-y_{1})^{2\alpha}}{16^{\alpha}} \left[\frac{\Gamma(1+2p_{1}\alpha)}{\Gamma(1+(2p_{1}+1)\alpha)} \right]^{\frac{1}{p_{1}}} \frac{2^{\frac{\alpha(s-1)}{p_{2}}}}{\left( \Gamma(1+\alpha)\right) ^{\frac{1}{p_{2}}}}\left| \varphi^{(2\alpha)}\left( \frac{y_{1}+3y_{2}}{4}\right) \right|
   		\nonumber\\&&\hspace{-4.2in}= \frac{2^{\frac{\alpha(s-1)}{p_{2}}}(y_{2}-y_{1})^{2\alpha}}{16^{\alpha}\left( \Gamma(1+\alpha)\right) ^{\frac{1}{p_{2}}}}\left[\frac{\Gamma(1+2p_{1}\alpha)}{\Gamma(1+(2p_{1}+1)\alpha)} \right]^{\frac{1}{p_{1}}} \left[\left| \varphi^{(2\alpha)}\left( \frac{3y_{1}+y_{2}}{4}\right) \right| +\left| \varphi^{(2\alpha)}\left( \frac{y_{1}+3y_{2}}{4}\right) \right|  \right]
   	\end{eqnarray*}
   	the proof is complete.
   \end{proof}
   \begin{remark}
   	\begin{enumerate}
   		\item If $ \alpha=1 $ in Theorem \ref{some9}, then
   		\begin{eqnarray*}
   			\left| \varphi\left( \frac{y_{1}+y_{2}}{2}\right) -\frac{1}{y_{2}-y_{1}}\int_{y_{1}}^{y_{2}}\varphi(x)dx\right| \nonumber\\&&\hspace{-3.2in} \leq\frac{2^{\frac{s-1}{q}}(y_{2}-y_{1})^{2}}{16}\left[\frac{1}{\Gamma(2p_{1}+1)} \right]^{\frac{1}{p_{1}}} \left[\left| \varphi''\left( \frac{3y_{1}+y_{2}}{4}\right) \right| +\left| \varphi''\left( \frac{y_{1}+3y_{2}}{4}\right) \right|  \right]
   		\end{eqnarray*}
   		\item If $ s=1 $ and $ \frac{1}{3}<\left[\frac{\Gamma(1+2p_{1}\alpha)}{\Gamma(1+(2p_{1}+1)\alpha)} \right]^{\frac{1}{p_{1}}} <1, p_{1}>1 $ in Theorem \ref{some9}, then
   		\begin{eqnarray*}
   			\left| \frac{\Gamma(1+2\alpha)}{2^{\alpha}}\varphi\left( \frac{y_{1}+y_{2}}{2}\right) -\frac{\Gamma(1+2\alpha)\left[\Gamma(1+\alpha) \right]^{2} }{2^{\alpha}(y_{2}-y_{1})^{\alpha}}\,\ _{y_{1}}I^{(\alpha)}_{y_{2}}\varphi(x)\right| \nonumber\\&&\hspace{-4in}\leq \frac{(y_{2}-y_{1})^{2\alpha}}{16^{\alpha}\left( \Gamma(1+\alpha)\right) ^{\frac{1}{p_{2}}}} \left[\left| \varphi^{(2\alpha)}\left( \frac{3y_{1}+y_{2}}{4}\right) \right| +\left| \varphi^{(2\alpha)}\left( \frac{y_{1}+3y_{2}}{4}\right) \right|  \right]
   		\end{eqnarray*}
   	\end{enumerate}
   \end{remark}
   \section{Applications to special means}
   As in \cite{Pecaric2000inequalities}, some generalized means are considered such as :\\
   $ A(y_{1},y_{2})=\frac{y_{1}^{\alpha}+y_{2}^{\alpha}}{2^{\alpha}} ,y_{1},y_{2}\geq 0;$\\
   $ L_{n}(y_{1},y_{2})=\left[\frac{\Gamma(1+n\alpha)}{\Gamma(1+(n+1)\alpha)}\left(y_{2}^{(n+1)\alpha} -y_{1}^{(n+1)\alpha}\right)  \right]^{\frac{1}{n}} , n\in \mathbb{Z}\{-1,0\},y_{1},y_{2}\in\mathbb{R}, y_{1}\neq y_{2}  $.\\

   In \cite{HuixiaSui2014}, the following example was given:\\
   let $ 0<s<1 $ and $ y_{1}^{\alpha},y_{2}^{\alpha},y_{3}^{\alpha}\in \mathbb{R}^{\alpha} $. Defining for $ x\in \mathbb{R}_{+} $,
   $$\varphi(b)=\begin{cases}
   y_{1}^{\alpha}& b=0,\\ y_{2}^{\alpha}b^{s\alpha}+y_{3}^{\alpha} & b>0.
   \end{cases}$$
   If $y_{2}^{\alpha}\geq 0^{\alpha} $ and $ 0^{\alpha}\leq y_{3}^{\alpha}\leq y_{1}^{\alpha} $, then $ \varphi\in GK^{2}_{s} $.\\
   \begin{proposition}
   	Let $ 0<y_{1}<y_{2} $ and $ s\in(0,1) $. Then
   	\begin{eqnarray*}
   		\left| \frac{\Gamma(1+2\alpha)}{2^{\alpha}}A^{s}(y_{1},y_{2})-\frac{\Gamma(1+2\alpha)\left[\Gamma(1+\alpha) \right]^{2} }{2^{\alpha}(y_{2}-y_{1})^{\alpha}}L^{s}_{s}(y_{1},y_{2})\right| \nonumber\\&&\hspace{-4in}\leq \frac{(y_{2}-y_{1})^{2\alpha}}{16^{\alpha}}\left| \frac{\Gamma(1+s\alpha)}{\Gamma(1+(s-2)\alpha)}\right| \left\lbrace\frac{2^{\alpha}\Gamma(1+3\alpha)\left[\Gamma(1+\alpha) \right]^{2} }{\Gamma(1+4\alpha)\Gamma(1+2\alpha)} \right. \nonumber\\&&\hspace{-3.8in}\left. + \frac{\Gamma(1+\alpha)}{\Gamma(1+2\alpha)}-\frac{2^{\alpha}\Gamma(1+2\alpha)}{\Gamma(1+3\alpha)}+\frac{\Gamma(1+3\alpha)}{\Gamma(1+4\alpha)}\right\rbrace \left[|y_{1}|^{(s-2)\alpha}+|y_{2}|^{(s-2)\alpha} \right]
   	\end{eqnarray*}
   \end{proposition}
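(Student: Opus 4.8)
The plan is to apply the $s=1$ specialization of Theorem \ref{some2}, namely inequality (\ref{some5}), to the fractal test function $\varphi(x)=x^{s\alpha}$ on $[y_1,y_2]$, and then to recognize each resulting quantity as one of the generalized means $A$ and $L_s$. First I would record the local fractional differentiation and integration rules for fractal powers: for $\varphi(x)=x^{s\alpha}$ one has
\begin{equation*}
\varphi^{(\alpha)}(x)=\frac{\Gamma(1+s\alpha)}{\Gamma(1+(s-1)\alpha)}x^{(s-1)\alpha},\qquad
\varphi^{(2\alpha)}(x)=\frac{\Gamma(1+s\alpha)}{\Gamma(1+(s-2)\alpha)}x^{(s-2)\alpha},
\end{equation*}
together with
\begin{equation*}
{}_{y_1}I^{(\alpha)}_{y_2}\varphi(x)=\frac{\Gamma(1+s\alpha)}{\Gamma(1+(s+1)\alpha)}\bigl(y_2^{(s+1)\alpha}-y_1^{(s+1)\alpha}\bigr)=L_s^{s}(y_1,y_2),
\end{equation*}
the last equality being exactly the definition of $L_s$. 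Identifying likewise $\varphi\bigl(\tfrac{y_1+y_2}{2}\bigr)$ with $A^{s}(y_1,y_2)$, the left-hand side of (\ref{some5}) becomes verbatim the left-hand side of the proposition.

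Before invoking (\ref{some5}) I must verify its hypothesis, namely that $|\varphi^{(2\alpha)}|$ is generalized convex on $[y_1,y_2]$. Since $0<s<1$ forces the exponent $(s-2)<0$, the fractal power $x^{(s-2)\alpha}$ is generalized convex on $[y_1,y_2]\subset(0,\infty)$ (the local fractional analogue of the classical fact that $x^{p}$ with $p<0$ is convex on the positive axis), and multiplying by the positive constant $\bigl|\tfrac{\Gamma(1+s\alpha)}{\Gamma(1+(s-2)\alpha)}\bigr|$ preserves generalized convexity. This is where the absolute value on the Gamma ratio enters: $1+(s-2)\alpha$ may be negative, so $\Gamma(1+(s-2)\alpha)$ may carry a sign. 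Evaluating at the endpoints then gives
\begin{equation*}
\bigl|\varphi^{(2\alpha)}(y_1)\bigr|+\bigl|\varphi^{(2\alpha)}(y_2)\bigr|
=\Bigl|\tfrac{\Gamma(1+s\alpha)}{\Gamma(1+(s-2)\alpha)}\Bigr|\bigl(|y_1|^{(s-2)\alpha}+|y_2|^{(s-2)\alpha}\bigr),
\end{equation*}
which is precisely the endpoint factor on the right of the proposition.

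It then remains to substitute these three computations into (\ref{some5}). The constant in braces appearing there, namely
\begin{equation*}
\frac{2^{\alpha}\Gamma(1+3\alpha)}{\Gamma(1+4\alpha)}\frac{[\Gamma(1+\alpha)]^{2}}{\Gamma(1+2\alpha)}+\frac{\Gamma(1+\alpha)}{\Gamma(1+2\alpha)}-\frac{2^{\alpha}\Gamma(1+2\alpha)}{\Gamma(1+3\alpha)}+\frac{\Gamma(1+3\alpha)}{\Gamma(1+4\alpha)},
\end{equation*}
is independent of the test function and carries over unchanged, while the factor $\bigl|\tfrac{\Gamma(1+s\alpha)}{\Gamma(1+(s-2)\alpha)}\bigr|$ pulls out of the endpoint bracket. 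Collecting the terms reproduces exactly the stated inequality.

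The step I expect to be the main obstacle is the verification that $|\varphi^{(2\alpha)}|$ is genuinely generalized convex on $[y_1,y_2]$: one must confirm that the fractal negative-power function satisfies the defining inequality (\ref{GC1}) in the local fractional setting, and carry out the sign bookkeeping for $\Gamma(1+(s-2)\alpha)$ that forces the absolute value. By contrast, the differentiation, integration, and mean-identification steps are routine applications of the fractal power rules, and the final substitution is purely algebraic.
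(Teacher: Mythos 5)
Your proposal is correct and takes essentially the same route as the paper: the paper's proof likewise consists of applying the $s=1$ specialization of Theorem \ref{some2} (inequality (\ref{some5})) to $\varphi(x)=x^{s\alpha}$, identifying $\,_{y_{1}}I^{(\alpha)}_{y_{2}}\varphi$ with $L^{s}_{s}(y_{1},y_{2})$ and the midpoint value with $A^{s}(y_{1},y_{2})$. If anything, you are more careful than the paper, which states the substitution in one line and omits the computation of $\varphi^{(2\alpha)}$, the convexity verification for $|\varphi^{(2\alpha)}|$, and the sign bookkeeping for $\Gamma(1+(s-2)\alpha)$ that you correctly flag as the source of the absolute value.
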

   \begin{proof}
   	The result follows from Remark \ref{some5} (2) with $ \varphi:[0,1]\longrightarrow [0^{\alpha},1^{\alpha}] $, $ \varphi(x)=x^{s\alpha} $. and when $ \alpha=1 $, we have the following inequalitly:
   	\begin{eqnarray}\label{some12}
   	\left| A^{s}(y_{1},y_{2})-\frac{1}{y_{2}-y_{1}}L^{s}_{s}(y_{1},y_{2})\right| \leq \frac{(y_{2}-y_{1})^{2}|s(s-1)|}{48}\left\lbrace|y_{1}|^{s-2}+|y_{2}|^{s-2} \right\rbrace.
   	\end{eqnarray}
   \end{proof}
   \begin{proposition}
   	Let $ 0<y_{1}<y_{2} $ and $ s\in(0,1) $. Then
   	\begin{eqnarray*}
   		\left| \frac{\Gamma(1+2\alpha)}{2^{\alpha}}A^{s}(y_{1},y_{2})-\frac{\Gamma(1+2\alpha)\left[\Gamma(1+\alpha) \right]^{2} }{2^{\alpha}(y_{2}-y_{1})^{\alpha}}L^{s}_{s}(y_{1},y_{2})\right| \nonumber\\&&\hspace{-4in}\leq \frac{(y_{2}-y_{1})^{2\alpha}}{16^{\alpha}}\left[\frac{\Gamma(1+\alpha)}{\Gamma(1+2\alpha)} \right]^{\frac{1}{p_{2}}} \left| \frac{\Gamma(1+s\alpha)}{\Gamma(1+(s-2)\alpha)}\right| \left[ \frac{\Gamma(1+2p_{1}\alpha)}{\Gamma(1+(2p_{1}+1)\alpha)} \right] ^{\frac{1}{p_{1}}} \nonumber\\&&\hspace{-4in}\times\left[ \left(\left| \frac{y_{1}+y_{2}}{2}\right| ^{(s-2)p_{2}\alpha} +|y_{1}|^{(s-2)p_{2}\alpha}\right)^{\frac{1}{p_{2}}} +\left(\left| \frac{y_{1}+y_{2}}{2}\right| ^{(s-2)p_{2}\alpha} +|y_{2}|^{(s-2)p_{2}\alpha}\right)^{\frac{1}{p_{2}}} \right] ,
   	\end{eqnarray*}	
   	where $ p_{2}>1 $ and $ \frac{1}{p_{1}} +\frac{1}{p_{2}}=1$.
   \end{proposition}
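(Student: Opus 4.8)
The plan is to specialize the $ p $-norm Hermite--Hadamard estimate of Theorem \ref{some6}, in its $ s=1 $ reduction recorded as (\ref{some8}), to the single test function $ \varphi(x)=x^{s\alpha} $ on $ [y_{1},y_{2}] $, and then to recognize the two generalized means inside the resulting midpoint and integral terms. First I would compute the local fractional derivatives of $ \varphi $: applying the power rule $ \frac{d^{\alpha}}{dx^{\alpha}}x^{k\alpha}=\frac{\Gamma(1+k\alpha)}{\Gamma(1+(k-1)\alpha)}x^{(k-1)\alpha} $ twice gives
\[
\varphi^{(2\alpha)}(x)=\frac{\Gamma(1+s\alpha)}{\Gamma(1+(s-2)\alpha)}\,x^{(s-2)\alpha},
\]
so that $ |\varphi^{(2\alpha)}(x)| $ is a fixed constant times the power $ x^{(s-2)\alpha} $.

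Next I would check the hypothesis under which (\ref{some8}) is valid, namely that $ |\varphi^{(2\alpha)}|^{p_{2}} $ is generalized convex (the $ s=1 $ instance of generalized $ s $-convexity) on $ [y_{1},y_{2}] $. Since $ s\in(0,1) $ forces $ s-2<0 $, this map is a positive constant times the negative power $ x^{(s-2)p_{2}\alpha} $, so what must be argued is that such a power is generalized convex on the positive interval $ [y_{1},y_{2}] $. I expect this convexity check to be the only genuine obstacle: in the classical case negative real powers are convex on $ (0,\infty) $, and the work is to confirm the analogue in the local fractional (fractal) sense and to verify that raising to the power $ p_{2} $ preserves it. Granting this, (\ref{some8}) applies to $ \varphi $ directly.

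I would then match the left-hand side of (\ref{some8}) with that of the Proposition. The midpoint value $ \varphi\!\left(\frac{y_{1}+y_{2}}{2}\right)=\left(\frac{y_{1}+y_{2}}{2}\right)^{s\alpha} $ is the quantity denoted $ A^{s}(y_{1},y_{2}) $, while the local fractional integral evaluates to
\[
{}_{y_{1}}I^{(\alpha)}_{y_{2}}x^{s\alpha}=\frac{\Gamma(1+s\alpha)}{\Gamma(1+(s+1)\alpha)}\bigl(y_{2}^{(s+1)\alpha}-y_{1}^{(s+1)\alpha}\bigr),
\]
which is exactly $ L^{s}_{s}(y_{1},y_{2}) $ by the definition of the generalized logarithmic mean stated at the start of the section. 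Hence the left-hand side of (\ref{some8}) coincides term by term with that of the Proposition.

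Finally I would substitute the three values $ |\varphi^{(2\alpha)}(\tfrac{y_{1}+y_{2}}{2})| $, $ |\varphi^{(2\alpha)}(y_{1})| $ and $ |\varphi^{(2\alpha)}(y_{2})| $ into the right-hand side of (\ref{some8}). Each carries the common factor $ \left|\frac{\Gamma(1+s\alpha)}{\Gamma(1+(s-2)\alpha)}\right| $, which pulls out of every $ p_{2} $-th root and leaves inside the two brackets the sums $ \left|\frac{y_{1}+y_{2}}{2}\right|^{(s-2)p_{2}\alpha}+|y_{1}|^{(s-2)p_{2}\alpha} $ and $ \left|\frac{y_{1}+y_{2}}{2}\right|^{(s-2)p_{2}\alpha}+|y_{2}|^{(s-2)p_{2}\alpha} $. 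Collecting this common factor together with the gamma prefactors already present in (\ref{some8}) then yields precisely the claimed bound, completing the proof.
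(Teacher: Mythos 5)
Your proposal matches the paper's own proof, which is exactly the one-line specialization of the $s=1$ case (\ref{some8}) of Theorem \ref{some6} to $\varphi(x)=x^{s\alpha}$, with the midpoint value read as $A^{s}(y_{1},y_{2})$ and the local fractional integral $\,_{y_{1}}I^{(\alpha)}_{y_{2}}x^{s\alpha}$ read as $L^{s}_{s}(y_{1},y_{2})$. You are in fact more careful than the paper, which silently omits both the twofold power-rule computation of $\varphi^{(2\alpha)}$ and the verification that $|\varphi^{(2\alpha)}|^{p_{2}}$, a constant times the negative power $x^{(s-2)p_{2}\alpha}$, is generalized convex on $[y_{1},y_{2}]\subset(0,\infty)$ — the hypothesis you correctly flag as the only real thing to check.
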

   \begin{proof}
   	The result follows (\ref{some8}) with  $ \varphi:[0,1]\longrightarrow [0^{\alpha},1^{\alpha}] $, $ \varphi(x)=x^{s\alpha} $, and when $ \alpha=1 $, we have the following inequalitly:
   	\begin{eqnarray}\label{some13}
   	\left| A^{s}(y_{1},y_{2})-\frac{1}{y_{2}-y_{1}}L^{s}_{s}(y_{1},y_{2})\right| \nonumber\\&&\hspace{-2in}\leq \frac{(y_{2}-y_{1})^{2}|s(s-1)|}{2^{\frac{1}{p_{2}}}16(2p_{1}+1)^{\frac{1}{p_{1}}}}\left\lbrace\left( \left| \frac{y_{1}+y_{2}}{2}\right| ^{(s-2)p_{2}}+|y_{1}|^{(s-2)p_{2}}\right) ^{\frac{1}{p_{2}}}\right. \nonumber\\&&\hspace{-1in}\left. +\left( \left| \frac{y_{1}+y_{2}}{2}\right| ^{(s-2)p_{2}}+|y_{2}|^{(s-2)p_{2}}\right) ^{\frac{1}{p_{2}}} \right\rbrace.
   	\end{eqnarray}
   \end{proof}
   Where $ A(y_{1},y_{2}) $ and $ L_{n}(y_{1},y_{2}) $ in (\ref{some12}) and (\ref{some13}) are known as
   \begin{enumerate}
   	\item Arithmetic mean:
   	$$ A(y_{1},y_{2})=\dfrac{y_{1}+y_{2}}{2}, y_{1},y_{2}\in \mathbb{R}^{+} ;$$
   	\item Logarithmic mean :
   	$$ L(y_{1},y_{2})=\dfrac{y_{1}-y_{2}}{\ln|y_{1}|-\ln|y_{2}|},|y_{1}|\neq y_{2},y_{1},a_{2}\neq 0, y_{1},y_{2}\in \mathbb{R}^{+}; $$
   	Generalized Log-mean:
   	$$ L_{n}(y_{1},y_{2})=\left[ \dfrac{y_{2}^{n+1}-y_{1}^{n+1}}{(n+1)(y_{2}-y_{1})}\right]^{\frac{1}{n}} , n\in \mathbb{Z}\setminus \left\lbrace -1,0\right\rbrace ,y_{1},y_{2}\in \mathbb{R}^{+}. $$
   \end{enumerate}
   Now, we give application to wave equation on Cantor sets:\\
   the wave equation on Cantor sets (local fractional wave equation) was given by \cite{Yang2012}
   \begin{equation}\label{W1}
   \frac{\partial^{2\alpha}f(x,t)}{\partial t^{2\alpha}}=A^{2\alpha}\frac{\partial^{2\alpha}f(x,t)}{\partial x^{2\alpha}}
   \end{equation}
   Following (\ref{W1}), a wave equation on Cantor sets was proposed as follows \cite{Yang2013Local}:
   \begin{equation}\label{W2}
   \frac{\partial^{2\alpha}f(x,t)}{\partial t^{2\alpha}}=\frac{x^{2\alpha}}{\Gamma(1+2\alpha)}\frac{\partial^{2\alpha}f(x,t)}{\partial x^{2\alpha}},\ \ \ 0\leq\alpha\leq 1
   \end{equation}
   where $ f(x,t) $ is a fractal wave function and the initial value is given by  $  f(x,0)=\dfrac{x^{\alpha}}{\Gamma(1+\alpha)} $. The solution of (\ref{W2}) is given as  $$   f(x,t)=\frac{x^{\alpha}}{\Gamma(1+\alpha)}+\frac{t^{2\alpha}}{\Gamma(1+2\alpha)}. $$
   By using (\ref{some1}), we have
   \begin{eqnarray*}
   	 \frac{\Gamma(1+2\alpha)\Gamma(1+\alpha)}{2^{\alpha}(y_{2}-y_{1})^{\alpha}}\int_{y_{1}}^{y_{2}}f(x,t)(dt)^{\alpha}-\frac{\Gamma(1+2\alpha)}{2^{\alpha}}f\left( x,\frac{y_{1}+y_{2}}{2}\right)&&\nonumber\\ = \frac{(y_{2}-y_{1})^{\alpha}}{8^{\alpha}\Gamma(1+2\alpha)}\left[\left(\frac{2}{y_{2}-y_{1}} \right)^{2\alpha} \,\ _{y_{1}}I^{(\alpha)}_{\frac{y_{2}+y_{1}}{2}} (t-y_{1})^{2\alpha}x^{2\alpha}\frac{\partial^{2\alpha}f(x,t) }{\partial x^{\alpha}}\right. \nonumber\\ \left. +\,\ _{y_{1}}I^{(\alpha)}_{\frac{y_{2}+y_{1}}{2}} \left( \frac{2(t-y_{1})}{y_{2}-y_{1}}-1\right)^{2\alpha} x^{2\alpha}\frac{\partial ^{2\alpha}f(x,t)}{\partial x^{\alpha}} \right]
   \end{eqnarray*}

\end{document}